\newtheorem{thm}{Theorem}[section]
\newtheorem{lem}[thm]{Lemma}
\newtheorem{cor}[thm]{Corollary}
\newtheorem{prop}[thm]{Proposition}
\theoremstyle{remark}
\newtheorem{rmk}[thm]{Remark}
\begin{document}

\title{Thomae's Derivative Formulae for Trigonal Curves}

\author{Victor Enolski}
\author{Yaacov Kopeliovich}
\author{Shaul Zemel}


\maketitle

\section*{Introduction}

The theory of Thomae formulae started in the 19th century with \cite{[T1]} and \cite{[T2]}, where a formula generalizing the famous $\lambda$ function for hyper-elliptic curves (see \cite{[FK]} for more details about this function) was found. The formula involved a relation between theta constants evaluated at points of order 2 on the Jacobian of the hyper-elliptic curve and certain polynomials that arise naturally in the representation theory of symmetric groups. This formula should be viewed not only as the generalization of the elliptic case of genus 1, but also as a first step towards Riemann's program of finding moduli of algebraic curves through transcendental functions of period matrices.

Apart from its inherent mystery, Thomae's formula has numerous applications. Among others we mention expressing solutions of polynomial equations through theta functions, applications to cryptography (Mestre's algorithm of counting points of finite fields for hyper-elliptic curves), and deep connections to physics (especially string theory).

One can therefore ask whether Thomae's formula has a generalization to other types of curves. The most natural class of curves to investigate are the cyclic covers of $\mathbb{CP}^{1}$. The first progress was made in \cite{[BR1]} and \cite{[BR2]}, which used conformal field theory to write a formula generalizing the one of Thomae for essentially any cyclic cover. \cite{[N]} proved the formula for cyclic curves having a defining equation with a non-singular affine model (such an equation must look like $w^{n}=\prod_{i=1}^{rn}(z-\lambda_{i})$, up to branching over $\infty$). The next step was done in \cite{[EG]}, which generalized the results of \cite{[N]} (with a similar method) to curves arising from equations of the type $w^{n}=\prod_{i=1}^{k}(z-\lambda_{i})\prod_{i=1}^{k}(z-\mu_{i})^{n-1}$. Following these methods, the second author was able to derive a general formula for any fully branch cyclic cover in \cite{[Ko]}.

In another direction, a more elementary approach to theta quotients was developed in \cite{[EiF]}, and later extended in \cite{[EbF]} to the case considered in \cite{[N]}. The book \cite{[FZ]} (co-authored by the third author of this paper) gives a detailed account of this method, and extends it to several other cases. The general fully ramified cyclic case was then carried out by the third author in \cite{[Z]}. The second and third author were then able, in \cite{[KZ2]}, to state and prove a Thomae formula for a general Abelian cover of $\mathbb{CP}^{1}$, based on some machinery that was established in the prequel \cite{[KZ1]}. We also mention, in degrees 2 and 3, some explicit constructions and a proof for the formula given in \cite{[MT]}, using a variational principle from \cite{[F1]} .

Thomae's paper contains another remarkable formula, which expresses derivatives of non-singular odd theta constants of a hyper-elliptic curve $X$ of genus $g$ with a canonical homology basis $\{a_{j}\}_{j=1}^{g}$ and $\{b_{j}\}_{j=1}^{g}$ in terms of the branch points and the $a$-periods of holomorphic differentials. This \emph{Thomae derivative formula} is the topic of the present paper. Let us explain the formula for a hyper-elliptic curve $X$ whose defining function is ramified over $\infty$. Denote by $v_{s}$, $1 \leq s \leq g$ the basis for the holomorphic differentials that is dual to the $a$-part of our homology basis, set $P_{\infty}$ to be the branch point lying over $\infty$, and denote by $u_{P_{\infty}}$ the Abel--Jacobi map with base point $P_{\infty}$. Take $\mathcal{I}_{1}$ to be a set of $g-1$ branch points that does not contain $P_{\infty}$, with complement $\mathcal{J}_{1}$, and let $e(\mathcal{I}_{1})$ be the non-singular odd half-period associated with $\mathcal{I}_{1}$. Denote by $C=(C_{ls})_{l,s}$ the $g \times g$ matrix of the $a$-periods of the polynomial basis for the holomorphic differentials on $X$, and let $\Delta(\mathcal{I}_{1})$ and $\Delta(\mathcal{J}_{1})$ be the determinants of the corresponding Vandermonde matrices. We then get, for every $1 \leq s \leq g$, the formula
\[\frac{\partial}{\partial v_{s}}\theta[e(\mathcal{I}_1)]=\epsilon_{\mathcal{I}_{1}}\sqrt{\frac{\det C}{2^{g+2}\pi^{g}}}\Delta(\mathcal{I}_{1})^{1/4}\Delta(\mathcal{J}_{1})^{1/4}\sum_{l=1}^{g}C_{ls}\sigma_{g-l}(\mathcal{I}_{1}),\]
where $\sigma_{p}(\mathcal{I}_{1})$ denotes the elementary symmetric function of degree $p$ in the entries of $\mathcal{I}_{1}$, and $\epsilon_{\mathcal{I}_{1}}$ is some 8th root of unity.

Thomae's derivative formulae has not attracted much research attention, even though it has some evident applications. For example, inverting these relations yields expressions for the normalization constants of the holomorphic differentials, or for the $a$-periods of holomorphic differentials, in terms of theta constants. These formulae generalize Jacobi's relation $K=\pi\vartheta_{3}(0)^{2}/2$ for elliptic curves, mentioned in, e.g., \cite{[R]}. The study of similar relations in higher genera of hyper-elliptic curves started only recently in \cite{[ER]} and \cite{[E]}.

The formula above should also be useful in the framework of the completely integrable PDE of Korteweg-de Vries, as well as for expressions of ``winding vectors'', whose components are the above mentioned normalizing constants according to Novikov's program of the ``effectivization of the finite gap integration formulae'', that was intensely discussed during the 1970s and the 1980's (see, e.g., \cite{[D]}).

In this paper we generalize the Thomae derivative formula to the trigonal curves of the form $w^{3}=\prod_{i=1}^{3k}(z-\lambda_{i})$. As far as we are aware, this is the first generalization of such formulas since the original texts \cite{[T1]} and \cite{[T2]}. It is evident to us that these formulae have several applications and generalizations, and we plan to pursue the search for such formulae for a wider class of Riemann surfaces in subsequent publications.

\smallskip

This paper is divided into 3 sections. In Section \ref{HypEll} we illustrate our method by proving the Thomae derivative formula in the hyper-elliptic case. Section \ref{Deg3Basic} provides the basic data required for our trigonal curves, while Section \ref{ThomaeDeg3} proves the required derivative formula in this case.

\smallskip

We are thankful to the referee for a careful reading of our paper, for correcting several typos in the initial version, and for pointing out Remark \ref{q1ell} to us.

\section{Thomae Derivative formula for Hyper-Elliptic Curves \label{HypEll}}

The seminal paper \cite{[T2]} is mostly familiar due to the formulae relating branch points to the even theta constants on the genus $g$ hyper-elliptic curve $X$. However, this paper also contains a formula for non-singular odd theta derivatives. In this section we reprove this formula, in order to present the basic ideas that will be later employed to the other cases.

Let $X$ be the projective curve associated with the formula
\begin{equation}
w^{2}=f(z)=\prod_{i=1}^{2g+1}(z-\lambda_{i}),\mathrm{\ which\ is\ a\ compact\ Riemann\ surface\ of\ genus\ }g \label{curvehyperel}
\end{equation}
(and it is branched over $\infty$). Take some canonical homology basis, say $\{a_{j}\}_{j=1}^{g}$ and $\{b_{j}\}_{j=1}^{g}$, on $X$, and note that the holomorphic differentials on $X$ are naturally spanned by $\frac{z^{l-1}}{w}\mathrm{d}z$ with $1 \leq l \leq g$. On the other hand, if $v_{s}$, $1 \leq s \leq g$ is the dual basis for the holomorphic differentials (i.e., so that $\oint_{a_{j}}v_{s}=\delta_{js}$) then we define
\begin{equation}
\tau\in\mathrm{M}_{g}(\mathbb{C})\mathrm{\ having\ the\ }sj\mathrm{\ entry\ }\oint_{b_{j}}v_{s},\mathrm{\ as\ well\ as\ }C\in\mathrm{M}_{g}(\mathbb{C})\mathrm{\ with\ }C_{lj}=\oint_{a_{j}}\tfrac{z^{l-1}}{w}\mathrm{d}z. \label{matrices}
\end{equation}
The (Riemann) matrix $\tau$ lies in the Siegel upper half-space $\mathcal{H}_{g}$ of degree $g$ (since it is symmetric and its imaginary part is positive definite), and the matrix $C$ can also be seen as the transition matrix from the basis $\{v_{s}\}_{s=1}^{g}$ to the basis $\big\{\frac{z^{l-1}}{w}\mathrm{d}z\big\}_{l=1}^{g}$.

Choosing the homology basis identifies the Jacobian variety $J(X)$ of $X$ with the complex torus $\mathbb{C}^{g}/\mathbb{Z}^{g}\oplus\tau\mathbb{Z}^{g}$, via the Abel-Jacobi map $u_{P_{0}}$ with some fixed base point $P_{0}$, which we assume to be a branch point of the map $z:X\to\mathbb{CP}^{1}$.

Now, any point $\zeta\in\mathbb{C}^{g}$ can be represented as $\tau\frac{\varepsilon}{2}+I\frac{\delta}{2}$ with $\varepsilon$ and $\delta$ from $\mathbb{R}^{g}$, and for the image in $J(X)$ one simply takes the images of $\varepsilon$ and $\delta$ modulo $2\mathbb{Z}^{g}$. The vectors $\varepsilon$ and $\delta$ combine to a $2 \times g$ matrix, which is called the \emph{characteristic} $e$ of the point $\zeta$. This point is a half-period if and only if all the entries of its characteristic are integral.

\smallskip

Given any characteristic $\big[{\varepsilon \atop \delta}\big]$, one defines the associated \emph{Riemann theta function} by \[\textstyle{\theta\big[{\varepsilon \atop \delta}\big](\zeta,\tau)=\sum_{m\in\mathbb{Z}^{g}}\mathbf{e}\big[\big(m+\tfrac{\varepsilon}{2}\big)^{t}\tfrac{\tau}{2}\big(m+\tfrac{\varepsilon}{2}\big)
+\big(m+\tfrac{\varepsilon}{2}\big)^{t}\big(\zeta+\tfrac{\delta}{2}\big)\big]}\mathrm{\ for\ }\zeta\in\mathbb{C}^{g}\mathrm{\ and\ }\tau\in\mathcal{H}_{g},\] where here and throughout $\mathbf{e}(\alpha)$ stands for $e^{2\pi\imath\alpha}$ for every complex $\alpha$. It is related to the theta function $\theta=\theta\big[{0 \atop 0}\big]$ by the equality
\begin{equation}
\textstyle{\theta\big[{\varepsilon \atop \delta}\big](\zeta,\tau)=\mathbf{e}\big[\tfrac{\varepsilon^{t}\tau\varepsilon}{8}+\tfrac{\varepsilon^{t}}{2}\big(\zeta+\tfrac{\delta}{2}\big)\big]
\theta\big(\zeta+\tau\tfrac{\varepsilon^{t}}{2}+\tfrac{\delta}{2},\tau\big)}, \label{transchar}
\end{equation}
and it possesses the periodicity property
\begin{equation}
\textstyle{\theta\big[{\varepsilon \atop \delta}\big](\zeta+\tau n+l,\tau)=\mathbf{e}\big(-\tfrac{n^{t}\tau n}{2}-n^{t}\zeta+\tfrac{l^{t}\varepsilon-n^{t}\delta}{2}\big)\theta\big[{\varepsilon \atop \delta}\big](\zeta,\tau)}\mathrm{\ for\ }n\mathrm{\ and\ }l\mathrm{\ from\ }\mathbb{Z}^{g}. \label{periodicity}
\end{equation}
The additional simple properties
\[\textstyle{\theta\big[{\varepsilon \atop \delta}\big](-\zeta,\tau)=\theta\big[{-\varepsilon \atop -\delta}\big](\zeta,\tau)\quad\mathrm{and}\quad\theta\big[{\varepsilon+2n \atop \delta+2l}\big](\zeta,\tau)=\mathbf{e}\big(\tfrac{\varepsilon^{t}l}{2}\big)\theta\big[{\varepsilon \atop \delta}\big](\zeta,\tau)}\mathrm{\ when\ }n\mathrm{\ and\ }l\mathrm{\ are\ in\ }\mathbb{Z}^{g}\]
combine to show that when $\varepsilon$ and $\delta$ have integral coordinates we get
\begin{equation}
\textstyle{\theta\big[{\varepsilon \atop \delta}\big](-\zeta,\tau)=\mathbf{e}\big(-\tfrac{\varepsilon^{t}\delta}{2}\big)\theta\big[{\varepsilon \atop \delta}\big](\zeta,\tau)}. \label{halfint}
\end{equation}
Hence $\theta\big[{\varepsilon \atop \delta}\big]$ is an even function if $\varepsilon^{t}\delta$ is even, and it is an odd function when this number is odd. The corresponding characteristics are therefore called \emph{even} or \emph{odd} respectively. Among the $4^{g}$ integral characteristics there are $\frac{4^{g}+2^{g}}{2}$ even ones and $\frac{4^{g}-2^{g}}{2}$ odd ones.

The values $\theta\big[{\varepsilon \atop \delta}\big](0,\tau)$ are called \emph{theta constants}. In the integral case we say that the even characteristic $\big[{\varepsilon \atop \delta}\big]$ is \emph{non-singular} if $\theta\big[{\varepsilon \atop \delta}\big] \neq 0$. On the other hand, given an odd characteristic $\big[{\varepsilon \atop \delta}\big]$, Equation \eqref{halfint} implies that $\theta\big[{\varepsilon \atop \delta}\big]=0$, and our characteristic will be called \emph{non-singular} if $\frac{\partial}{\partial\zeta_{s}}\theta\big[{\varepsilon \atop \delta}\big](\zeta,\tau)\big|_{\zeta=0}$ does not vanish for at least for one index $1 \leq s \leq g$.

Consider the hyper-elliptic curve $X$ from Equation \eqref{curvehyperel}, let $P_{i} \in X$ be the branch point lying over $\lambda_{i}$ for $1 \leq i\leq2g+1$, and denote the branch point over $\infty$ by $P_{\infty}$. To every such $i$ we write the vector $u_{P_{\infty}}(P_{i})$ as $\tau\frac{\varepsilon_{i}}{2}+I\frac{\delta_{i}}{2}$ with integral $\varepsilon_{i}$ and $\delta_{i}$ (modulo $2\mathbb{Z}^{g}$), and denote the characteristic $\big[{\varepsilon_{i} \atop \delta_{i}}\big]$ by simply $[u_{P_{\infty}}(P_{i})]$. Note that $[u_{P_{\infty}}(P_{\infty})]$ is defined similarly, and yields just $\big[{0 \atop 0}\big]$.

We shall make use of the following result, for a proof of which one may consult \cite{[FK]}.
\begin{prop}
The homology basis $\{a_{j}\}_{j=1}^{g}$ and $\{b_{j}\}_{j=1}^{g}$ is completely determined by the characteristics $[u_{P_{\infty}}(P_{i})]$ with $1 \leq i\leq2g+1$. Of these characteristics $g$ are odd and $g+1$ are even, and $[u_{P_{\infty}}(P_{\infty})]$ is even as well. If $\mathcal{I}^{odd}$ denotes the set of those indices $i$ yielding odd characteristics, then the vector $K_{P_{\infty}}$ of Riemann constants that is associated with $P_{\infty}$ is given by \[K_{P_{\infty}}=\sum_{i\in\mathcal{I}^{odd}}u_{P_{\infty}}(P_{i})=-\sum_{i\in\mathcal{I}^{odd}}u_{P_{\infty}}(P_{i})\quad(\mathrm{it\ is\ of\ order\ }2).\] \label{Kvector}
\end{prop}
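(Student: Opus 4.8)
The plan is to reduce the statement to explicit computations in a homology basis adapted to the branch points, exploiting two divisorial relations special to the curve \eqref{curvehyperel}. Since $z-\lambda_i$ has a double zero at $P_i$ and a double pole at $P_\infty$, one has $\mathrm{div}(z-\lambda_i)=2P_i-2P_\infty$, so each $u_{P_\infty}(P_i)$ has order dividing $2$ in $J(X)$; this is exactly what permits the integral characteristic written just before the proposition. Likewise $\mathrm{div}(w)=\sum_{i=1}^{2g+1}P_i-(2g+1)P_\infty$, whence $\sum_{i=1}^{2g+1}u_{P_\infty}(P_i)=0$ in $J(X)$ because a principal divisor has trivial Abel--Jacobi image. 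Thus the half-periods $\eta_i:=[u_{P_\infty}(P_i)]$, $1\le i\le 2g+1$, satisfy the single linear relation $\sum_i\eta_i=0$ and will be seen to span the $2$-torsion $J(X)[2]$, while $[u_{P_\infty}(P_\infty)]=\big[{0\atop0}\big]$.

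Next I would fix the standard cut system on the $\lambda_i$ and the attached canonical basis $\{a_j\},\{b_j\}$, and compute the $\eta_i$ in closed form as in the classical references. With the usual labelling one finds, for $1\le k\le g$, that the characteristic of $P_{2k-1}$ has $\varepsilon^t\delta$ even while that of $P_{2k}$ has $\varepsilon^t\delta$ odd, and that $P_{2g+1}$ and $P_\infty$ are even. Reading parities off \eqref{halfint} then gives exactly $g$ odd characteristics (the $P_{2k}$) and $g+1$ even ones (the $P_{2k-1}$ together with $P_{2g+1}$), plus the even $\big[{0\atop0}\big]$; one checks directly that the explicit vectors sum to $0$ modulo $2\mathbb{Z}^g$, in agreement with the relation above. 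For the first assertion, I would note that the combinatorial data defining the basis, namely the ordering of the branch points and their pairing into cuts, is visibly encoded in the list of characteristics and can be read back from it, so that the basis is pinned down by the $\eta_i$.

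For the vector of Riemann constants I would first establish that it has order $2$: the general identity $2K_{P_\infty}=-u_{P_\infty}(\mathcal{K})$ for a canonical divisor $\mathcal{K}$, together with the fact that $(2g-2)P_\infty$ is canonical on $X$ (it is the divisor of the holomorphic differential $\mathrm{d}z/w$), forces $2K_{P_\infty}=-u_{P_\infty}\big((2g-2)P_\infty\big)=0$. Writing $S:=\sum_{i\in\mathcal{I}^{odd}}u_{P_\infty}(P_i)$, the relation $2u_{P_\infty}(P_i)=0$ gives $2S=0$, so $S=-S$ and the second displayed equality in the statement is automatic once the first, $K_{P_\infty}=S$, is known. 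To obtain this last equality I would substitute the explicit $\eta_i$ into $S$ and compare with the classical closed form of $K_{P_\infty}$ in the adapted basis; equivalently, one may use that $(g-1)P_\infty$ is a half-canonical divisor and follow the standard correspondence between theta characteristics and points of $J(X)[2]$, under which $K_{P_\infty}$ is the class of this half-canonical divisor.

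The main obstacle is precisely this final identification. Everything except $K_{P_\infty}=S$ follows from the two divisorial relations and a parity count, but singling out the sum over the odd indices, rather than some other $2$-torsion point, requires either the full evaluation of the Riemann constant in the chosen basis or a careful passage through the half-canonical correspondence, and it is here that the combinatorics of the cut system must be handled with care.
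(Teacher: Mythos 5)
First, a point of reference: the paper does not prove Proposition \ref{Kvector} at all --- it is quoted as a known result, with the reader referred to \cite{[FK]} for the proof. So there is no internal argument to compare yours against, and your attempt has to stand on its own as a proof. As such, it is incomplete. What you establish correctly are the easy reductions: $\mathrm{div}(z-\lambda_{i})=2P_{i}-2P_{\infty}$ makes each $[u_{P_{\infty}}(P_{i})]$ a half-period, $\mathrm{div}(w)=\sum_{i}P_{i}-(2g+1)P_{\infty}$ gives the relation $\sum_{i}u_{P_{\infty}}(P_{i})=0$, the divisor $(2g-2)P_{\infty}$ of $\mathrm{d}z/w$ gives $2K_{P_{\infty}}=0$, and this last fact does reduce the second displayed equality to the first. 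These are all sound.

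The gap is that the two substantive assertions are never proved. For the parity count ($g$ odd, $g+1$ even) you appeal to explicit formulas for the characteristics ``with the usual labelling'' that you neither write down nor derive from a specified cut system; that is precisely the computation that constitutes the proof in \cite{[FK]}. More seriously, the identity $K_{P_{\infty}}=\sum_{i\in\mathcal{I}^{odd}}u_{P_{\infty}}(P_{i})$ is left unproved: you offer two routes and carry out neither, as your own final paragraph concedes. Note also that the second route you sketch does not work as stated. The observation that $(g-1)P_{\infty}$ is half-canonical identifies $K_{P_{\infty}}$ as the $2$-torsion point corresponding to the theta characteristic class of $(g-1)P_{\infty}$ under $D\mapsto u_{P_{\infty}}(D)+K_{P_{\infty}}$, but this is a tautology, since $u_{P_{\infty}}\big((g-1)P_{\infty}\big)=0$. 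Converting it into the desired identity amounts to showing that the degree-$(g-1)$ theta characteristic $\sum_{i\in\mathcal{I}^{odd}}P_{i}-P_{\infty}$ is the unique class mapping to $0$ under that same correspondence, which is equivalent to the statement being proved and still requires either the explicit evaluation of $K_{P_{\infty}}$ in the chosen basis or a classification of the theta characteristics of a hyper-elliptic curve together with their parities. In short, your proposal correctly isolates, but does not close, exactly the computation performed in \cite{[FK]}.
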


In the hyper-elliptic case, the characteristics that are relevant for the Thomae formulae (involving either the constants or the derivatives) can be given in terms of certain partitions of the branch points. Recall that for a real number $t$ the symbol $\lfloor t \rfloor$ stands for the largest integer that does not exceed $t$, and that $|Y|$ denotes the cardinality of the finite set $Y$. Then there is one-to-one correspondence, given in, e.g., page 13 of \cite{[F1]}, between the integral characteristics $\big[{\varepsilon \atop \delta}\big]$ and the union over $0 \leq m\leq\big\lfloor\frac{g+1}{2}\big\rfloor$ of partitions of the set of indices $1 \leq i \leq2g+1$ together with $\infty$ into the disjoint union of two sets
\begin{equation}
\mathcal{I}_{m}\cup\mathcal{J}_{m},\mathrm{\ with\ }|\mathcal{I}_{m}|=g+1-2m\mathrm{\ and\ }|\mathcal{J}_{m}|=g+1+2m,\mathrm{\ where\ for\ }m=0\mathrm{\ we\ have\ }\infty\in\mathcal{I}_{0}. \label{partition}
\end{equation}
Given such $m$, there are $\binom{2g+2}{g+1-2m}$ possible choices in Equation \eqref{partition} if $m\geq1$ and $\binom{2g+1}{g}$ choices when $m=0$ (in total indeed $4^{g}$ characteristics), and given such a partition the corresponding characteristic arises from the vector
\begin{equation}
e(\mathcal{I}_{m})=\sum_{i\in\mathcal{I}_{m}}u_{P_{\infty}}(P_{i})+K_{P_{\infty}}. \label{charIm}
\end{equation}
In this case $m$ is the index of speciality of the divisor arising from Equation \eqref{charIm} (in particular, the parity of $[e(\mathcal{I}_{m})]$ is the parity of $m$). We will be interested in the case $m=0$ of non-singular even theta constants, as well as the case $m=1$ of non-singular odd theta derivatives.

Given any subset $\mathcal{I}$ of $\{i\in\mathbb{N}|1 \leq i\leq2g+1\}\cup\{\infty\}$, we define the expression
\begin{equation}
\Delta(\mathcal{I})=\prod_{i,j\in\mathcal{I}\setminus\{\infty\},\ i<j}(\lambda_{i}-\lambda_{j}), \label{deltas}
\end{equation}
which can be viewed as a determinant of a Vandermonde matrix. Then the usual Thomae formula, involving theta constants, reads as follows.
\begin{thm}
Let $\mathcal{I}_{0}\cup\mathcal{J}_{0}$ be as in Equation \eqref{partition} with $m=0$, and set $[e(\mathcal{I}_{0})]$ to be the corresponding characteristic from Equation \eqref{charIm} and $C$ as in Equation \eqref{matrices}. Then we have
\[\theta[e(\mathcal{I}_{0})](0,\tau)=\epsilon_{\mathcal{I}_{0}}\bigg(\frac{\det C}{2^{g}\pi^{g}}\bigg)^{1/2}\Delta(\mathcal{I}_{0})^{1/4}\Delta(\mathcal{J}_{0})^{1/4},\] where $\Delta(\mathcal{I}_{0})$ and $\Delta(\mathcal{J}_{0})$ are defined in Equation \eqref{deltas}, and $\epsilon_{\mathcal{I}_{0}}$ is an 8th root of unity. \label{Thomae1Z2}
\end{thm}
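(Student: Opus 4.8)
The plan is to prove the classical Thomae formula for theta constants on the hyper-elliptic curve, relating $\theta[e(\mathcal{I}_0)](0,\tau)$ to the branch points and the period matrix $C$. The standard strategy rests on constructing an auxiliary meromorphic function on $X$ whose divisor is explicitly controlled, and comparing the two natural expressions for it. Concretely, I would begin by fixing the partition $\mathcal{I}_0\cup\mathcal{J}_0$ with $\infty\in\mathcal{I}_0$, and consider the function on $X^g$ built from the theta function evaluated along the Abel--Jacobi image. The key object is the ratio of $\theta[e(\mathcal{I}_0)]$ to an appropriate product of the differences $(z-\lambda_i)$ raised to suitable powers; the aim is to show that this ratio is, up to constants, a power of the section defining the relevant branch points.

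First I would set up the analytic side. Using the transformation rule \eqref{transchar} together with the periodicity \eqref{periodicity}, I would transfer everything to the plain theta function $\theta=\theta\big[{0\atop0}\big]$ and study the zero divisor of $\theta\big(u_{P_\infty}(P)-e(\mathcal{I}_0),\tau\big)$ as $P$ varies over $X$. By Riemann's vanishing theorem, since $[e(\mathcal{I}_0)]$ is non-singular even ($m=0$), this zero divisor is supported exactly on the $g$ branch points whose indices lie in a prescribed subset determined by $\mathcal{I}_0$ and $K_{P_\infty}$, invoking Proposition \ref{Kvector} for the description of $K_{P_\infty}$ in terms of the odd characteristics. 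Next I would form a genuinely single-valued meromorphic function on $X$ by taking an appropriate quotient of such theta expressions (or their squares/fourth powers), so that all the quasi-periodicity factors from \eqref{periodicity} and \eqref{halfint} cancel and the function is honestly defined on the curve.

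The central computation is then to identify this meromorphic function concretely as a rational function of $z$ and $w$ — essentially a ratio of products of $(z-\lambda_i)$ over the two blocks $\mathcal{I}_0$ and $\mathcal{J}_0$ — by matching divisors via the hyper-elliptic structure. Once the function is identified on the nose, I would evaluate both descriptions at a convenient point (taking a limit as $P$ tends to a branch point or to $P_\infty$, where the local coordinate is $w$ or $\sqrt{z}$), and extract the leading Laurent coefficients. On the theta side the leading coefficient involves $\theta[e(\mathcal{I}_0)](0,\tau)$ raised to the fourth power and the Jacobian of the Abel--Jacobi map, which is precisely where the factor $\det C$ enters through the change of basis from $\{v_s\}$ to $\{z^{l-1}\mathrm{d}z/w\}$ recorded in \eqref{matrices}; on the algebraic side the leading coefficient is built from the Vandermonde products $\Delta(\mathcal{I}_0)$ and $\Delta(\mathcal{J}_0)$. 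Matching the two, and carefully tracking the powers of $2$ and $\pi$ coming from the normalization of the differentials and the theta series, yields the asserted identity up to an 8th root of unity $\epsilon_{\mathcal{I}_0}$.

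I expect the main obstacle to be the bookkeeping of constants and roots of unity. The overall shape $\big(\det C/2^g\pi^g\big)^{1/2}\Delta(\mathcal{I}_0)^{1/4}\Delta(\mathcal{J}_0)^{1/4}$ follows fairly robustly from the divisor-matching argument, but pinning down the exact numerical prefactor $2^g\pi^g$ requires controlling the precise local expansions of $\theta$ near its zeros and of the holomorphic differentials near the branch points, and the ambiguity $\epsilon_{\mathcal{I}_0}$ reflects the fourth-root and square-root extractions that cannot be resolved by this purely local comparison. Since the theorem only claims the formula up to such an 8th root of unity, I would not attempt to determine $\epsilon_{\mathcal{I}_0}$ explicitly; the remaining work is to verify that the limiting procedure is legitimate and that no fourth power has been silently introduced, which is the delicate but routine part of the argument.
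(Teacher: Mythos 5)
First, a point of comparison: the paper does not prove Theorem \ref{Thomae1Z2} at all --- it invokes it as a known classical result, citing \cite{[F1]}, where the proof is a variational one (Rauch's formula for the variation of $\tau$ in the branch points combined with the heat equation for $\theta$, plus a normalization step). So your sketch must be judged against that classical argument.

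Your proposal has a genuine gap, and it sits exactly at the point you dismiss as ``bookkeeping.'' The divisor-matching part of your plan --- building a single-valued meromorphic function on $X$ out of theta quotients and identifying it with a rational function of $z$ --- can only ever determine \emph{ratios} of theta constants attached to different partitions: the constant of proportionality in the function identification is an unknown function of the moduli, so this argument yields $\theta[e(\mathcal{I}_{0})]^{4}(0,\tau)=c\,\Delta(\mathcal{I}_{0})\Delta(\mathcal{J}_{0})$ with $c$ independent of the partition, but says nothing about the value of $c$. (This is precisely the content, and the limitation, of arguments like Lemma \ref{thetaquotient} in the paper.) Your plan to pin down $c$ by comparing leading Laurent coefficients at a branch point or at $P_{\infty}$ does not close this gap: the leading coefficient of $\theta\big(u_{P_{\infty}}(P)-e,\tau\big)$ at one of its zeros is $\sum_{s}\partial_{s}\theta(\cdot)\,v_{s}$, i.e.\ it involves the \emph{gradient} of theta, not a theta constant, so the comparison produces relations mixing theta constants, theta gradients, $C$, and the $\lambda_{i}$'s. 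Eliminating the gradients requires an independent deep input --- the Riemann--Jacobi derivative formula, or equivalently a statement like Theorem \ref{Thomae2Z2}; but in this paper's logical structure Theorem \ref{Thomae2Z2} and the Riemann--Jacobi formula (via Equation \eqref{matrixthomae2}) are \emph{consequences} of Theorem \ref{Thomae1Z2}, so that route is circular. The genus-one case makes the gap transparent: your method gives $z-\lambda_{i}=c_{i}\,\theta[e_{i}]^{2}(u)/\theta[\delta]^{2}(u)$ with $\delta$ the odd characteristic, and comparing leading terms at $P_{\infty}$ (where $z\approx 4/(C^{2}u^{2})$) yields $c_{i}=4\,\theta'[\delta](0)^{2}\big/\big(C^{2}\theta[e_{i}](0)^{2}\big)$; the factor $\pi$ in Thomae's formula then enters \emph{only} through Jacobi's identity $\theta'[\delta](0)=\pi\theta[e_{1}](0)\theta[e_{2}](0)\theta[e_{3}](0)$, which your argument nowhere supplies.

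What is actually needed to produce the absolute prefactor $\big(\det C/2^{g}\pi^{g}\big)^{1/2}$ is a global-in-moduli argument: differentiate $\log\theta[e(\mathcal{I}_{0})](0,\tau)$ with respect to the $\lambda_{i}$ using Rauch's variational formula together with the heat equation, verify that both sides of the asserted identity satisfy the same first-order system, and then fix the remaining moduli-independent constant (e.g.\ by degeneration). Without some such input, your argument proves only the quotient form of Thomae's formula, which is strictly weaker than the statement of Theorem \ref{Thomae1Z2}.
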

We can also state the Thomae derivative formula.
\begin{thm}
Let $\mathcal{I}_{1}\cup\mathcal{J}_{1}$ be a partition as in Equation \eqref{partition} with $m=1$ and with $\infty\not\in\mathcal{I}_{1}$, and consider the Vandermonde determinants $\Delta(\mathcal{I}_{1})$ and $\Delta(\mathcal{J}_{1})$ from Equation \eqref{deltas}. Then for every $1 \leq s \leq g$ we have the equality \[\frac{\partial}{\partial\zeta_{s}}\theta[e(\mathcal{I}_{1})](\zeta,\tau)\bigg|_{\zeta=0}=
\epsilon_{\mathcal{I}_{1}}\bigg(\frac{\det C}{2^{g+2}\pi^{g}}\bigg)^{1/2}\Delta(\mathcal{I}_{1})^{1/4}\Delta(\mathcal{J}_{1})^{1/4}
\sum_{l=1}^{g}C_{ls}(-1)^{g-l}\sigma_{g-l}(\mathcal{I}_{1}),\] where $C$ and $C_{ls}$ are defined in Equation \eqref{matrices}, $\epsilon_{\mathcal{I}_{1}}$ is a 8th root of unity that is independent of $s$, and $\sigma_{p}(\mathcal{I}_{1})$ is the elementary symmetric function of degree $p$ in $\{\lambda_{i}\}_{i\in\mathcal{I}_{1}}$. \label{Thomae2Z2}
\end{thm}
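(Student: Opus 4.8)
The plan is to first recognize the gradient of $\theta[e(\mathcal{I}_1)]$ as a concrete holomorphic differential, then read off the \emph{shape} of the formula directly from the hyper-elliptic model, and only afterwards determine the scalar prefactor by matching against the constant Thomae formula of Theorem \ref{Thomae1Z2}. Since $|\mathcal{I}_1|=g-1$ and $m=1$, the divisor $\mathcal{D}=\sum_{i\in\mathcal{I}_1}P_i$ is special with index of speciality $1$, so by Riemann's singularity theorem (see \cite{[FK]}) the half-period $e(\mathcal{I}_1)$ of \eqref{charIm} is a non-singular odd zero of $\theta$, and the differential
\[\omega_{\mathcal{I}_1}(P):=\sum_{s=1}^{g}\frac{\partial\theta[e(\mathcal{I}_1)]}{\partial\zeta_s}(0,\tau)\,v_s(P)\]
is the unique-up-to-scale holomorphic differential with divisor $2\mathcal{D}$, i.e.\ with a double zero at each branch point $P_i$, $i\in\mathcal{I}_1$. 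In the hyper-elliptic model such a differential must be a scalar multiple of $\frac{\prod_{i\in\mathcal{I}_1}(z-\lambda_i)}{w}\mathrm{d}z$, since this differential is holomorphic, acquires exactly a double zero at each $P_i$ with $i\in\mathcal{I}_1$ (a simple root of the numerator at a branch point produces an order-$2$ zero), and has no other zeros by a degree count.

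From here the structural part is immediate. Expanding
\[\prod_{i\in\mathcal{I}_1}(z-\lambda_i)=\sum_{l=1}^{g}(-1)^{g-l}\sigma_{g-l}(\mathcal{I}_1)\,z^{l-1}\]
and substituting $\frac{z^{l-1}}{w}\mathrm{d}z=\sum_s C_{ls}v_s$ from \eqref{matrices}, I obtain $\omega_{\mathcal{I}_1}=c\sum_s\big(\sum_l C_{ls}(-1)^{g-l}\sigma_{g-l}(\mathcal{I}_1)\big)v_s$ for a single scalar $c$. Matching the coefficient of $v_s$ with the definition of $\omega_{\mathcal{I}_1}$ gives, for every $s$,
\[\frac{\partial\theta[e(\mathcal{I}_1)]}{\partial\zeta_s}(0,\tau)=c\sum_{l=1}^{g}C_{ls}(-1)^{g-l}\sigma_{g-l}(\mathcal{I}_1),\]
which already reproduces the asserted shape; and because $c$ is independent of $s$, the eventual root of unity $\epsilon_{\mathcal{I}_1}$ is automatically independent of $s$ as claimed.

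The remaining and principal task is to evaluate $c$. For this I would use the standard expression of the prime form through the odd non-singular characteristic $e(\mathcal{I}_1)$, namely $E(P,Q)=\theta[e(\mathcal{I}_1)]\big(u_{P_\infty}(P)-u_{P_\infty}(Q)\big)\big/\sqrt{\omega_{\mathcal{I}_1}(P)\,\omega_{\mathcal{I}_1}(Q)}$ (see \cite{[F1]}), and specialize $P\to P_a$, $Q\to P_b$ at two branch points with $a,b\in\mathcal{J}_1\setminus\{\infty\}$ (which exist for $g\geq2$; the case $g=1$ is Jacobi's classical derivative relation and serves as the base case). On the left, $u_{P_\infty}(P_a)-u_{P_\infty}(P_b)$ shifts $e(\mathcal{I}_1)$ to the even non-singular half-period $e(\mathcal{I}_1\cup\{a,b\})$, so by \eqref{transchar} the left-hand side becomes a unit times a theta constant governed by Theorem \ref{Thomae1Z2}. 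On the right, the leading coefficients of $\omega_{\mathcal{I}_1}$ at $P_a$ and $P_b$ are $c$ times the explicit products $\prod_{i\in\mathcal{I}_1}(\lambda_a-\lambda_i)$ and $\prod_{i\in\mathcal{I}_1}(\lambda_b-\lambda_i)$ divided by $\sqrt{f'(\lambda_a)}$ and $\sqrt{f'(\lambda_b)}$ in the branch-point parameter, while $E(P_a,P_b)$ has a standard expansion in $\lambda_a-\lambda_b$. Squaring and solving for $c$, the products of differences $(\lambda_i-\lambda_j)$ reorganize, via Theorem \ref{Thomae1Z2} applied to $\mathcal{I}_1\cup\{a,b\}$ and the definitions in \eqref{deltas}, into the quarter-powers $\Delta(\mathcal{I}_1)^{1/4}\Delta(\mathcal{J}_1)^{1/4}$ and the factor $(\det C/2^{g+2}\pi^g)^{1/2}$, leaving only the $8$th-root-of-unity ambiguity $\epsilon_{\mathcal{I}_1}$ coming from the choices of branches of the square roots and quarter-powers.

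I expect the main obstacle to be entirely in this last step: the constant bookkeeping. The structural identification of $\omega_{\mathcal{I}_1}$ is essentially forced, but carefully tracking the powers of $2$ and $\pi$, verifying that the branch-point factors recombine \emph{exactly} into the two Vandermonde quarter-powers, and — most delicately — checking that the auxiliary indices $a,b$ drop out of the final answer (so that $c$ is genuinely well defined) together with controlling the $8$th root of unity, is where scrupulousness is required.
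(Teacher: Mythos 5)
Your structural half is correct, and it is genuinely different from the paper's route. The paper never names the differential $\omega_{\mathcal{I}_1}$: it proves a function-theoretic identity for a quotient of squared theta functions (Lemma \ref{thetaquotient}), computes the Jacobian of the Abel map at a non-special divisor (Lemma \ref{dQrdzetas}), and then differentiates and degenerates the points onto branch points, so that the shape $\sum_{l}C_{ls}(-1)^{g-l}\sigma_{g-l}(\mathcal{I}_{1})$ and the scalar prefactor emerge together from one limit computation. Your observation --- that the gradient of $\theta[e(\mathcal{I}_1)]$ at $0$ defines the unique-up-to-scale holomorphic differential with divisor $2\sum_{i\in\mathcal{I}_1}P_i$, which in the hyper-elliptic model must be $c\prod_{i\in\mathcal{I}_1}(z-\lambda_i)\frac{\mathrm{d}z}{w}$ --- gives the shape, and the $s$-independence of the constant, in two lines; that part is clean and legitimate.

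The gap is in the evaluation of $c$, and it is a gap of substance rather than bookkeeping. First, the prime-form formula you invoke is the \emph{definition} of $E(P,Q)$ in terms of exactly the quantities you are solving for; to extract $c$ you need an \emph{independent} evaluation of $E(P_a,P_b)$ at two \emph{distinct} branch points, and ``a standard expansion in $\lambda_a-\lambda_b$'' is not such a thing: the standard expansion $E(P,Q)\sim\big(t(P)-t(Q)\big)\big/\sqrt{\mathrm{d}t(P)\,\mathrm{d}t(Q)}$ holds only as $P \to Q$, while $P_a$ and $P_b$ are fixed and far apart. The closed evaluation one actually needs (that in the coordinates $t_a,t_b$ of Equation \eqref{trzr2}, and for suitable path choices, $E(P_a,P_b)^{2}$ is a unit times $\lambda_a-\lambda_b$) is true, but it is itself a theorem, proved by representing the functions $z-\lambda_a$ and $z-\lambda_b$ through prime forms and fixing constants by diagonal asymptotics --- work of the same order as the paper's Lemma \ref{thetaquotient}, which your proposal leaves entirely unperformed. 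Second, your error accounting fails at one essential point: $u_{P_\infty}(P_a)-u_{P_\infty}(P_b)$ is a genuine half-period $\tau\frac{\varepsilon'}{2}+\frac{\delta'}{2}$ with $\varepsilon'\neq0$ in general, so Equation \eqref{transchar} converts $\theta[e(\mathcal{I}_1)]\big(u_{P_\infty}(P_a)-u_{P_\infty}(P_b)\big)$ into a theta constant only at the cost of a factor $\mathbf{e}\big[-(\varepsilon')^{t}\tau\varepsilon'/8-\cdots\big]$, which is \emph{not} a root of unity (its modulus is $e^{\pi\,\mathrm{Im}((\varepsilon')^{t}\tau\varepsilon')/4}$). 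In the paper's proof such Gaussian factors cancel automatically because numerator and denominator are theta functions evaluated at the \emph{same} argument; in your equation they have no visible partner, and they can only cancel against the matching path-dependence of $E(P_a,P_b)$ itself (the prime form is single-valued only up to factors $\exp(-\pi\imath\, n^{t}\tau n-\cdots)$ coming from the homology class of the path from $P_b$ to $P_a$). Until you fix paths consistently, verify this cancellation, and check that the auxiliary indices $a,b$ drop out, you have not shown that the residual ambiguity is an 8th root of unity; as written, the ``unit times a theta constant'' claims are false, and this is precisely where the analytic content of the theorem lives.
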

The proof of Theorem \ref{Thomae1Z2} is well-documented, see, e.g., \cite{[F1]}. We now present an elementary proof of Theorem \ref{Thomae2Z2}. We recall the notation $f(z)$ from Equation \eqref{curvehyperel}, and prove the following lemma.
\begin{lem}
For any $1 \leq k\leq2g+1$ there exists a 4th root of unity $\tilde{\epsilon}_{k}$ such that as functions of the $g$ points $Q_{r}$, $1 \leq r \leq g$ on $X$ we have the equality \[\frac{\theta[u_{P_{\infty}}(P_{k})]^{2}\big(\sum_{r=1}^{g}u_{P_{\infty}}(Q_{r})+K_{P_{\infty}},\tau\big)}
{\theta^{2}\big(\sum_{r=1}^{g}u_{P_{\infty}}(Q_{r})+K_{P_{\infty}},\tau\big)}=\frac{\tilde{\epsilon}_{k}}{\sqrt{f'(\lambda_{k})}}\prod_{r=1}^{g}\big(\lambda_{k}-z(Q_{r})\big).\] \label{thetaquotient}
\end{lem}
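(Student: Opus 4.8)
The plan is to regard both sides as meromorphic functions of the single point $Q_{1}$, keeping $Q_{2},\dots,Q_{g}$ generic and fixed, to match their divisors, and only at the very end to pin down the multiplicative constant. Write $\omega$ for the hyper-elliptic involution, so that $z(\omega Q)=z(Q)$ and $u_{P_{\infty}}(\omega Q)=-u_{P_{\infty}}(Q)$ (because $Q+\omega Q\sim 2P_{\infty}$), and recall from Proposition \ref{Kvector} that $2K_{P_{\infty}}=0$. The first point to settle is that the left-hand side is genuinely single-valued in each variable. Writing $[u_{P_{\infty}}(P_{k})]=\big[{\varepsilon_{k}\atop\delta_{k}}\big]$ with integral entries and setting $U=\sum_{r}u_{P_{\infty}}(Q_{r})+K_{P_{\infty}}$, the periodicity \eqref{periodicity} shows that translating $U$ by a period $\tau n+l$ multiplies the numerator $\theta[u_{P_{\infty}}(P_{k})]^{2}$ and the denominator $\theta^{2}$ by factors whose quotient is $\mathbf{e}(l^{t}\varepsilon_{k}-n^{t}\delta_{k})=1$, since the exponent is an integer and there is no factor of $\tfrac12$ after squaring. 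Hence the quotient descends to a single-valued meromorphic function on $J(X)$, whose pullback in $Q_{1}$ is a genuine meromorphic function on $X$.

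Next I would compute its divisor in $Q_{1}$ using Riemann's vanishing theorem (as in \cite{[FK]}). By \eqref{transchar} the numerator equals, up to a nonvanishing factor, $\theta\big(u_{P_{\infty}}(Q_{1})+c+u_{P_{\infty}}(P_{k})\big)^{2}$ and the denominator equals $\theta\big(u_{P_{\infty}}(Q_{1})+c\big)^{2}$, where $c=\sum_{r\geq2}u_{P_{\infty}}(Q_{r})+K_{P_{\infty}}$. Using $2K_{P_{\infty}}=0$, the relation $u_{P_{\infty}}(\omega Q)=-u_{P_{\infty}}(Q)$, and the fact that $u_{P_{\infty}}(P_{k})$ is a half-period, Riemann's theorem locates the $g$ zeros of the numerator at $\{\omega Q_{2},\dots,\omega Q_{g},P_{k}\}$ and those of the denominator at $\{\omega Q_{2},\dots,\omega Q_{g},P_{\infty}\}$; a direct check confirms this, since at $Q_{1}=P_{k}$ the numerator argument is $\equiv c=\sum_{r\geq2}u_{P_{\infty}}(Q_{r})+K_{P_{\infty}}$, which lies on the theta divisor, and likewise the denominator argument is $\equiv c$ at $Q_{1}=P_{\infty}$. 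The $g-1$ common zeros $\omega Q_{2},\dots,\omega Q_{g}$ cancel in the quotient, so the divisor of the left-hand side in $Q_{1}$ is $2P_{k}-2P_{\infty}$.

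This is exactly the divisor of $\lambda_{k}-z(Q_{1})$, because the degree-two map $z$ is branched with multiplicity two over $\lambda_{k}$ at $P_{k}$ and over $\infty$ at $P_{\infty}$. Consequently the two sides agree up to a factor independent of $Q_{1}$, and since $U$ is symmetric in $Q_{1},\dots,Q_{g}$, repeating the argument in each variable (standard separation of variables) upgrades this to the equality of the left-hand side with $A_{k}\prod_{r=1}^{g}\big(\lambda_{k}-z(Q_{r})\big)$ for a single constant $A_{k}$ depending only on $k$. It remains to identify $A_{k}=\tilde{\epsilon}_{k}/\sqrt{f'(\lambda_{k})}$ with $\tilde{\epsilon}_{k}$ a fourth root of unity.

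This last step is the main obstacle. I would handle it by specializing the divisor $\sum_{r}Q_{r}$ to a configuration of finite branch points $\{P_{i}\}_{i\in\mathcal{I}}$ for which $U=e(\mathcal{I})$ and $U+u_{P_{\infty}}(P_{k})$ are \emph{both} non-singular even half-periods, so that both theta values become nonzero constants governed by Theorem \ref{Thomae1Z2}; the parity bookkeeping (each branch point shifts the parity of the characteristic according to whether its half-period is even or odd) forces a careful choice of $\mathcal{I}$ relative to $k$, controlled by Proposition \ref{Kvector}. On the right-hand side the product $\prod_{i\in\mathcal{I}}(\lambda_{k}-\lambda_{i})$ is a ratio of the Vandermonde expressions $\Delta(\cdot)$ of \eqref{deltas}, while on the left-hand side the quotient of the two Thomae constants supplies matching quarter-powers of these same $\Delta$'s together with a factor of $\det C$ that cancels. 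Collecting the differences of branch points then reconstitutes $f'(\lambda_{k})=\prod_{i\neq k}(\lambda_{k}-\lambda_{i})$ under a square root, and the residual phase — a ratio of the two eighth roots $\epsilon_{(\cdot)}$ from Theorem \ref{Thomae1Z2} together with the sign from \eqref{halfint} — collapses to a fourth root of unity $\tilde{\epsilon}_{k}$. The delicate points are precisely the non-singularity and parity of the chosen half-periods and the exact tracking of the roots of unity; everything else reduces to the divisor count above.
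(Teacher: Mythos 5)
Your proposal is correct and follows essentially the same route as the paper: fix all points but one, match divisors in that variable (your zeros $\omega Q_{2},\ldots,\omega Q_{g}$ are exactly the points $R_{t}^{(i)}$ completing $\sum_{r \neq i}Q_{r}$ to the unique positive canonical divisor, which is how the paper phrases it), conclude the product form by symmetry, and then evaluate the constant by specializing the $Q_{r}$ to branch points and invoking Theorem \ref{Thomae1Z2} for both characteristics. The only step you flag as delicate---choosing $\mathcal{I}$ so that both half-periods are non-singular and even---is resolved automatically in the paper by taking $\mathcal{I}_{0}$ to be any $m=0$ partition as in Equation \eqref{partition} with $k\notin\mathcal{I}_{0}$ and the $Q_{r}$ its finite points, since adding $u_{P_{\infty}}(P_{k})$ then just replaces $\infty$ by $k$ and produces another $m=0$ partition, so both characteristics are non-singular even by construction.
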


\begin{proof}
Consider the expression
\begin{equation}
F(Q_{1},\ldots,Q_{g})=\frac{\theta^{2}\big(u_{P_{\infty}}(P_{k})+\sum_{r=1}^{g}u_{P_{\infty}}(Q_{r})+K_{P_{\infty}}\big)}
{\theta^{2}\big(\sum_{r=1}^{g}u_{P_{\infty}}(Q_{r})+K_{P_{\infty}}\big)} \label{quotdef}
\end{equation}
at a neighborhood in $X^{g}$ in which all the points are distinct from one another and from $P_{\infty}$, and such that the divisor $\sum_{r=1}^{g}Q_{r}$ is non-special (so that the denominator does not vanish). Fix an index $1 \leq i \leq g$, and then there is only one positive canonical divisor that contains all the points $Q_{r}$ with $r \neq i$ in its support (since $\sum_{r \neq i}Q_{r}$ has index of specialty 1 by Riemann--Roch). Write this canonical divisor as $\sum_{r \neq i}Q_{r}+\sum_{t=1}^{g-1}R_{t}^{(i)}$, and consider the expression from Equation \eqref{quotdef} as a function of $Q_{i}$, with the other $Q_{r}$s fixed. By the Riemann Vanishing Theorem, the numerator has zeros of order 2 at $P_{k}$ and the points $R_{t}^{(i)}$ with $1 \leq t \leq g-1$ (counted with multiplicities), while the zeros of the denominator are $P_{\infty}$ and the $R_{t}^{(i)}$'s, again of order 2. By Equation \eqref{transchar} this also describes the zeros and poles of the desired quotient (again as a function of $Q_{i}$ alone), and since Equation \eqref{periodicity} shows that this expression does not change under adding periods, we find that it is a meromorphic function on $X$. Our evaluation of its divisor determines this function as a constant multiple of $\lambda_{k}-z(Q_{i})$.

Now, since $i$ was arbitrary, we find that the only dependence of the desired left hand side on every $Q_{r}$ is via a multiplier of $\lambda_{k}-z(Q_{r})$. This yields the equality
\[\frac{\theta[u_{P_{\infty}}(P_{k})]^{2}\big(\sum_{r=1}^{g}u_{P_{\infty}}(Q_{r})+K_{P_{\infty}}\big)}{\theta^{2}\big(\sum_{r=1}^{g}u_{P_{\infty}}(Q_{r})
+K_{P_{\infty}}\big)}=c\prod_{r=1}^{g}\big(\lambda_{k}-z(Q_{r})\big)\] for some constant $c$ that is independent of the $Q_{r}$s. To determine the value of $c$ we apply Equation \eqref{transchar} once to the denominator and twice to the numerator, and find that if \[\sum_{r=1}^{g}u_{P_{\infty}}(Q_{r})+K_{P_{\infty}}=\tau\tfrac{\varepsilon}{2}+I\tfrac{\delta}{2}\quad\mathrm{and}\quad u_{P_{\infty}}(P_{k})=\tau\tfrac{\mu}{2}+I\tfrac{\nu}{2}\quad\mathrm{modulo}\quad\tau\mathbb{Z}^{g}\oplus\mathbb{Z}^{g},\] then the quotient on the left hand side is \[\mathbf{e}\big(-\tfrac{\varepsilon^{t}\nu}{2}\big)\frac{\theta\big[u_{P_{\infty}}(P_{k})+\sum_{r=1}^{g}u_{P_{\infty}}(Q_{r})+K_{P_{\infty}}\big]^{2}
(0,\tau)}{\theta\big[\sum_{r=1}^{g}u_{P_{\infty}}(Q_{r})+K_{P_{\infty}}\big]^{2}(0,\tau)}.\] Take now the $Q_{r}$s to be the elements of a set $\mathcal{I}_{0}$ as in Equation \eqref{partition} that does not contain $P_{k}$, so that $\varepsilon$ has integral entries and the characteristic in the latter denominator is just $[e(\mathcal{I}_{0})]$. Then adding $u_{P_{\infty}}(P_{k})$ replaces $\infty\in\mathcal{I}_{0}$ by $k$, and we denote this set by $\tilde{\mathcal{J}}_{0}$ and its complement by $\tilde{\mathcal{I}}_{0}$ (since Equation \eqref{partition} requires $\infty$ to be in the part denoted with $\mathcal{I}$). It follows that the numerator involves the characteristic $[e(\tilde{\mathcal{I}}_{0})]$, and by applying Theorem \ref{Thomae1Z2} to both the numerator and denominator we deduce that \[c\mathrm{\ is\ a\ 4th\ root\ of\ unity\ times\ }\frac{\Delta(\tilde{\mathcal{I}}_{0})^{1/2}\Delta(\tilde{\mathcal{J}}_{0})^{1/2}}{\Delta(\mathcal{I}_{0})^{1/2}\Delta(\mathcal{J}_{0})^{1/2}\prod_{i\in\mathcal{I}_{0}\setminus\{\infty\}}(\lambda_{k}-\lambda_{i})}\] (the product in the denominator is the product of the expressions $\lambda_{k}-z(Q_{r})$ with our choice of $Q_{r}$, $1 \leq r \leq g$). Since the relations between $\mathcal{I}_{0}$ and $\tilde{\mathcal{J}}_{0}$ and between $\mathcal{J}_{0}$ and $\tilde{\mathcal{I}}_{0}$ reduce the latter quotient (up to an additional 4th root of unity) to \[\frac{1}{\prod_{1 \leq i\leq2g+1,i \neq k}(\lambda_{k}-\lambda_{i})^{1/2}}=\frac{1}{\sqrt{f'(\lambda_{k})}}\] (by the standard evaluation of the latter derivative), this proves the lemma.
\end{proof}

In order to prove Theorem \ref{Thomae2Z2} we shall differentiate the expression from Lemma \ref{thetaquotient} with respect to $\zeta_{s}$. For this we need to know the derivatives $\frac{\partial z(Q_{r})}{\partial\zeta_{s}}$.
\begin{lem}
Consider the function $\zeta=\sum_{r=1}^{g}u_{P_{\infty}}(Q_{r})$ of the $g$ points $\{Q_{r}\}_{r=1}^{g}$, and assume that the values $\{z(Q_{r})\}_{r=1}^{g}$ are distinct and that no $Q_{r}$ is a branch point of $z$. Let $C$ be as in Equation \eqref{matrices}, and set $\mathcal{A}$ to be the matrix with $lr$-entry $\frac{z(Q_{r})^{l-1}}{w(Q_{r})}$. Then we have the equality \[\frac{\partial z(Q_{r})}{\partial\zeta_{s}}=(\mathcal{A}^{-1}C)_{rs}=\frac{w(Q_{r})}{\prod_{i \neq r}\big(z(Q_{r})-z(Q_{i})\big)}\sum_{l=1}^{g}(-1)^{g-l}\sigma_{g-l}^{(r)}(Q_{1},\ldots,Q_{g})C_{ls}.\] \label{dQrdzetas}
\end{lem}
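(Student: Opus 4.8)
The plan is to compute the derivative $\frac{\partial z(Q_r)}{\partial\zeta_s}$ by differentiating the defining relations of the Abel--Jacobi map. Recall that $\zeta_s = \sum_{r=1}^{g}\bigl(u_{P_\infty}(Q_r)\bigr)_s$, where the $s$-th component of $u_{P_\infty}(Q_r)$ is $\int_{P_\infty}^{Q_r} v_s$. Differentiating $\zeta_s$ with respect to the coordinate $z(Q_t)$ of one of the points gives $v_s$ evaluated against $dz(Q_t)$; more precisely, since $v_s = \sum_{l=1}^{g}(C^{-1})_{ls}\frac{z^{l-1}}{w}\,\mathrm dz$ (because $C$ is the transition matrix from the dual basis $\{v_s\}$ to the polynomial basis $\{z^{l-1}\mathrm dz/w\}$), we obtain $\frac{\partial\zeta_s}{\partial z(Q_t)} = \sum_{l}(C^{-1})_{ls}\frac{z(Q_t)^{l-1}}{w(Q_t)}$. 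In matrix form this is exactly the $(s,t)$ entry of $(C^{-1})^{t}\mathcal A = C^{-t}\mathcal A$, viewing $\mathcal A$ with $lr$-entry $z(Q_r)^{l-1}/w(Q_r)$.

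Having identified the Jacobian of the map $\zeta \mapsto (z(Q_1),\dots,z(Q_g))^{-1}$, the first equality $\frac{\partial z(Q_r)}{\partial\zeta_s}=(\mathcal A^{-1}C)_{rs}$ follows by inverting: if $\frac{\partial\zeta_s}{\partial z(Q_t)} = (C^{-t}\mathcal A)_{st} = (\mathcal A^{t}C^{-1})_{ts}$, then the inverse Jacobian matrix has $(r,s)$-entry equal to the $(r,s)$-entry of $(\mathcal A^{t}C^{-1})^{-t} = \mathcal A^{-1}C$. I would set up the index bookkeeping carefully here, since it is easy to transpose incorrectly; the cleanest route is to write $\mathrm d\zeta = M\,\mathrm d(\text{coords})$ for an explicit $M$ and invert. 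The nondegeneracy of $\mathcal A$ (a Vandermonde-type matrix, nonsingular because the $z(Q_r)$ are distinct) and of $C$ guarantees the Jacobian is invertible, which also justifies that $z(Q_r)$ is locally a smooth function of $\zeta$.

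For the second equality I would compute $(\mathcal A^{-1}C)_{rs}$ explicitly using the cofactor/Cramer formula for $\mathcal A^{-1}$. The matrix $\mathcal A$ factors as $D^{-1}V$, where $V$ is the genuine Vandermonde matrix with $lr$-entry $z(Q_r)^{l-1}$ and $D=\mathrm{diag}(w(Q_1),\dots,w(Q_g))$, so $\mathcal A^{-1}=V^{-1}D$. The inverse of a Vandermonde matrix is classical: its entries are built from the Lagrange interpolation coefficients, which are precisely the signed elementary symmetric functions $(-1)^{g-l}\sigma_{g-l}^{(r)}$ of the remaining nodes $\{z(Q_i)\}_{i\neq r}$, divided by $\prod_{i\neq r}\bigl(z(Q_r)-z(Q_i)\bigr)$. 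Substituting this into $(\mathcal A^{-1}C)_{rs} = \sum_l (V^{-1})_{rl}\,w(Q_r)\,C_{ls}$ produces the stated closed form.

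**The main obstacle** I anticipate is purely bookkeeping rather than conceptual: pinning down the correct transpose in passing from $\partial\zeta_s/\partial z(Q_t)$ to $\partial z(Q_r)/\partial\zeta_s$, and matching the sign convention and the precise form of the inverse-Vandermonde entries so that the signed symmetric functions $(-1)^{g-l}\sigma_{g-l}^{(r)}$ come out with the indexing in the theorem. I would double-check by testing the genus $1$ case (where $\mathcal A$ and $C$ are scalars and the formula must reduce to $\partial z(Q_1)/\partial\zeta_1 = w(Q_1)C_{11}$) to confirm the orientation of all indices before asserting the general identity.
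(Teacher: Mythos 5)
Your proposal is correct and follows essentially the same route as the paper: differentiate the Abel--Jacobi sum against the polynomial basis (via the transition matrix $C$) to identify the Jacobian $\partial\zeta/\partial z(Q)$ as $C^{-1}\mathcal{A}$, invert it, and then compute $\mathcal{A}^{-1}$ explicitly through the diagonal-times-Vandermonde factorization and the Lagrange-interpolation form of the inverse Vandermonde matrix, which is precisely the paper's argument with $F_{r}(z)/F'\big(z(Q_{r})\big)$. The only blemishes are the ordering/transposition slips you yourself flag as the main pitfall (the expansion of $v_{s}$ should use $(C^{-1})_{sl}$, and $\mathcal{A}=VD^{-1}$ rather than $D^{-1}V$, so $\mathcal{A}^{-1}=DV^{-1}$); these do not affect your final formulas, which are stated correctly.
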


\begin{proof}
Define the function
\begin{equation}
F(z)=\prod_{r=1}^{g}\big(z-z(Q_{r})\big),\quad\mathrm{as\ well\ as}\quad F_{r}(z)=\tfrac{F(z)}{z-z(Q_{r})}=\sum_{p=0}^{g-1}(-1)^{p}\sigma_{p}^{(r)}(Q_{1},\ldots,Q_{g})z^{g-1-p} \label{polprod}
\end{equation}
for $1 \leq r \leq g$, where $\sigma_{p}^{(r)}(Q_{1},\ldots,Q_{g})$ is the elementary symmetric functions of order $p$ evaluated at the elements $\{z(Q_{i})\}_{i \neq r}$. Recall that the Abel--Jacobi images appearing in the formula for $\zeta$ are based on the integration of the dual basis $\{v_{s}\}_{s=1}^{g}$, and that the latter basis is taken to the basis $\big\{\frac{z^{l-1}}{w}\mathrm{d}z\big\}$ by the invertible matrix $C$. It follows that $\sum_{s=1}^{g}C_{ls}\zeta_{s}$ is a constant plus $\sum_{r=1}^{g}\int_{P_{\infty}}^{Q_{r}}\frac{z^{l-1}}{w}\mathrm{d}z$, and since $C$ does not depend on $z$, the derivative of the latter expression with respect to $z(Q_{r})$ is simply $\frac{z(Q_{r})^{l-1}}{w(Q_{r})}$. Multiplication by $C^{-1}$ implies that the matrix of derivatives of $\zeta=\sum_{r=1}^{g}u_{P_{\infty}}(Q_{r})+K_{P_{\infty}}$ with respect to the parameters $z(Q_{r})$s is $C^{-1}\mathcal{A}$. The description of the required derivatives in terms of the inverse matrix $\mathcal{A}^{-1}C$ is thus established, and since $\mathcal{A}$ is the product of a Vandermonde matrix and a diagonal matrix, we have an explicit formula for its inverse: The numerators $w(Q_{r})$ come from inverting the diagonal matrix, and in the $r$th row of the inverse of the Vandermonde matrix we have the entries of the polynomial of degree $g-1$ that vanishes at the points $z(Q_{i})$ for $i \neq r$ and sends $z(Q_{r})$ to 1. As this polynomials must be a multiple of $F_{r}$ from Equation \eqref{polprod}, and the multiplier must be the inverse of $F_{r}\big(z(Q_{r})\big)=F'\big(z(Q_{r})\big)$ (which is the asserted product), this yields the desired explicit expression. This proves the lemma.
\end{proof}

In view of what happens in the $Z_{3}$ curves case below, let us indicate the form of Lemma \ref{dQrdzetas} when each $Q_{r}$ lies near a branch point $P_{i}$ (and these branch points are distinct). Then we have the natural coordinate $t_{r}(Q_{r})$ in that neighborhood, which is defined by the equality
\begin{equation}
t_{r}^{2}=z(Q_{r})-\lambda_{i}\quad\mathrm{and\ also\ satisfies}\quad w_{r}=t_{r}\big(\sqrt{f'(\lambda_{i})}+O(t_{r}^{2})\big). \label{trzr2}
\end{equation}
Considering the derivatives of these coordinates in Lemma \ref{dQrdzetas}, a similar argument to the proof will multiply each column of $\mathcal{A}$ (or equivalently of $C^{-1}\mathcal{A}$) by $\frac{\partial z(Q_{r})}{\partial t_{r}}=2t_{r}$, which replaces the expression $\frac{z(Q_{r})^{l-1}}{w(Q_{r})}$ by $\frac{\lambda_{i}^{l-1}}{2\sqrt{f'(\lambda_{i})}}+O(t_{r}^{2})$. The branch point $P_{i}$ itself corresponds to $t_{r}=0$, which yields again a Vandermonde matrix, with the value at the $r$th column being $\lambda_{i}$ and the column itself being multiplied by $\frac{2}{\sqrt{f'(\lambda_{i})}}$. Hence $\frac{\partial t_{r}}{\partial\zeta_{s}}$ is given (when $\zeta$ is the image of the branch points themselves) by the expression from Lemma \ref{dQrdzetas}, but with the coefficient replaced by the non-zero multiplier $\frac{\sqrt{f'(\lambda_{i})}}{2F'(\lambda_{i})}$.

We can now prove the Thomae derivative formula for the hyper-elliptic case.
\begin{proof}[Proof of Theorem \ref{Thomae2Z2}]
We differentiate the quotient from Lemma \ref{thetaquotient} without the square. Note that the right hand side there is $\frac{\tilde{\epsilon}_{k}}{\sqrt{f'(\lambda_{k})}}F(\lambda_{k})$ in the notation of Equation \eqref{polprod}, and its derivative with respect to $z(Q_{r})$ is $\frac{-\tilde{\epsilon}_{k}}{\sqrt{f'(\lambda_{k})}}F_{r}(\lambda_{k})$. Lemma \ref{dQrdzetas} thus yields
\begin{equation}
\frac{\partial}{\partial\zeta_{s}}\frac{\theta[u_{P_{\infty}}(P_{k})]\big(\sum_{r=1}^{g}u_{P_{\infty}}(Q_{r})+K_{P_{\infty}},\tau\big)}{\theta\big(\sum_{r=1}^{g}u_{P_{\infty}}(Q_{r})+K_{P_{\infty}},\tau\big)}=
\frac{-\sqrt{\tilde{\epsilon}_{k}}}{2f'(\lambda_{k})^{1/4}\sqrt{F(\lambda_{k})}}\sum_{r=1}^{s}(\mathcal{A}^{-1}C)_{rs}F_{r}(\lambda_{k}). \label{derivative}
\end{equation}
Equation \eqref{derivative} clearly extends by continuity to the case where $z$ is not the local coordinate at some of the points $Q_{r}$ (i.e., where we use the coordinates $t_{r}(Q_{r})$ from Equation \eqref{trzr2}), as long as they remain distinct from $P_{\infty}$, from $P_{k}$, and from one another. In this case the coefficients $w_{r}$ from the expression for $(\mathcal{A}^{-1}C)_{rs}$ in Lemma \ref{dQrdzetas} will simply vanish for these $r$s. Considering the behavior of $\sqrt{F(\lambda_{k})}$, $w(Q_{m})$, and $F_{r}(\lambda_{k})$ for all $r$ as $Q_{m} \to P_{k}$ for some $1 \leq m \leq k$, one sees that the terms with $r \neq m$ vanish at this limit, and for the remaining term we get
\begin{equation}
\lim_{Q_{m} \to P_{k}}\frac{-\sqrt{\tilde{\epsilon}_{k}}}{2f'(\lambda_{k})^{1/4}\sqrt{F(\lambda_{k})}}\frac{w(Q_{m})}{F'\big(z(Q_{m})\big)}F_{m}(\lambda_{k})=\frac{\hat{\epsilon}_{k}f'(\lambda_{k})^{1/4}}{2\sqrt{F'(\lambda_{k})}} \label{QmtoPk}
\end{equation}
(all multiplied by $\sum_{l=1}^{g}(-1)^{g-l}\sigma_{g-l}^{(r)}(Q_{1},\ldots,Q_{g})C_{ls}$), for some 8th root of unity $\hat{\epsilon}_{k}$.

Take now our partition $\mathcal{I}_{1}\cup\mathcal{J}_{1}$ (with $\infty\not\in\mathcal{I}_{1}$), and assume that $k\not\in\mathcal{I}_{1}$ either. Then Equation \eqref{partition} shows that setting $\mathcal{I}_{0}=\mathcal{I}_{1}\cup\{k,\infty\}$ and $\mathcal{J}_{0}=\mathcal{J}_{1}\setminus\{k,\infty\}$ yields an admissible partition. We set the $Q_{r}$s to be the $P_{i}$s with $i\in\mathcal{I}_{1}\cup\{k\}=\mathcal{I}_{0}\setminus\{\infty\}$ (in some order), so that the roots of the functions from Equation \eqref{polprod} are at the points $\lambda_{i}$ with $i$ in that set. In this case Equation \eqref{transchar} relates the theta function from the numerator near that point to $\theta[e(\mathcal{I}_{1})](\zeta,\tau)$ near $\zeta=0$, and since the corresponding theta constant vanishes, one has to differentiate neither the denominator nor the exponential function from Equation \eqref{transchar} for this value. As we have $Q_{m}=P_{k}$ for some $m$, we apply Equation \eqref{QmtoPk}, and the expressions $\sigma_{g-l}^{(m)}(Q_{1},\ldots,Q_{g})$ for $1 \leq l \leq g$ with this $m$ are $\sigma_{g-l}(\mathcal{I}_{1})$ by definition. Observing that $F(z)$ is $\prod_{i\in\mathcal{I}_{1}\cup\{k,\infty\}}(z-\lambda_{i})$, and that the exponents from Equation \eqref{transchar} that relate the numerator to $\theta[e(\mathcal{I}_{1})](\zeta,\tau)$ near $\zeta=0$ and the denominator to $\theta[e(\mathcal{I}_{0})](0,\tau)$ cancel to a 4th root of unity $\mu$, Equations \eqref{derivative} and \eqref{QmtoPk} reduce to the equality
\begin{equation}
\frac{\partial\theta[e(\mathcal{I}_{1})](\zeta,\tau)}{\partial\zeta_{s}}=\mu\frac{\hat{\epsilon}_{k}\prod_{i\in\mathcal{J}_{0}}(\lambda_{k}-\lambda_{i})^{1/4}}{2\prod_{i\in\mathcal{I}_{1}}(\lambda_{k}-\lambda_{i})^{1/4}}
\cdot\theta[e(\mathcal{I}_{0})](0,\tau)\cdot\sum_{l=1}^{g}(-1)^{g-l}\sigma_{g-l}(\mathcal{I}_{1})C_{ls}. \label{tosub}
\end{equation}
We evaluate $\theta[e(\mathcal{I}_{0})](0,\tau)$ using Theorem \ref{Thomae1Z2}, and note that the product in the numerator of Equation \eqref{tosub} is $\frac{\Delta(\mathcal{J}_{1})^{1/4}}{\Delta(\mathcal{J}_{0})^{1/4}}$, and the one in the denominator there is $\frac{\Delta(\mathcal{I}_{0})^{1/4}}{\Delta(\mathcal{I}_{1})^{1/4}}$ (both up to 4th roots of unity). Since the extra factor of 2 from the denominator in Equation \eqref{tosub} combines with the external coefficient from Theorem \ref{Thomae1Z2} to the asserted one, we may gather the 8th roots of unity, whose construction did not depend on $s$, to $\epsilon_{\mathcal{I}_{1}}$, which completes the proof of the theorem.
\end{proof}

The extension of Equation \eqref{derivative} to the case where a point $Q_{r}$ tends to a branch point $P_{i}$ on $X$ can also be explained in terms of the coordinate $t_{r}(Q_{r})$, where differentiating $\lambda_{k}-z(Q_{r})$ yields also a multiplier of $t_{r}$, which vanishes at $t_{r}=0$ (as we have seen in the proof above). On the other hand, when $Q_{m} \to P_{k}$ we have a multiplier of $t_{m}(Q_{m})$ in $\sqrt{F(\lambda_{k})}$ in the denominator, which cancels with this factor of $t_{m}$ and indeed yields at $t_{m}=0$ the finite, non-zero value from Equation \eqref{QmtoPk}.

Considering the $g$ subsets $\mathcal{I}_{1}^{(k)}$ of cardinality $g-1$ inside a set $\mathcal{I}_{0}\setminus\{\infty\}$ for $\mathcal{I}_{0}$ as in Equation \eqref{partition} (with the respective complements $\mathcal{J}_{1}^{(k)}$), we may write Theorem \ref{Thomae2Z2} in matrix form as
\begin{equation}
\frac{\partial\big(\theta[e(\mathcal{I}_{1}^{(1)})],\ldots,\theta[e(\mathcal{I}_{1}^{(g)})]\big)}{\partial(\zeta_{1},\ldots,\zeta_{g})}
\bigg|_{\zeta=0}=\bigg(\frac{\det C}{2^{g+2}\pi^{g}}\bigg)^{1/2}\mathcal{D}\Sigma C \label{matrixthomae2}
\end{equation}
(i.e., $\frac{\partial}{\partial\zeta_{s}}\theta[e(\mathcal{I}_{1}^{(k)})](\zeta,\tau)\big|_{\zeta=0}$ is $\big(\frac{\det C}{2^{g+2}\pi^{g}}\big)^{1/2}$ times the $ks$-entry of $\mathcal{D}\Sigma C$), where \[\Sigma_{kl}=(-1)^{g-l}\sigma_{g-l}(\mathcal{I}_{1}^{(k)})=(-1)^{g-l}\sigma_{g-l}^{(k)}(\mathcal{I}_{0})\quad\mathrm{and}\quad
\mathcal{D}_{ki}=\delta_{ki}\epsilon_{\mathcal{I}_{1}^{(k)}}\Delta(\mathcal{I}_{1}^{(k)})^{1/4}\Delta(\mathcal{J}_{1}^{(k)})^{1/4}\] (i.e.,
$\mathcal{D}$ is an invertible diagonal matrix, and $\Sigma$ is also invertible, with determinant $\Delta(I_{0})$, as an inverse Vandermonde matrix with the rows multiplied by appropriate scalars). Equation \eqref{matrixthomae2} resolves the ``effectivization'' problem for hyper-elliptic curves, mentioned in the Introduction: The columns of the normalizing matrix $C^{-1}$ for the holomorphic differentials are expressible in terms of theta constants. In addition, taking the determinant of the matrix from Equation \eqref{matrixthomae2} one can derive the \emph{Riemann--Jacobi derivative formula} for the hyper-elliptic curve (see, e.g., \cite{[F2]}). These applications are examples of the importance of the Thomae derivative formula, and can be seen as motivations for generalizing this formula for other cases, like the case of non-singular trigonal curves considered in what follows.

Theorem \ref{Thomae2Z2} also extends to the case where the set $\mathcal{I}_{1}$ contains $\infty$, with the same $\Delta$ terms, but with $\sigma_{g-l}(\mathcal{I}_{1})$ replaced by $\sigma_{g-l-1}(\mathcal{I}_{1}\setminus\{\infty\})$ for every $1 \leq g \leq l$ (in particular, the latter expression vanishes for $l=g$). To see this, we apply Lemma \ref{thetaquotient} with $k\not\in\mathcal{I}_{1}$ such that an appropriate choice gives us essentially $\frac{\theta[e(\tilde{\mathcal{I}}_{1})](\zeta,\tau)}{\theta[e(\mathcal{I}_{1})](\zeta,\tau)}$ near $\zeta=0$ for $\tilde{\mathcal{I}}_{1}=\mathcal{I}_{1}\cup\{k\}\setminus\{\infty\}$, and substitute the solution to the Jacobi inversion problem for $\zeta$ being a small number $\delta$ times the $s$-th standard vector. It involves $Q_{h}=P_{\infty}$ for some $1 \leq h \leq g$, so that the corresponding coordinate has to be inverted, and we expand all the terms in powers of $\delta$ (note that the terms $\lambda_{k}-z(Q_{m})$ and $1/z(Q_{h})$ are quadratic in $\delta$, since the first derivatives like $(\mathcal{A}^{-1}C)_{ts}$ will vanish as well). Since the two theta constants vanish, we get a quotient of theta derivatives, and as $\frac{\partial}{\partial\zeta_{s}}\theta[e(\tilde{\mathcal{I}}_{1})](\zeta,\tau)\big|_{\zeta=0}$ is given by Theorem \ref{Thomae2Z2}, an elementary calculation proves the desired result. We also remark that when $f$ has degree $2g+2$ in Equation \eqref{curvehyperel} (rather than $2g+1$), and the base point $P_{0}$ has a finite $z$-value $\lambda_{0}$, some additional terms enter the proof of Theorem \ref{Thomae2Z2}, but its final result remains the same (regardless of whether the index 0 of the base point lies in $\mathcal{I}_{1}$ or not). We end this section by noting that these methods can be used for determining similar expressions for the derivatives of the $m$th order of the theta function associated with the partition $\mathcal{I}_{m}\cup\mathcal{I}_{m}$ from Equation \eqref{partition} for any $m$---see \cite{[Be]}.

\section{Thomae's Formula for Nonsignular $Z_{3}$ Curves \label{Deg3Basic}}

In this section we recall the usual Thomae formula for cyclic nonsingular covers of order 3, and prove some preliminaries that will be required later. Let $X$ be the complete curve given by the equation
\begin{equation}
w^{3}=\prod_{i=1}^{3q-1}(z-\lambda_{i}),\mathrm{\ with\ }\lambda_{i}\neq\lambda_{j}\mathrm{\ when\ }i \neq j. \label{Z3curve}
\end{equation}
which is a compact Riemann surface of genus $g=3q-2$. The space of holomorphic differentials on $X$ is described in \cite{[N]}, \cite{[FZ]}, \cite{[Z]}, \cite{[KZ1]} and others as follows.
\begin{prop}
A basis for the holomorphic differentials on $X$ is given by \[\tfrac{z^{l-1}\mathrm{d}z}{w^{2}}\mathrm{\ with\ }1 \leq l\leq2q-1\quad\mathrm{and}\quad\tfrac{z^{l-2q-2}\mathrm{d}z}{w}\mathrm{\ with\ }2q \leq l \leq 3q-2=g.\] \label{holdif}
\end{prop}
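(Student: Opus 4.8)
The plan is to reduce the statement to two independent checks: that each of the $3q-2$ listed forms is \emph{holomorphic} on $X$, and that they are \emph{linearly independent}. Since their number equals the genus $g=3q-2$, which is $\dim_{\mathbb{C}}H^{0}(X,\Omega^{1})$, these two facts together force the collection to be a basis. Independence is the easy half: the function field of $X$ is $\mathbb{C}(z)[w]/\big(w^{3}-\prod_{i}(z-\lambda_{i})\big)$, free of rank $3$ over $\mathbb{C}(z)$ with basis $1,w,w^{2}$, so the forms of the first family (distinct powers of $z$ times $\mathrm{d}z/w^{2}$) and those of the second family (powers of $z$ times $\mathrm{d}z/w$) cannot enter any nontrivial $\mathbb{C}$-linear relation.

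The substance is holomorphicity, which I would establish by computing the local order of each form at every point of $X$ and checking that it is nonnegative. First I would pin down the ramification of $z\colon X\to\mathbb{CP}^{1}$, a map of degree $3$. Over each $\lambda_{i}$ the factor $z-\lambda_{i}$ has a simple zero while the remaining factors are a unit, so the fibre is totally ramified; and since $3q-1\equiv 2\pmod 3$ the fibre over $\infty$ is totally ramified as well. Riemann--Hurwitz with these $3q$ totally ramified points gives $2g-2=-6+2\cdot 3q$, i.e. $g=3q-2$, confirming the genus and the count.

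Next I would record the local data at the three relevant kinds of points. At a finite branch point $P_{i}$ a uniformizer is $w$ itself, so $\mathrm{ord}_{P_{i}}(w)=1$, $\mathrm{ord}_{P_{i}}(z-\lambda_{i})=3$, and $\mathrm{ord}_{P_{i}}(\mathrm{d}z)=2$. At an unramified point over a finite $z_{0}\notin\{\lambda_{i}\}$ (in particular over $z_{0}=0$) the coordinate is $z-z_{0}$, so $w$ and $\mathrm{d}z$ have order $0$ while a negative power of $z$ contributes a pole only over $z_{0}=0$. At $P_{\infty}$, total ramification gives $\mathrm{ord}_{P_{\infty}}(z)=-3$, hence $\mathrm{ord}_{P_{\infty}}(\mathrm{d}z)=-4$, and $w^{3}\sim z^{3q-1}$ yields $\mathrm{ord}_{P_{\infty}}(w)=-(3q-1)$. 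With these in hand the orders are immediate: for $z^{l-1}\mathrm{d}z/w^{2}$ one finds order $0$ at each finite branch point, order $l-1\ge 0$ over $z_{0}=0$, and order $3(2q-1-l)\ge 0$ at $P_{\infty}$; the $\mathrm{d}z/w$ family is treated in exactly the same fashion, its order being $1$ at a finite branch point, a nonnegative quantity at $P_{\infty}$, and equal to the $z$-exponent at each point over $z_{0}=0$.

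I expect the two pressure points to be the fibre over $z=0$ and the point at infinity, and it is precisely these that fix the admissible exponent ranges: nonnegativity at $P_{\infty}$, where the negative orders of $z$, $w$, and $\mathrm{d}z$ accumulate, bounds the exponents from above, while holomorphicity over $z=0$, where $z^{a}\,\mathrm{d}z/w$ has order exactly $a$, bounds them from below. The only genuinely delicate computation is reading off $\mathrm{ord}_{P_{\infty}}(w)$ and $\mathrm{ord}_{P_{\infty}}(\mathrm{d}z)$ correctly from the total ramification at $\infty$; once both families are verified to have nonnegative order at all three types of points, holomorphicity follows, and combined with the independence argument and the dimension count this completes the proof.
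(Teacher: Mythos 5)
Your framework is exactly the ``divisor computation'' that the paper itself invokes (its proof is a one-line appeal to precisely this), and your local data are all correct: $\mathrm{ord}_{P_{i}}(w)=1$, $\mathrm{ord}_{P_{i}}(z-\lambda_{i})=3$, $\mathrm{ord}_{P_{i}}(\mathrm{d}z)=2$, $\mathrm{ord}_{P_{\infty}}(z)=-3$, $\mathrm{ord}_{P_{\infty}}(w)=-(3q-1)$, $\mathrm{ord}_{P_{\infty}}(\mathrm{d}z)=-4$, as is the independence argument and the count $2q-1+q-1=g$. The genuine gap is that the one step that actually needed arithmetic is asserted rather than performed: you never substitute the \emph{stated} exponent range of the second family into your own lower bound. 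The proposition's second family has $z$-exponent $a=l-2q-2$ with $2q \leq l \leq 3q-2$, i.e.\ $-2 \leq a \leq q-4$. By your own (correct) computation the order of $z^{a}\,\mathrm{d}z/w$ over $z=0$ equals $a$ (or $3a+1$ at a branch point over $0$), so for $l=2q$ and $l=2q+1$ the forms $z^{-2}\mathrm{d}z/w$ and $z^{-1}\mathrm{d}z/w$ have genuine poles over $z=0$ and are not holomorphic at all. Your closing sentence ``once both families are verified\dots holomorphicity follows'' therefore completes a verification that in fact fails for the statement as printed.

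What the failed check reveals is that the exponent in the proposition is a misprint: the second family should be $z^{l-2q}\,\mathrm{d}z/w$, $2q \leq l \leq 3q-2$, i.e.\ exponents $0 \leq a \leq q-2$, which is exactly the window cut out by your two bounds ($a\geq0$ from the fibre over $0$, and $\mathrm{ord}_{P_{\infty}}=3q-5-3a\geq0$, i.e.\ $a \leq q-2$, from infinity); the printed range is this admissible range shifted down by $2$, so it both contains inadmissible exponents and omits admissible ones, and even after discarding the bad forms the list would be two short of a basis. The corrected normalization is also the one the paper actually uses downstream: in Lemma \ref{pardersmult} the coefficients $\sigma^{(r)}_{3q-2-l}$ pair with $z^{l-2q}$ to build the degree-$(q-2)$ polynomials $F_{-,r}$ (an inverse-Vandermonde structure that would acquire spurious factors $\lambda_{i}^{\pm2}$ under the printed exponent). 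With that correction your proof goes through verbatim; as written, it proves neither the literal statement (which is false) nor flags the discrepancy, which is precisely the point a divisor computation is supposed to catch.
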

Proposition \ref{holdif} is easily proved by divisor computations. The normalization of $l$ in the second part will be clearer in Lemma \ref{pardersmult} below and afterwards. To explain the Thomae formula for $Z_{3}$ curves of this type one first has to consider the divisors for which this formula can be defined. Let $P_{i} \in X$ the be branch point lying over $\lambda_{i}$ for $1 \leq i\leq3q-1$, denote once again the branch point over $\infty$ by $P_{\infty}$, and consider (in correspondence with the case $m=0$ in Equation \eqref{partition}) a partition $\mathbf{\Lambda}$ of this set of indices (including $\infty$) into 3 sets $\Lambda_{0}$, $\Lambda_{1}$, and $\Lambda_{2}$, all of which having the same cardinality $q$, and with $\infty\in\Lambda_{2}$. To $\mathbf{\Lambda}$ we associate, as in Equation \eqref{charIm}, the characteristic
\begin{equation}
e_{\mathbf{\Lambda}}=\sum_{i\in\Lambda_{1}}u_{P_{\infty}}(P_{i})+2\sum_{i\in\Lambda_{2}}u_{P_{\infty}}(P_{i})+K_{P_{\infty}}, \label{charLambda}
\end{equation}
where $K_{P_{\infty}}$ is once again the vector of Riemann constants. We set $\Delta(\Lambda_{0})$, $\Delta(\Lambda_{1})$, and $\Delta(\Lambda_{2})$ to be as in Equation \eqref{deltas}, and for two different sets we define
\begin{equation}
\Delta(\Lambda_{0},\Lambda_{1})=\prod_{i\in\Lambda_{0}}\prod_{j\in\Lambda_{1}}(\lambda_{i}-\lambda_{j}),\mathrm{\ and\ similarly\ for\ }\Delta(\Lambda_{1},\Lambda_{2})\mathrm{\ and\ }\Delta(\Lambda_{2},\Lambda_{0}) \label{Delta2}
\end{equation}
(with the index $\infty$ excluded wherever $\Lambda_{2}$ is involved). Once again we define $C$ and $\tau$ as in Equation \eqref{matrices} (with the basis used for $C$ being now the one from Proposition \ref{holdif}), and we cite the following theorem from \cite{[N]}, \cite{[BR2]}, \cite{[Ko]}, and others.
\begin{thm}
The theta constant $\theta[e_{\mathbf{\Lambda}}]$ is non-vanishing, and its value is described by \[\theta[e_{\mathbf{\Lambda}}]=\alpha\epsilon_{\mathbf{\Lambda}}\sqrt{\det C}\cdot\Delta(\Lambda_{0})^{1/2}\Delta(\Lambda_{1})^{1/2}\Delta(\Lambda_{2})^{1/2}\Delta(\Lambda_{0},\Lambda_{1})^{1/6}\Delta(\Lambda_{1},\Lambda_{2})^{1/6}\Delta(\Lambda_{2},\Lambda_{0})^{1/6}.\] Here $\alpha$ is a global constant on the moduli space of such $Z_{3}$ curves, and $\epsilon_{\mathbf{\Lambda}}$ is a 12th root of unity that depends only on the partition $\mathbf{\Lambda}$. \label{Thomae1Z3}
\end{thm}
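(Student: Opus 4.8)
The plan is to follow the strategy of the hyper-elliptic case above, adapted to the order-$3$ ramification. First I would record that the theta constant is non-vanishing: by the Riemann Vanishing Theorem the order of vanishing of $\theta$ at $e_{\mathbf{\Lambda}}$ is governed by the index of speciality of the divisor $D=\sum_{i\in\Lambda_{1}}P_{i}+2\sum_{i\in\Lambda_{2}}P_{i}$, whose degree equals $g$ once the vanishing contribution of $\infty\in\Lambda_{2}$ is taken into account, and a divisor computation using the explicit basis of Proposition \ref{holdif} shows that $D$ is non-special precisely when the three sets share the common cardinality $q$. Hence $\theta[e_{\mathbf{\Lambda}}]\neq0$, and the right hand side is the quantity to be matched.

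The main engine would be a family of theta quotients in the spirit of Lemma \ref{thetaquotient}. Given two partitions $\mathbf{\Lambda}$ and $\mathbf{\Lambda}'$ differing by the transfer of a single branch point from one of the three sets to another, I would form the ratio $\theta[e_{\mathbf{\Lambda}'}]/\theta[e_{\mathbf{\Lambda}}]$ and realize it as a limiting value of a meromorphic theta quotient on $X^{g}$, whose divisor is read off from the Riemann Vanishing Theorem and then written explicitly through the polynomial differentials $z^{l-1}\mathrm{d}z/w^{2}$ and $z^{l-2q-2}\mathrm{d}z/w$ of Proposition \ref{holdif}. Because the local parameter at a branch point satisfies $t^{3}=z-\lambda_{i}$, every factor $(\lambda_{i}-\lambda_{j})$ entering such a ratio acquires an exponent that is a multiple of $\tfrac{1}{6}$; this is what ultimately produces the within-set powers $\Delta(\Lambda_{a})^{1/2}$ and the cross powers $\Delta(\Lambda_{a},\Lambda_{b})^{1/6}$ of the statement. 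I would also exploit the automorphism $(z,w)\mapsto(z,\zeta_{3}w)$, which cyclically permutes $\Lambda_{0},\Lambda_{1},\Lambda_{2}$ and acts on the Jacobian: the induced symmetry of $\theta$ forces the three within-set factors, and likewise the three cross factors, to occur on an equal footing, which both reduces the number of independent ratios to compute and fixes several of the ambiguous roots of unity.

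Assembling these ratios pins down $\theta[e_{\mathbf{\Lambda}}]$ up to a factor that is \emph{independent of the partition}, namely $\alpha\epsilon_{\mathbf{\Lambda}}\sqrt{\det C}$. The factor $\sqrt{\det C}$ genuinely varies with the moduli $\lambda_{i}$ and therefore cannot be detected by ratios, so I would isolate it by a variational argument. Combining the heat equation for $\theta$ with the Rauch variational formula for $\partial\tau/\partial\lambda_{i}$ gives $\partial\log\theta[e_{\mathbf{\Lambda}}]/\partial\lambda_{i}$, which I would match against the logarithmic derivative of the explicit product on the right hand side. Once the $\Delta$-contributions are accounted for, the residual statement is that $\partial\log\big(\theta[e_{\mathbf{\Lambda}}]/\sqrt{\det C}\big)/\partial\lambda_{i}$ vanishes identically, so the leftover factor $\alpha$ is a true constant on the moduli space, as asserted.

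As in the hyper-elliptic case, one hard part is the bookkeeping of the fractional exponents and roots of unity, now complicated by the three-fold structure: one must check that the exponents assembled from the various single-point transfers are mutually consistent---this is exactly where the cyclic symmetry is indispensable---and that they collapse to precisely $1/2$ and $1/6$, while verifying that $\epsilon_{\mathbf{\Lambda}}$ is a $12$th root of unity depending only on $\mathbf{\Lambda}$. The genuinely delicate step, however, is the variational argument isolating $\sqrt{\det C}$: relating $\partial\log\theta[e_{\mathbf{\Lambda}}]/\partial\lambda_{i}$ to $\partial\log\det C/\partial\lambda_{i}$ rests on the non-vanishing already established together with a careful evaluation of the second-order theta data at the even characteristic $e_{\mathbf{\Lambda}}$, and I expect the bulk of the effort to concentrate there.
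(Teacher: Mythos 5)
First, a point about the comparison itself: the paper does \emph{not} prove Theorem \ref{Thomae1Z3}. It is imported wholesale (``we cite the following theorem from \cite{[N]}, \cite{[BR2]}, \cite{[Ko]}, and others''), and all of the paper's own degree-3 arguments (Proposition \ref{quotforZ3}, Theorems \ref{Thomae2Z3t1} and \ref{Thomae2Z3t2}) take it as input. So your attempt cannot be measured against an internal proof; it has to be judged as a reconstruction of the proofs in the cited literature. On that score the outline has the right shape --- non-vanishing from non-speciality of the branch-point divisor, single-transfer theta quotients in the spirit of Lemma \ref{thetaquotient} (this is the route of \cite{[EiF]}, \cite{[FZ]}, \cite{[Z]}), and a variational principle (the route of \cite{[F1]}, \cite{[MT]}) to capture the moduli-dependent constant --- but two of your steps are genuinely problematic.

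The first is the automorphism argument: $(z,w)\mapsto(z,\rho w)$ with $\rho=\mathbf{e}(1/3)$ fixes every branch point, hence fixes every divisor supported on branch points; it does not permute $\Lambda_{0},\Lambda_{1},\Lambda_{2}$ in any partition. The symmetry that actually relates rotated partitions is arithmetic of the characteristics --- for instance $\sum_{i}u_{P_{\infty}}(P_{i})=0$ because $\mathrm{div}(w)=\sum_{i}P_{i}-(3q-1)P_{\infty}$, combined with the evenness of $\theta$ under $e\mapsto-e$ --- and if you insist on using the induced action on $J(X)$ you must face the fact that this automorphism does not preserve the symplectic homology basis, so ``the induced symmetry of $\theta$'' goes through the theta transformation formula with its own nontrivial root-of-unity bookkeeping. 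The second, more serious, gap is that the quotient machinery by itself only yields the ``poor man's Thomae'': the statement that $\theta[e_{\mathbf{\Lambda}}]$ divided by the product of $\Delta$-factors is independent of the partition. The entire content of the prefactor $\alpha\sqrt{\det C}$ lives in the variational step, and there your sketch stops exactly where the work begins: the heat equation plus Rauch's formula reduces $\partial\log\theta[e_{\mathbf{\Lambda}}]/\partial\lambda_{i}$ to the second-order logarithmic derivative $\sum_{j,k}v_{j}(P_{i})v_{k}(P_{i})\,\partial^{2}_{jk}\theta/\theta$ evaluated at $e_{\mathbf{\Lambda}}$, and this quantity must then actually be computed (say, from expansions of quotients like the one in Proposition \ref{quotforZ3} near the branch points) and matched against $\partial\log\det C/\partial\lambda_{i}$, which itself requires a Rauch-type formula for the non-normalized periods $C$. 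Without these evaluations the claim that the residual logarithmic derivative vanishes is an assertion, not a proof; as written, the proposal is a plausible research program rather than a complete argument.
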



\smallskip

Our proof of Theorem \ref{Thomae2Z2} involved Lemma \ref{dQrdzetas}, namely evaluating the matrix of derivatives of the $z$-values of the solution to the Jacobi problem in terms of the local variable $\zeta=\sum_{r=1}^{g}u_{P_{\infty}}(Q_{r})+K_{P_{\infty}}$, in terms of inverting the matrix of the derivatives $\frac{\partial\zeta_{s}}{\partial z(Q_{r})}$. At that point we have ignored the fact that the natural space on which this $\zeta$ is defined is not the $g$-fold product $X^{g}$, but rather the symmetric power $\mathrm{Sym}^{g}X$. As the projection of the former onto the latter is a local diffeomorphism when the values $z(Q_{r})$, $1 \leq r \leq g$ are all distinct, this did not affect the result in the hyper-elliptic case. On the other hand, here the points $P_{i}$ with $i\in\Lambda_{2}$ appear with multiplicity 2 in the characteristic appearing in Equation \eqref{charLambda}, so we do have to consider the coordinates on $\mathrm{Sym}^{g}X$, or more precisely the derivatives with respect to them.

The general result with multiplicities is given in terms of symmetric functions, but here, with multiplicity 2, we can show in an elementary manner how these derivatives work. Let $\varphi$ be a \emph{symmetric} function of two variables, namely it satisfies $\varphi(z,w)=\varphi(w,z)$ for every $z$ and $w$. Then $\varphi$ can be expressed via the variables $u=z+w$ and $v=\frac{1}{2}(z-w)^{2}$ (typically one takes $u$ and $p=zw$, but for our purposes $v=\frac{u^{2}-p}{2}$ will be more useful), namely $\varphi(z,w)=\phi(u,v)$ with these $u$ and $v$. In this case we have the following result.
\begin{lem}
For $\varphi(t,s)=\phi(\alpha,\beta)$ with $\alpha=t+s$ and $\beta=(t-s)^{2}\big/2$ we have
\[\tfrac{\partial\phi}{\partial\alpha}=\tfrac{1}{2}\big(\tfrac{\partial\varphi}{\partial t}+\tfrac{\partial\varphi}{\partial s}\big),\quad\mathrm{while\ }\tfrac{\partial\phi}{\partial\beta}\mathrm{\ is\ }\tfrac{1}{2(t-s)}\big(\tfrac{\partial\varphi}{\partial t}-\tfrac{\partial\varphi}{\partial s}\big)\mathrm{\ if\ }t \neq s\mathrm{\ and\ }\tfrac{1}{2}\big(\tfrac{\partial^{2}\varphi}{\partial t^{2}}-\tfrac{\partial^{2}\varphi}{\partial t\partial s}\big)\mathrm{\ when\ }t=s.\] \label{symfuncder}
\end{lem}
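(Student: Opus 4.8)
The plan is to establish the formulae for $\partial\phi/\partial\alpha$ and $\partial\phi/\partial\beta$ directly by the chain rule, treating the cases $t\neq s$ and $t=s$ separately. Writing $\varphi(t,s)=\phi(\alpha(t,s),\beta(t,s))$ with $\alpha=t+s$ and $\beta=(t-s)^2/2$, the chain rule gives
\[\frac{\partial\varphi}{\partial t}=\frac{\partial\phi}{\partial\alpha}\cdot\frac{\partial\alpha}{\partial t}+\frac{\partial\phi}{\partial\beta}\cdot\frac{\partial\beta}{\partial t}=\frac{\partial\phi}{\partial\alpha}+(t-s)\frac{\partial\phi}{\partial\beta},\]
and symmetrically $\partial\varphi/\partial s=\partial\phi/\partial\alpha-(t-s)\partial\phi/\partial\beta$, since $\partial\alpha/\partial s=1$ and $\partial\beta/\partial s=-(t-s)$. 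Adding and subtracting these two identities immediately yields $\partial\phi/\partial\alpha=\frac{1}{2}(\partial\varphi/\partial t+\partial\varphi/\partial s)$ and, provided $t\neq s$ so that we may divide, $\partial\phi/\partial\beta=\frac{1}{2(t-s)}(\partial\varphi/\partial t-\partial\varphi/\partial s)$. This disposes of the first two assertions.

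The only genuine subtlety is the diagonal case $t=s$, where the expression $\frac{1}{2(t-s)}(\partial\varphi/\partial t-\partial\varphi/\partial s)$ is an indeterminate $0/0$: because $\varphi$ is symmetric, $\partial\varphi/\partial t$ and $\partial\varphi/\partial s$ coincide when $t=s$, so the numerator vanishes along the diagonal. The plan is to recover the value of $\partial\phi/\partial\beta$ at $t=s$ by taking the limit of the off-diagonal formula, which amounts to differentiating the numerator $\partial\varphi/\partial t-\partial\varphi/\partial s$ with respect to the direction transverse to the diagonal and comparing against the derivative of the denominator $2(t-s)$. Concretely, I would fix $\alpha$ (equivalently set $s=\alpha-t$) and let $t\to s$ along this line; then both numerator and denominator are smooth functions of $t$ vanishing at $t=s$, and L'Hôpital (or a first-order Taylor expansion) reduces the quotient to $\frac{1}{2}\frac{\partial}{\partial t}\big(\partial\varphi/\partial t-\partial\varphi/\partial s\big)$ evaluated at the diagonal, with the $s$-dependence understood through $s=\alpha-t$.

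Carrying out that transverse differentiation is the one step requiring care: differentiating $\partial\varphi/\partial t$ along $t\mapsto(t,\alpha-t)$ produces $\partial^2\varphi/\partial t^2-\partial^2\varphi/\partial t\partial s$, while differentiating $\partial\varphi/\partial s$ similarly produces $\partial^2\varphi/\partial s\partial t-\partial^2\varphi/\partial s^2$; the denominator contributes a factor $2\cdot(-1)$ from differentiating $t-s=2t-\alpha$. Invoking the symmetry of $\varphi$, which forces $\partial^2\varphi/\partial t^2=\partial^2\varphi/\partial s^2$ and $\partial^2\varphi/\partial t\partial s=\partial^2\varphi/\partial s\partial t$ on the diagonal, the four second-order terms collapse to twice $\big(\partial^2\varphi/\partial t^2-\partial^2\varphi/\partial t\partial s\big)$, and after dividing by the factor from the denominator one is left with exactly $\frac{1}{2}\big(\partial^2\varphi/\partial t^2-\partial^2\varphi/\partial t\partial s\big)$, as asserted. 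I expect this limiting computation on the diagonal to be the main obstacle, mostly as a matter of bookkeeping the signs and the chain-rule factors correctly; everything else is a routine application of the chain rule.
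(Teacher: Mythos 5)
Your treatment of the first two formulae coincides with the paper's: the same chain-rule identities $\frac{\partial\varphi}{\partial t}=\frac{\partial\phi}{\partial\alpha}+(t-s)\frac{\partial\phi}{\partial\beta}$ and $\frac{\partial\varphi}{\partial s}=\frac{\partial\phi}{\partial\alpha}-(t-s)\frac{\partial\phi}{\partial\beta}$, added and subtracted. For the diagonal case $t=s$, however, you take a genuinely different route. The paper never passes to a limit: it differentiates the chain-rule identities once more, obtains the exact pointwise identity $\frac{\partial^{2}\varphi}{\partial t^{2}}+\frac{\partial^{2}\varphi}{\partial s^{2}}-2\frac{\partial^{2}\varphi}{\partial t\partial s}=4\frac{\partial\phi}{\partial\beta}+4(t-s)^{2}\frac{\partial^{2}\phi}{\partial\beta^{2}}$, evaluates it at $t=s$ where the last term vanishes, and uses the symmetry of $\varphi$ to replace $\frac{\partial^{2}\varphi}{\partial s^{2}}$ by $\frac{\partial^{2}\varphi}{\partial t^{2}}$. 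You instead recover $\frac{\partial\phi}{\partial\beta}$ on the diagonal as the limit of the off-diagonal formula along the transverse line $s=\alpha-t$, via L'H\^{o}pital. Both are valid under the lemma's implicit smoothness hypothesis on $\phi$; the paper's argument is purely algebraic and pointwise, while yours additionally uses that $\frac{\partial\phi}{\partial\beta}$, as a function of $(\alpha,\beta)$, is continuous up to $\beta=0$, so that the limit you compute is indeed the value you want. This is automatic when $\phi$ is $C^{2}$ (or holomorphic, as in the paper's application), but it is an extra step your write-up leaves tacit.

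There is one slip in the bookkeeping, in exactly the step you flagged as the main obstacle. Along the line $s=\alpha-t$ one has $t-s=2t-\alpha$, so the denominator $2(t-s)$ has $t$-derivative $2\cdot2=4$, not $2\cdot(-1)$ as you state. Your numerator computation is correct and collapses on the diagonal to $2\bigl(\frac{\partial^{2}\varphi}{\partial t^{2}}-\frac{\partial^{2}\varphi}{\partial t\partial s}\bigr)$; dividing by $4$ gives the asserted $\frac{1}{2}\bigl(\frac{\partial^{2}\varphi}{\partial t^{2}}-\frac{\partial^{2}\varphi}{\partial t\partial s}\bigr)$, whereas dividing by your stated factor $-2$ would give $-\bigl(\frac{\partial^{2}\varphi}{\partial t^{2}}-\frac{\partial^{2}\varphi}{\partial t\partial s}\bigr)$, contradicting your own conclusion. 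So your final formula is right only because you in effect divided by $4$ after all; the stated denominator factor should be corrected.
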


\begin{proof}
The chain rule gives
\[\tfrac{\partial\varphi}{\partial t}=\tfrac{\partial\phi}{\partial\alpha}+(t-s)\tfrac{\partial\phi}{\partial\beta}\quad\mathrm{and}\quad\tfrac{\partial\varphi}{\partial s}=\tfrac{\partial\phi}{\partial\alpha}-(t-s)\tfrac{\partial\phi}{\partial\beta},\] from which the first two asserted formulae follow. Taking the second derivatives shows that $\frac{\partial^{2}\varphi}{\partial t^{2}}$, $\frac{\partial^{2}\varphi}{\partial s^{2}}$, and $\frac{\partial^{2}\varphi}{\partial t\partial s}$ equal, respectively,
\[\tfrac{\partial^{2}\phi}{\partial\alpha^{2}}+2(t-s)\tfrac{\partial^{2}\phi}{\partial\alpha\partial\beta}+\tfrac{\partial\phi}{\partial\beta}+(t-s)^{2}\tfrac{\partial^{2}\phi}{\partial\beta^{2}},\quad
\tfrac{\partial^{2}\phi}{\partial\alpha^{2}}-2(t-s)\tfrac{\partial^{2}\phi}{\partial\alpha\partial\beta}+\tfrac{\partial\phi}{\partial\beta}+(t-s)^{2}\tfrac{\partial^{2}\phi}{\partial\beta^{2}},\] and
\[\tfrac{\partial^{2}\phi}{\partial\alpha^{2}}-\tfrac{\partial\phi}{\partial\beta}-(t-s)^{2}\tfrac{\partial^{2}\phi}{\partial\beta^{2}},\quad\mathrm{so\ that}\quad\tfrac{\partial^{2}\varphi}{\partial s^{2}}+\tfrac{\partial^{2}\varphi}{\partial t^{2}}-2\tfrac{\partial^{2}\varphi}{\partial s\partial t}=4\tfrac{\partial\phi}{\partial\beta}+4(t-s)^{2}\tfrac{\partial^{2}\phi}{\partial\beta^{2}}.\] Since when $s=t$ the last term here vanishes, and $\frac{\partial\varphi}{\partial s}=\frac{\partial\varphi}{\partial t}$ and $\frac{\partial^{2}\varphi}{\partial s^{2}}=\frac{\partial^{2}\varphi}{\partial t^{2}}$ by the symmetry of $\varphi$, the remaining equality also follows. This proves the lemma.
\end{proof}

We can now obtain our equivalent of Lemma \ref{dQrdzetas}, which we shall prove with the coordinates $t_{r}(Q_{r})$, which now satisfy
\begin{equation}
t_{r}^{3}=z(Q_{r})-\lambda_{i}\quad\mathrm{as\ well\ as}\quad w(Q_{r})=t_{r}\big(f'(\lambda_{i})^{1/3}+O(t_{r}^{2})\big) \label{trzr3}
\end{equation}
(analogously to Equation \eqref{trzr2}) since we will consider these derivatives at the branch points. On the other hand, we shall assume that $\{Q_{r}\}_{r=1}^{2q-1}$ will be distinct branch points, but $Q_{r+2q-1}$ will be the same branch point as $Q_{r}$ for $1 \leq r \leq q-1$. Therefore the local coordinates on $\mathrm{Sym}^{g}X$ (on which our symmetric function $\zeta$ is defined) near our point will be
\begin{eqnarray}
\nonumber
\big\{\alpha_{r}=t_{r}(Q_{r})+t_{r}(Q_{r+2q-1})\big\}_{r=1}^{q-1},\qquad\mathrm{followed\ by}\qquad\{\alpha_{r}=t_{r}(Q_{r})\}_{r=q}^{2q-1}, \\
\mathrm{and\ then}\qquad\big\{\beta_{r}=\big(t_{r}(Q_{r})-t_{r}(Q_{r+2q-1})\big)^{2}\big/2\big\}_{r=1}^{q-1}. \label{coordinates}
\end{eqnarray}
We also define, in analogy with Equation \eqref{polprod}, the polynomials
\begin{equation}
F_{+}(z)=\prod_{r=1}^{2q-1}\big(z-z(Q_{r})\big)\quad\mathrm{and}\quad F_{-}(z)=\prod_{r=1}^{q-1}\big(z-z(Q_{r})\big),\quad\mathrm{with\ each\ }z(Q_{r})\mathrm{\ being\ some\ }\lambda_{i}, \label{twoprods}
\end{equation}
and with the coefficients involving $\sigma_{p}^{(r)}(Q_{1},\ldots,Q_{2q-1})$ and $\sigma_{p}^{(r)}(Q_{1},\ldots,Q_{q-1})$ as above. We can now prove the following result.
\begin{lem}
Write $\zeta=\sum_{r=1}^{g}u_{P_{\infty}}(Q_{r})$, and assume that each point $Q_{r}$ lies near a branch point $P_{i} \neq P_{\infty}$, such that the first $2q-1$ branch points are all distinct, while the remaining $q-1$ branch points form a subset of that set of branch points, ordered as above. Then the derivatives of the coordinates from Equation \eqref{coordinates} with respect to the $\zeta_{s}$s at the $\zeta$-image of our point are given by the formulae
\[\frac{\partial\alpha_{r}}{\partial\zeta_{s}}=\tfrac{f'(\lambda_{i})^{2/3}}{3F_{+}'(\lambda_{i})}\sum_{l=1}^{2q-1}(-1)^{2q-1-l}\sigma_{2q-1-l}^{(r)}(Q_{1},\ldots,Q_{2q-1})C_{ls}\quad\mathrm{for}\quad1 \leq r \leq2q-1\] and \[\frac{\partial\beta_{r}}{\partial\zeta_{s}}=\tfrac{2f'(\lambda_{i})^{1/3}}{3F_{-}'(\lambda_{i})}\sum_{l=2q}^{3q-2}(-1)^{3q-2-l}\sigma_{3q-2-l}^{(r)}(Q_{1},\ldots,Q_{q-1})C_{ls}\quad\mathrm{for}\quad2q \leq r \leq3q-2=g.\] \label{pardersmult}
\end{lem}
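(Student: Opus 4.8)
The strategy is to mirror the proof of Lemma \ref{dQrdzetas}, computing the Jacobian matrix of the map $\{z(Q_r)\} \mapsto \zeta$ and then inverting it, but now working on $\mathrm{Sym}^g X$ rather than $X^g$ and using the branch-point coordinates $t_r$ from Equation \eqref{trzr3} together with the symmetric coordinates $\alpha_r,\beta_r$ from Equation \eqref{coordinates}. The key input is that, as in the hyper-elliptic argument, the component $\sum_s C_{ls}\zeta_s$ equals a constant plus $\sum_r \int_{P_\infty}^{Q_r}\omega_l$, where $\omega_l$ is the $l$th differential from Proposition \ref{holdif}; differentiating with respect to the local parameter $t_r(Q_r)$ at a branch point and using $dz = 3t_r^2\,dt_r$ together with $w = t_r(f'(\lambda_i)^{1/3}+O(t_r^2))$ gives the leading behavior of each $\partial(\sum_s C_{ls}\zeta_s)/\partial t_r(Q_r)$. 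For the first block of differentials $z^{l-1}dz/w^2$, $1\le l\le 2q-1$, the factor $dz/w^2 = 3t_r^2\,dt_r/w^2$ is $O(dt_r)$ near $t_r=0$ (no pole, no vanishing to leading order), whereas for the second block $z^{l-2q-2}dz/w$, $2q\le l\le 3q-2$, the factor $dz/w = 3t_r^2\,dt_r/w$ vanishes to order $t_r$. This order-of-vanishing dichotomy is exactly what forces the two different formulae in the statement.

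First I would treat the nondegenerate directions $\alpha_r$. For $1\le r\le 2q-1$ the relevant partial derivatives couple only to the first block of differentials $1\le l\le 2q-1$: by Lemma \ref{symfuncder} the $\alpha$-derivative is $\tfrac12(\partial/\partial t_r(Q_r)+\partial/\partial t_r(Q_{r+2q-1}))$, and evaluating the first-block differentials via $z^{l-1}dz/w^2$ at $t_r=0$ gives a matrix that is, up to the diagonal scaling by $f'(\lambda_i)^{2/3}$ (coming from $w^2\sim t_r^2 f'(\lambda_i)^{2/3}$ cancelling $3t_r^2$), a Vandermonde matrix in the $2q-1$ distinct values $\lambda_i = z(Q_r)$. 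Inverting this $(2q-1)\times(2q-1)$ block exactly as in Lemma \ref{dQrdzetas} produces the polynomials $F_+$ and the symmetric functions $\sigma^{(r)}_{2q-1-l}(Q_1,\ldots,Q_{2q-1})$, with the reciprocal $1/F_+'(\lambda_i)$ and the stated factor of $1/3$.

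Next I would treat the $\beta_r$ directions for $2q\le r\le g$. Here Lemma \ref{symfuncder} gives $\partial/\partial\beta_r$ as a first-order difference quotient $\tfrac{1}{2(t-s)}(\partial_t-\partial_s)$, so the leading contribution comes from the \emph{odd-in-$t$} part of the differential's expansion, i.e. precisely the second block $z^{l-2q-2}dz/w$ whose integrand vanishes to order $t_r$: the factor $dz/w\sim 3t_r^2\,dt_r/(t_r f'(\lambda_i)^{1/3})$ produces the single power $f'(\lambda_i)^{1/3}$, and the difference quotient in the doubled coordinate converts the two points $Q_r, Q_{r+2q-1}$ at the same $\lambda_i$ into a contribution depending on $F_-$. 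Inverting the resulting $(q-1)\times(q-1)$ block gives the Vandermonde-type inverse with $F_-$, the functions $\sigma^{(r)}_{3q-2-l}(Q_1,\ldots,Q_{q-1})$, and the factor $2f'(\lambda_i)^{1/3}/(3F_-'(\lambda_i))$ — the extra $2$ being the one produced by Lemma \ref{symfuncder} in the degenerate direction. The normalization of the index range $2q\le l\le 3q-2$ in Proposition \ref{holdif} is chosen precisely so that these two blocks decouple cleanly.

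The main obstacle I expect is verifying that the full Jacobian matrix of $\zeta$ with respect to the coordinates $(\alpha_r,\beta_r)$ is genuinely block-triangular (indeed block-diagonal to leading order), so that the $\alpha$-block inverts using only the first $2q-1$ differentials and the $\beta$-block using only the last $q-1$. Concretely, one must check that the first block of differentials has no $\beta_r$-dependence to leading order and the second block has no $\alpha_r$-dependence to leading order: this is exactly the parity statement that even-order terms in $t_r$ feed $\alpha$ and odd-order terms feed $\beta$, combined with the order-of-vanishing count above. Once this decoupling is established, the two inversions are independent copies of the Vandermonde computation already carried out in Lemma \ref{dQrdzetas}, and collecting the scalar factors $f'(\lambda_i)^{2/3}/3$ and $2f'(\lambda_i)^{1/3}/3$ from the diagonal scalings and from Lemma \ref{symfuncder} yields the claimed formulae.
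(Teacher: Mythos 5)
Your proposal is correct and follows essentially the same route as the paper's proof: compute the forward Jacobian $\partial\zeta/\partial(\alpha_r,\beta_r)=C^{-1}\mathcal{A}$ using the expansions from Equation \eqref{trzr3}, observe that the order-of-vanishing dichotomy between the two blocks of differentials in Proposition \ref{holdif} makes $\mathcal{A}$ block-diagonal at the branch points, and invert each block by the Vandermonde computation of Lemma \ref{dQrdzetas}, with the factor $2$ in the $\beta$-formula coming from inverting the $\tfrac12$ of Lemma \ref{symfuncder}. The only cosmetic difference is that you extract the $\beta$-derivatives via the difference-quotient form of Lemma \ref{symfuncder} (taking the limit $t\to s$), whereas the paper uses the second-derivative form at $t=s$ directly; these are equivalent.
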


\begin{proof}
As in the proof of Lemma \ref{dQrdzetas}, we first evaluate the derivatives of the $\zeta_{s}$s. Sticking to points in which $Q_{r}=Q_{r+2q-1}$ for every $1 \leq r \leq q-1$, Lemma \ref{symfuncder} shows that the first $2q-1$ columns of the derivatives of $\zeta$ are the usual derivatives with respect to the first $2q-1$ points, and in the remaining $q-1$ columns we must put derivatives of second order of $\zeta$. Once again we shall begin by evaluating the derivatives of $\sum_{s=1}^{g}C_{ls}\zeta_{s}$, where the derivatives of the first order will give just
\begin{equation}
\tfrac{z(Q_{r})^{l-1}}{w(Q_{r})^{2}}\cdot\tfrac{\mathrm{d}z(Q_{r})}{\mathrm{d}t_{r}(Q_{r})}=\tfrac{3\lambda_{i}^{l-1}}{f'(\lambda_{i})^{2/3}}+O(t_{r}^{3})\quad\mathrm{or}\quad
\tfrac{z(Q_{r})^{l-2q-2}}{w(Q_{r})}\cdot\tfrac{\mathrm{d}z(Q_{r})}{\mathrm{d}t_{r}(Q_{r})}=\tfrac{3t_{r}\lambda_{i}^{l-2k-2}}{f'(\lambda_{i})^{1/3}}+O(t_{r}^{4}), \label{firstders}
\end{equation}
according to whether $1 \leq l\leq2q-1$ or $2q \leq l \leq 3q-2=g$. As the derivative with respect to $t_{r}(Q_{r})$ depends only on $Q_{r}$ and not on $Q_{r+2q-1}$, the mixed derivatives from Lemma \ref{pardersmult} vanish (this is also visible in the fact that $\zeta$ is the sum of functions, each one depending only on a single point $Q_{r}$), so that only the pure second derivative has to be taken into account.

Now, differentiating the terms with small $l$ in Equation \eqref{firstders} with respect to $t_{r}$ annihilates the constant and gives just $O(t_{r}^{2})$ from the derivative of the error term, while for large $l$ the derivative will leave the constant without the multiplier $t_{r}$ (up to $O(t_{r}^{3})$ from the derivative of the error term). Moreover, we are interested in the point where each $Q_{r}$ equals the corresponding $P_{i}$, i.e., where each $t_{r}$ vanishes. Therefore in Equation \eqref{firstders} the error terms and the full expression with large $l$ vanish, while the second derivatives for small $l$ vanish as well and the constant for large $l$ remains. The resulting matrix $\mathcal{A}$ is a block matrix, with a block of size $(2q-1)\times(2q-1)$ followed by a block of size $(q-1)\times(q-1)$, where the two blocks are Vandermonde matrices modified as in Lemma \ref{dQrdzetas}. In addition, the derivatives of $\zeta$ with respect to the coordinates from Equation \eqref{coordinates} is $C^{-1}\mathcal{A}$ as in Lemma \ref{dQrdzetas}, so that the required derivatives are the entries of $\mathcal{A}^{-1}C$. The proof of Lemma \ref{dQrdzetas} now shows that $\mathcal{A}^{-1}$ is a (block) matrix whose $rl$-entry is
\[\tfrac{f'(\lambda_{i})^{2/3}}{3F_{+}'(\lambda_{i})}(-1)^{2k-1-l}\sigma_{2q-1-l}^{(r)}(Q_{1},\ldots,Q_{2q-1}),\ \tfrac{2f'(\lambda_{i})^{1/3}}{3F_{-}'(\lambda_{i})}(-1)^{3k-2-l}\sigma_{3q-2-l}^{(r-2k-1)}(Q_{1},\ldots,Q_{q-1}),\] or 0, according to whether \[1 \leq r,l \leq 2q-1,\quad2q \leq r,l \leq 3q-2,\quad\mathrm{or\ one\ index\ is\ small\ and\ one\ is\ large,\ respectively}.\] Substituting these values into the expansion of $(\mathcal{A}^{-1}C)_{rs}$ in the two possible ranges for $r$, and recalling that $\beta_{r}$ corresponds to the $(r+2k-1)$st row, yields the desired expressions. This completes the proof of the lemma.
\end{proof}

\section{The Thomae Derivative Formula for Trigonal Curves \label{ThomaeDeg3}}

In this section we prove a Thomae derivative formula for trigonal curves with smooth affine models (these curves are called \emph{non-singular $Z_{3}$ curves} in \cite{[FZ]}). Fix representing paths for a canonical homology basis $\{a_{j}\}_{j=1}^{g}$ and $\{b_{j}\}_{j=1}^{g}$ on a general compact Riemann surface $X$ of genus $g$, so that all the integrals that follow will be understood to go over these representing paths. Recall that given two distinct points $R$ and $Q$ on $X$, neither of which lies on any representing path, there exists a differential that is holomorphic on $X\setminus\{R,Q\}$ and such that it has simple poles at $R$ and $Q$, with respective residues 1 and $-1$. This differential, which we denote by $\Omega_{R,Q}$, is uniquely determined by the normalization $\int_{a_{j}}\Omega_{R,Q}=0$ for every $1 \leq j \leq g$. Moreover, if $S$ and $T$ is another pair of such points, then the Riemann bilinear relations state that \[\int_{Q}^{R}\Omega_{S,T}=\int_{T}^{S}\Omega_{R,Q},\mathrm{\ provided\ that\ the\ integration\ paths\ do\ not\ intersect\ }a_{j}\mathrm{\ and\ }b_{j}.\] Uniqueness implies that given 3 such points $R$, $Q$, and $S$ we have
\begin{equation}
\Omega_{R,Q}+\Omega_{Q,S}=\Omega_{R,S},\mathrm{\ so\ that\ for\ }D=\sum_{r=1}^{l}R_{r}\mathrm{\ and\ }\Delta=\sum_{r=1}^{l}Q_{r}\mathrm{\ we\ set\ }\Omega_{D,\Delta}=\sum_{r=1}^{l}\Omega_{R_{r},Q_{r}}, \label{Omegadivs}
\end{equation}
and the result is independent of the orderings. Moreover, let a differential $\omega$ be given, and take two divisors $\Xi=\sum_{k=1}^{p}S_{k}$ and $\Gamma=\sum_{k=1}^{p}T_{k}$, where no point in neither support is on some $a_{j}$ or some $b_{j}$. In this situation we write \[\int_{\Gamma}^{\Xi}\omega=\sum_{k=1}^{p}\int_{T_{k}}^{S_{k}}\omega,\mathrm{\ and\ deduce\ that\ }\int_{\Delta}^{D}\Omega_{\Xi,\Gamma}=\int_{\Gamma}^{\Xi}\Omega_{D,\Delta},\mathrm{\ under\ the\ same\ condition}.\]

For establishing our equivalent of Lemma \ref{thetaquotient} we shall be using the following result, which is essentially already proved in Sections 154 and 171 of \cite{[Ba]}. Let $z$ be a meromorphic function of degree $n$ on $X$, and assume that none of its poles lie on the representing paths. For $\lambda\in\mathbb{CP}^{1}$ write $z^{*}\lambda$ for the divisor on $X$ consisting of the pre-images of $\lambda$ in $X$, counted with multiplicities. We now have the following relation.
\begin{lem}
Given $\lambda\in\mathbb{C}$ such that none of its pre-images under $z$ lie on the paths chosen to represent the homology basis, there exists a vector $\nu_{\lambda}\in\mathbb{Z}^{g}$ such that for two divisors $D$ and $\Delta$ as in Equation \eqref{Omegadivs} we have the equality \[\exp\bigg(\int_{z^{*}\infty}^{z^{*}\lambda}\Omega_{\Delta,D}\bigg)\cdot\mathbf{e}[\nu_{\lambda}^{t}u(D-\Delta)]=\prod_{r=1}^{l}\frac{\lambda-z(Q_{r})}{\lambda-z(R_{r})}.\] \label{dif3kind}
\end{lem}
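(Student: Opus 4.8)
The plan is to reduce the claim to the reciprocity law for differentials of the third kind recorded right after Equation \eqref{Omegadivs}, combined with the identification of a suitably normalized third-kind differential with the logarithmic differential $\mathrm{d}\log(z-\lambda)=\tfrac{\mathrm{d}z}{z-\lambda}$. First I would apply that reciprocity, $\int_{\Delta}^{D}\Omega_{\Xi,\Gamma}=\int_{\Gamma}^{\Xi}\Omega_{D,\Delta}$, with $\Xi=z^{*}\lambda$ and $\Gamma=z^{*}\infty$. Using that $\Omega_{\Delta,D}=-\Omega_{D,\Delta}$ (swapping the poles swaps the residues), this rewrites the quantity inside the exponential as
\[\int_{z^{*}\infty}^{z^{*}\lambda}\Omega_{\Delta,D}=-\int_{z^{*}\infty}^{z^{*}\lambda}\Omega_{D,\Delta}=-\int_{\Delta}^{D}\Omega_{z^{*}\lambda,z^{*}\infty},\]
so that the fiber divisors $z^{*}\lambda$ and $z^{*}\infty$ now index the \emph{poles} of the differential rather than the endpoints of integration, while $D$ and $\Delta$ become the endpoints.

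The key observation is then that $\tfrac{\mathrm{d}z}{z-\lambda}$ is a meromorphic differential whose polar divisor is exactly $z^{*}\lambda$ (the zeros of $z-\lambda$, with residue $+1$) together with $z^{*}\infty$ (the poles of $z$, with residue $-1$), counted with multiplicity. Hence it shares its principal parts and residues with $\Omega_{z^{*}\lambda,z^{*}\infty}$, so the two differ by a holomorphic differential $\eta$. Since $\Omega_{z^{*}\lambda,z^{*}\infty}$ has vanishing $a$-periods by its normalization, we get $\int_{a_{s}}\eta=\int_{a_{s}}\tfrac{\mathrm{d}z}{z-\lambda}$, which is $2\pi\imath$ times the winding number of $z-\lambda$ about $0$ along $a_{s}$, hence an integer multiple of $2\pi\imath$. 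Writing this integer as $(\nu_{\lambda})_{s}$ and invoking the duality $\int_{a_{j}}v_{s}=\delta_{js}$ of Equation \eqref{matrices} gives $\eta=2\pi\imath\sum_{s}(\nu_{\lambda})_{s}v_{s}$, so that
\[\Omega_{z^{*}\lambda,z^{*}\infty}=\frac{\mathrm{d}z}{z-\lambda}-2\pi\imath\sum_{s=1}^{g}(\nu_{\lambda})_{s}v_{s},\qquad \nu_{\lambda}\in\mathbb{Z}^{g},\]
with $\nu_{\lambda}$ depending only on $\lambda$ and the homology basis, not on $D$ or $\Delta$.

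Substituting this into the integral and integrating termwise along $\Delta\to D=\sum_{r}(Q_{r}\to R_{r})$, the logarithmic part yields $\sum_{r}\log\tfrac{z(R_{r})-\lambda}{z(Q_{r})-\lambda}$, while the holomorphic part yields $-2\pi\imath\,\nu_{\lambda}^{t}u(D-\Delta)$, since $\int_{\Delta}^{D}v_{s}=u_{s}(D-\Delta)$. Exponentiating gives $\exp\big(\int_{z^{*}\infty}^{z^{*}\lambda}\Omega_{\Delta,D}\big)=\prod_{r}\tfrac{\lambda-z(Q_{r})}{\lambda-z(R_{r})}\cdot\mathbf{e}[\nu_{\lambda}^{t}u(D-\Delta)]$; replacing $\nu_{\lambda}$ by its negative (again an integer vector) and moving the winding-number factor to the left produces exactly the asserted identity, the sign being a matter of convention in the definition of $\nu_{\lambda}$.

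The analytic heavy lifting — the reciprocity relation — is already available, so the main obstacle is the normalization bookkeeping rather than any hard estimate. The delicate points are: verifying that $\tfrac{\mathrm{d}z}{z-\lambda}$ and $\Omega_{z^{*}\lambda,z^{*}\infty}$ genuinely share all principal parts (this is where the hypothesis that the preimages of $\lambda$ avoid the representing paths is used, and where one must treat ramified fibers via multiplicities), and pinning the holomorphic correction down as an \emph{integral} multiple of $2\pi\imath$ along each $a$-cycle. The branch ambiguity of the logarithm and the path-dependence of the integrals each shift the exponent only by elements of $2\pi\imath\mathbb{Z}$ and therefore disappear after exponentiation; the one genuinely nontrivial input is the integrality of the $a$-periods of $\mathrm{d}\log(z-\lambda)$, which is precisely the statement that $\nu_{\lambda}\in\mathbb{Z}^{g}$ and is what collapses the correction term to the clean factor $\mathbf{e}[\nu_{\lambda}^{t}u(D-\Delta)]$.
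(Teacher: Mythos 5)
Your proposal is correct and takes essentially the same route as the paper: both proofs hinge on the decomposition $\tfrac{\mathrm{d}z}{z-\lambda}=\Omega_{z^{*}\lambda,z^{*}\infty}-2\pi\imath\sum_{s=1}^{g}\nu_{\lambda,s}v_{s}$ with $\nu_{\lambda}\in\mathbb{Z}^{g}$, followed by integrating over the pairs $(R_{r},Q_{r})$, invoking the reciprocity (Riemann bilinear) relation, and exponentiating. The only difference is that the paper obtains this decomposition by citing the proof of Abel's Theorem in \cite{[FK]}, whereas you derive it inline via principal-part matching and the winding-number integrality of the $a$-periods of $\mathrm{d}\log(z-\lambda)$ --- a self-contained unpacking of the same argument rather than a different one.
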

We remark that Lemma \ref{dif3kind} with $l=g$ can be used for solving the Jacobi inversion problem.

\begin{proof}
The proof of Abel's Theorem in \cite{[FK]}, in the explicit case relating the function $z-\lambda$ and its divisor $z^{*}\lambda-z^{*}\infty$, implies that there are integers $\{\nu_{\lambda,s}\}_{s=1}^{g}$ such that
\[\textstyle{\tfrac{dz}{z-\lambda}=\Omega_{z^{*}\lambda,z^{*}\infty}-2\pi\imath\sum_{s=1}^{g}\nu_{\lambda,s}v_{s},\mathrm{\ and\ set\ }\nu_{\lambda}\in\mathbb{Z}^{g}\mathrm{\ to\ be\ with\ coordinates\ }\nu_{\lambda,s},1 \leq s \leq g.}\] Integrating from $R_{r}$ to $Q_{r}$, summing over $r$, applying the Riemann bilinear relations, and exponentiating yields the desired equality. This proves the lemma.
\end{proof}
We remark that the vector $\nu_{\lambda}$ from Lemma \ref{dif3kind} depends continuously on $\lambda$, so that it is constant on domains in $\mathbb{CP}^{1}$ (these domains are separated by the images of the representing paths in $X$ under $z$). In particular, it is clear that $\nu_{\infty}=0$ (just take the limit $\lambda\to\infty$ in Lemma \ref{dif3kind}), so that $\nu_{\lambda}=0$ for every large enough $\lambda$. However, in our application below it may not be possible to organize $\lambda$ to be large enough, so that $\nu_{\lambda}$ may be non-zero.

\smallskip

Following Section 187 of \cite{[Ba]} we present a formula, using theta functions, for expressions like the one on the left hand side of Lemma \ref{dif3kind}, for divisors of degree $g$. The result, which is interesting in its own right, still holds for a general Riemann surface.
\begin{thm}
Let $D$ and $\Delta$ be divisors of degree $g$ on $X$, take a base point $P_{0}$ on $X$, set \[e=u_{P_{0}}(\Delta)+K_{P_{0}}\mathrm{\ and\  }\mathfrak{e}=u_{P_{0}}(D)+K_{P_{0}},\quad\mathrm{and\ assume\ that}\quad\theta(e)\neq0\mathrm{\ and\ }\theta(\mathfrak{e})\neq0.\] Then we have, for every $R$ and $Q$ in $X$, the equality \[\exp\bigg(\int_{Q}^{R}\Omega_{D,\Delta}\bigg)=\frac{\theta\big(u_{P_{0}}(R)-\mathfrak{e},\tau\big)\theta\big(u_{P_{0}}(Q)-e,\tau\big)}{\theta\big(u_{P_{0}}(R)-e,\tau\big)\theta\big(u_{P_{0}}(Q)-\mathfrak{e},\tau\big)}.\] \label{thetaOmega}
\end{thm}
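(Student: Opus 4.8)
The plan is to prove this by comparing the divisors of the two sides as functions of $R$ (with $Q$ held fixed), showing they agree up to a multiplicative constant, and then fixing that constant by a symmetry or normalization argument. First I would fix $Q$ and treat both sides as functions of the variable point $R\in X$. The left-hand side $\exp\big(\int_Q^R\Omega_{D,\Delta}\big)$ is a multivalued function on $X$ whose logarithmic differential is exactly $\Omega_{D,\Delta}=\sum_r\Omega_{R_r,Q_r}$, a differential of the third kind with simple poles of residue $+1$ at each $R_r$ (the support of $D$) and residue $-1$ at each $Q_r$ (the support of $\Delta$). Hence the left side, as a function of $R$, has simple zeros on $D$ and simple poles on $\Delta$, and its multivaluedness comes from the $a$- and $b$-periods of $\Omega_{D,\Delta}$.

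Next I would analyze the right-hand side via the Riemann Vanishing Theorem. Writing $\mathfrak{e}=u_{P_0}(D)+K_{P_0}$, the function $R\mapsto\theta\big(u_{P_0}(R)-\mathfrak{e},\tau\big)$ vanishes (assuming $\theta(\mathfrak e)\ne0$ so the divisor is non-special) precisely on the $g$ points of $D$, and likewise $\theta\big(u_{P_0}(R)-e,\tau\big)$ vanishes exactly on $\Delta$. The factors $\theta\big(u_{P_0}(Q)-e\big)$ and $\theta\big(u_{P_0}(Q)-\mathfrak e\big)$ are constants in $R$. Thus the right-hand quotient, as a function of $R$, has simple zeros on $D$ and simple poles on $\Delta$—matching the left side exactly. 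To compare the two as genuine functions on $X$ I would invoke the quasi-periodicity of $\theta$ from Equation \eqref{periodicity}: under $R\mapsto R+a_j$ or $R\mapsto R+b_j$ the theta quotient picks up multipliers governed by $\mathfrak e-e=u_{P_0}(D-\Delta)$, and these must be compared with the period multipliers of $\exp\big(\int_Q^R\Omega_{D,\Delta}\big)$. The $a$-periods of $\Omega_{D,\Delta}$ vanish by the normalization $\int_{a_j}\Omega_{R,Q}=0$, matching the trivial $a$-period behavior of the theta quotient; the $b$-periods of $\Omega_{D,\Delta}$ are, by the standard Riemann bilinear relation, equal to $2\pi\imath\,u(D-\Delta)$, which is exactly what produces the matching exponential factor on the theta side. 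Since both sides are single-valued meromorphic functions of $R$ with the same zeros and poles, their ratio is a constant $c$ depending only on $Q$, $D$, and $\Delta$.

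To pin down $c=1$ I would exploit the symmetry of the construction. The cleanest route is the Riemann bilinear relation already quoted before the statement, namely $\int_Q^R\Omega_{D,\Delta}=\int_\Delta^D\Omega_{R,Q}$, which exhibits the left-hand side as symmetric under interchanging the roles of the pair $(R,Q)$ with the pair $(D,\Delta)$. Setting $R=Q$ forces the left side to equal $1$, and in the theta quotient the numerator and denominator coincide, giving $1$ as well; this evaluation determines $c$ once the $R$-independent prefactors are accounted for. Concretely, I would argue that the constant $c$ must itself be invariant under the $(R,Q)\leftrightarrow(D,\Delta)$ symmetry and equal its own reciprocal, forcing $c=1$. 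The main obstacle I anticipate is bookkeeping the quasi-periodicity multipliers carefully: one must verify that the vector of Riemann constants $K_{P_0}$ enters symmetrically so that the characteristic shifts in Equation \eqref{periodicity} cancel precisely against the $b$-period factors $2\pi\imath\,u(D-\Delta)$ of $\Omega_{D,\Delta}$, with no stray root-of-unity or sign discrepancy. This is exactly the delicate point treated in Baker's Section 187, and getting the normalization to come out as an honest equality (rather than equality up to a constant) is where the real care is needed.
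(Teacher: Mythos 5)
Your proposal is correct and follows essentially the same route as the paper's proof: both compare the divisors in $R$ via the Riemann Vanishing Theorem, match the monodromy multipliers (trivial $a$-periods by the normalization of $\Omega_{D,\Delta}$, and $b$-periods $2\pi\imath\int_{\Delta}^{D}v_{j}$ cancelling against the quasi-periodicity factor $\mathbf{e}[\varepsilon_{j}^{t}(\mathfrak{e}-e)]$ from Equation \eqref{periodicity}), conclude the ratio is constant, and evaluate it at $R=Q$ to get $1$. The only superfluous element is your closing reciprocity argument (which by itself would only give $c=\pm1$); the substitution $R=Q$, exactly as in the paper, already settles the constant.
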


\begin{proof}
Denote by $\xi(R,Q)$ the quotient arising from dividing the right hand side by the left hand side. We claim that for fixed $Q$, this ratio is a well-defined constant function of $R \in X$ (as long as neither of the theta terms involving $Q$ vanish), and vice versa. This amounts to showing that its divisor is trivial, and that the expression is invariant under the monodromy action.

We begin with the monodromy. We recall from \cite{[FK]} that $\Omega_{D,\Delta}$ is normalized to have vanishing $a_{j}$-integrals, and that Equation \eqref{periodicity} implies that the theta function is invariant under changing the argument by a cycle $a_{j}$. Hence both parts of the quotient $\xi(P,Q)$ are invariant under this part of the monodromy. On the other hand, we know that \[\int_{b_{j}}\Omega_{D,\Delta}=2\pi\imath\int_{\Delta}^{D}v_{j},\mathrm{\ and\ }b_{j}\mathrm{\ multiplies\ }\theta\big(u_{P_{0}}(R)-e,\tau\big)\mathrm{\ by\ }\mathbf{e}\big[-\tfrac{\varepsilon_{j}^{t}\tau\varepsilon_{j}^{t}}{2}-\varepsilon_{j}^{t}\big(u_{P_{0}}(R)-e\big)\big],\] where here $\varepsilon_{j}$ stands for the $j$th standard basis vector of length $g$. The quotient on the right hand side is therefore multiplied by $\mathbf{e}[\varepsilon_{j}^{t}(\mathfrak{e}-e)]$, which is the exponent of $2\pi\imath\int_{\Delta}^{D}v_{j}$ by definition, and $\xi(R,Q)$ is invariant under this operation as well. As for the divisor in the variable $R$, the right hand side has divisor $D-\Delta$ by the Riemann Vanishing Theorem, and the left hand side has the same divisor because of the singularities of $\Omega_{D,\Delta}$. Hence $R\mapsto\xi(R,Q)$ is a well-defined function on $X$ with a trivial divisor, hence a constant.

Now, the same argument (with some signs inverted) show that $Q\mapsto\xi(R,Q)$ is also constant for every $R$ (up to a few singular points perhaps). For evaluating the constant we substitute $R=Q$, where the two asserted expressions reduce to 1. Hence $\xi(R,Q)=1$ for every $R$ and $Q$ in $X$, which proves the theorem.
\end{proof}

Lemma \ref{thetaquotient} can be proved by combining Lemma \ref{dif3kind} and Theorem \ref{thetaOmega} for a hyper-elliptic curve, in a similar manner to the proof of the next result. In our case, where $z:X\to\mathbb{CP}^{1}$ is a cyclic cover of degree 3 as in Equation \eqref{Z3curve}, we do it and arrive at the following conclusion.
\begin{prop}
Let $D=\sum_{r=1}^{g}R_{r}$ and $\Delta=\sum_{r=1}^{g}Q_{r}$ be divisors on $X$ (as in Equation \eqref{Omegadivs} with $l=g$), set $e$ and $\mathfrak{e}$ to be as in Theorem \ref{thetaOmega} with the base point $P_{0}$ being the pole $P_{\infty}$ of $z$, and denote by $\nu_{k}$ the vector $\nu_{\lambda_{k}}\in\mathbb{Z}^{g}$ that is associated with $\lambda_{k}$ in Lemma \ref{dif3kind}. We then have the equality \[\frac{\theta^{3}\big(u_{P_{\infty}}(P_{k})-e,\tau\big)\theta^{3}(-\mathfrak{e},\tau)}{\theta^{3}\big(u_{P_{\infty}}(P_{k})-\mathfrak{e},\tau\big)\theta^{3}(-e,\tau)}\cdot\mathbf{e}[\nu_{k}^{t}(\mathfrak{e}-e)]=
\prod_{r=1}^{g}\frac{\lambda_{k}-z(Q_{r})}{\lambda_{k}-z(R_{r})}.\] \label{quottheta}
\end{prop}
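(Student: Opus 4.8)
The plan is to combine Lemma \ref{dif3kind} with Theorem \ref{thetaOmega}, applied three times in a way that exploits the cyclic structure of the degree-3 cover. The key observation is that for the function $z$ of degree $3$, the divisor $z^*\lambda_k$ is precisely $3P_k$ (the branch point over $\lambda_k$ appears with multiplicity $3$, since the cover is totally ramified there), while $z^*\infty = 3P_\infty$. Thus Lemma \ref{dif3kind}, applied with $D=\sum_r R_r$ and $\Delta=\sum_r Q_r$ of degree $g$, will produce on its left-hand side the factor $\exp\big(\int_{z^*\infty}^{z^*\lambda_k}\Omega_{\Delta,D}\big)$, which equals $\exp\big(3\int_{P_\infty}^{P_k}\Omega_{\Delta,D}\big)$ because the three pre-images coincide at $P_k$ (and at $P_\infty$), and the right-hand side is exactly the product $\prod_{r=1}^g \frac{\lambda_k - z(Q_r)}{\lambda_k - z(R_r)}$ that appears in the proposition.

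First I would note the orientation bookkeeping: Lemma \ref{dif3kind} is stated with $\Omega_{\Delta,D}$, whereas Theorem \ref{thetaOmega} is stated with $\Omega_{D,\Delta}$, so the integral $\int_{z^*\infty}^{z^*\lambda_k}\Omega_{\Delta,D}$ equals $-\int_{z^*\infty}^{z^*\lambda_k}\Omega_{D,\Delta} = \int_{z^*\lambda_k}^{z^*\infty}\Omega_{D,\Delta}$, and using the totally ramified structure this is $3\int_{P_k}^{P_\infty}\Omega_{D,\Delta}$. Next I would invoke Theorem \ref{thetaOmega} with base point $P_0=P_\infty$, taking the pair $(R,Q)=(P_\infty, P_k)$, to rewrite $\exp\big(\int_{P_k}^{P_\infty}\Omega_{D,\Delta}\big)$ as the theta quotient
\[
\frac{\theta\big(u_{P_\infty}(P_\infty)-\mathfrak{e},\tau\big)\,\theta\big(u_{P_\infty}(P_k)-e,\tau\big)}{\theta\big(u_{P_\infty}(P_\infty)-e,\tau\big)\,\theta\big(u_{P_\infty}(P_k)-\mathfrak{e},\tau\big)}.
\]
Since the Abel--Jacobi map is based at $P_\infty$, we have $u_{P_\infty}(P_\infty)=0$, so the two theta terms carrying $P_\infty$ collapse to $\theta(-\mathfrak{e},\tau)$ and $\theta(-e,\tau)$. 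Raising this expression to the third power (to account for the factor of $3$ from the ramification) produces exactly the theta-cube quotient on the left-hand side of the proposition.

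The one genuinely delicate point is the correction factor $\mathbf{e}[\nu_k^t(\mathfrak{e}-e)]$. In Lemma \ref{dif3kind} the factor is $\mathbf{e}[\nu_\lambda^t u(D-\Delta)]$, and with base point $P_\infty$ we have $u_{P_\infty}(D-\Delta)=\mathfrak{e}-e$ (the vectors of Riemann constants cancel), so writing $\nu_k$ for $\nu_{\lambda_k}$ gives precisely the stated factor $\mathbf{e}[\nu_k^t(\mathfrak{e}-e)]$. The main obstacle, then, is not conceptual but the careful verification that all three applications of the ramified evaluation $z^*\lambda_k = 3P_k$ are legitimate --- namely that $P_k$ and $P_\infty$ do not lie on the representing paths, so that both Lemma \ref{dif3kind} and Theorem \ref{thetaOmega} apply, and that the hypothesis $\theta(e)\neq 0$, $\theta(\mathfrak{e})\neq 0$ of Theorem \ref{thetaOmega} holds, which can be arranged by taking $D$ and $\Delta$ non-special (or handled by continuity afterward). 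Once the orientation of $\Omega$ and the vanishing $u_{P_\infty}(P_\infty)=0$ are correctly tracked, cubing the identity of Theorem \ref{thetaOmega} and matching it against the cubed product from Lemma \ref{dif3kind} completes the proof.
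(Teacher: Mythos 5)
Your proposal is correct and follows essentially the same route as the paper's own proof: both combine Lemma \ref{dif3kind} (with $u(D-\Delta)=\mathfrak{e}-e$ after cancelling the Riemann constants) with Theorem \ref{thetaOmega} taken at base point $P_{0}=P_{\infty}$ and the pair $(P_{\infty},P_{k})$, using the total ramification $z^{*}\lambda_{k}=3P_{k}$, $z^{*}\infty=3P_{\infty}$ to produce the cube. Your explicit orientation bookkeeping, replacing $\int_{P_{\infty}}^{P_{k}}\Omega_{\Delta,D}$ by $\int_{P_{k}}^{P_{\infty}}\Omega_{D,\Delta}$ before invoking Theorem \ref{thetaOmega}, is if anything slightly more careful than the paper's wording of the same step, and it lands on the identical theta quotient.
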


\begin{proof}
Lemma \ref{dif3kind} compares the right hand side with $\exp\big(\int_{z^{*}\infty}^{z^{*}\lambda_{k}}\Omega_{\Delta,D}\big)\cdot\mathbf{e}[\nu_{k}^{t}(\mathfrak{e}-e)]$, by the definition of $e$, $\mathfrak{e}$, and $\nu_{k}$ (indeed, we can cancel $K_{P_{0}}$ and then $u_{P_{0}}(D-\Delta)$ is independent of $P_{0}$). On the other hand, the divisors $z^{*}\lambda_{k}$ and $z^{*}\infty$ are just $3P_{k}$ and $3P_{\infty}$ respectively, so that the exponent is $\exp\big(\int_{P_{\infty}}^{P_{k}}\Omega_{\Delta,D}\big)^{3}$. But this exponent is the left hand side in Theorem \ref{thetaOmega}, with $Q=P_{0}=P_{\infty}$ (so that $u_{P_{0}}(Q)$ vanishes) and $R=P_{k}$. We therefore substitute the right hand side from that theorem instead, which yields the asserted quotient of theta functions because of the third power involved. This proves the proposition.
\end{proof}
In this context we remark that $u_{P_{\infty}}(P_{k})$ is a point of order 3 in the Jacobian, since $3P_{k}-3P_{\infty}$ is the (principal) divisor of $z-\lambda$. Moreover, it is never a half-characteristic (when $k\neq\infty$), since otherwise $2P_{k}-2P_{\infty}$ would have also been principal, and therefore so will $P_{k}-P_{\infty}$ be, and no divisor of the form $P-Q$ can be principal on any compact Riemann surface with positive genus.

\begin{rmk}
In the case where $q=1$ (hence $g=1$ as well), and $z$ is normalized such that $\lambda_{2}=0$, Equation \eqref{Z3curve} for the trigonal curve becomes $w^{3}=z(z-\lambda_{1})$. In this case Proposition \ref{quottheta} can also be established as follows. Denote the standard 3rd root of unity $\mathbf{e}\big(\frac{1}{3}\big)$ by $\rho$, and let $L$ be the scalar multiple of the lattice $\mathbb{Z}\oplus\mathbb{Z}\rho$ whose $g_{3}$-parameter is $-\lambda_{1}^{2}$ (and $g_{2}=0$, of course). Then if $E$ is the elliptic curve $\mathbb{C}/L$, of $j$-invariant 0 and an automorphism group of order 6, and $\wp_{L}$ is the corresponding Weierstrass $\wp$-function associated with the lattice $L$, then the map sending $\xi \in E$ to $(z,w)$ with $w=\wp_{L}(\xi)$ and $z=\frac{\wp_{L}'(\xi)+\lambda_{1}}{2}$ defines, by the classical theory of complex elliptic curves, an isomorphism from $E$ onto $X$ (the trigonal automorphism group, acting on $w$ alone, becomes, via Equation (5.3) of \cite{[EMO]}, complex multiplication by the group generated by $\rho$, and adding the automorphism $z\mapsto\lambda_{1}-z$ completes it to the group of order 6). If $c$ is the scalar differentiating $L$ from $\mathbb{Z}\oplus\mathbb{Z}\rho$ (whose 6th power is determines by the value of $g_{3}$), and the homology basis for $X$ is chosen such that the $1\times1$ matrix $\tau$ from Equation \eqref{matrices} is $\rho$, then $u_{P_{\infty}}$ identifies $X$ with $J(X)=\mathbb{C}/(\mathbb{Z}\oplus\mathbb{Z}\rho)$ such that for $\xi \in E$ we have the explicit equality $u_{P_{\infty}}\big(z(\xi),w(\xi)\big)=\frac{\xi}{c}$. For such curves Proposition 5.1 of \cite{[EMO]} established the generalized addition formula \[\frac{\sigma_{L}(\xi-\eta)\sigma_{L}(\xi-\rho\eta)\sigma_{L}(\xi-\overline{\rho}\eta)}{\sigma_{L}(\xi)^{3}\sigma_{L}(\eta)^{3}}=z(\xi)-z(\eta)\] for $\xi$ and $\eta$ in $\mathbb{C}$, where $\sigma_{L}(\xi)=\xi\prod_{0 \neq l \in L}\big(1-\frac{\xi}{l}\big)e^{\frac{\xi}{l}+\frac{\xi^{2}}{2l^{2}}}$ is the Weierstrass $\sigma$-function associated with the lattice $L$. Note that both sides are well-defined for $\xi$ and $\eta$ in the quotient $E$, and by the homogeneity property of $\sigma$ we can replace each term like $\sigma_{L}(\xi)$ by $\sigma_{\rho}(\frac{\xi}{c})$ (with an extra denominator of $c^{3}$), where for $\tau\in\mathcal{H}_{1}$ the function $\sigma_{\tau}$ is the $\sigma$-function associated with the lattice $\mathbb{Z}\oplus\mathbb{Z}\tau$. The latter function behaves under adding a lattice element via \[\sigma_{\tau}(\zeta+n\tau+l)=e^{G_{2}(\tau)\frac{(n\tau+l)^{2}}{2}-\pi\imath n^{2}\tau+[G_{2}(\tau)(n\tau+l)-2\pi\imath n]\zeta}(-1)^{n+l}\sigma_{\tau}(\zeta)\qquad\mathrm{for}\qquad\zeta\in\mathbb{C}\] when $n$ and $l$ are integers, and $G_{2}$ is the quasi-modular Eisenstein series of weight 2. Taking $\eta \in E$ to be the point with $z(\eta)=\lambda_{1}$ and $w(\eta)=0$, which is thus invariant under the trigonal automorphism group, the $\sigma$-product in the numerator becomes $\sigma(\xi-\eta)^{3}$ times a non-trivial exponent, arising from the fact that the image of $\eta$ in $E$ is $\rho$-invariant, while $\eta\in\mathbb{C}$ itself is not. If we normalize the scalar $c$ such that $\frac{\eta}{c}=\frac{\imath}{\sqrt{3}}$ (this element and its additive inverse are the only elements whose images modulo $\mathbb{Z}\oplus\mathbb{Z}\rho$ are non-trivial and $\rho$-invariant, up to adding lattice elements), then since $\rho\frac{\imath}{\sqrt{3}}=\frac{\imath }{\sqrt{3}}-\rho-1$ and $\overline{\rho}\frac{\imath }{\sqrt{3}}=\frac{\imath }{\sqrt{3}}-\rho$ we obtain, via the homogeneity property of $\sigma$ and its behavior under adding lattice elements to the argument, that the exponential differentiating the $\sigma$-product in the numerator from $\sigma(\xi-\eta)^{3}$ is \[e^{G_{2}(\rho)\frac{(\rho+1)^{2}}{2}-\pi\imath\rho+[G_{2}(\rho)(\rho+1)-2\pi\imath]\frac{\xi-\eta}{c}}(+1) \cdot e^{G_{2}(\rho)\frac{\rho^{2}}{2}-\pi\imath\rho+[G_{2}(\rho)\rho-2\pi\imath]\frac{\xi-\eta}{c}}(-1).\] Now, since $(\rho+1)^{2}=\rho$, $\rho^{2}=\overline{\rho}$, $\rho+\overline{\rho}=-1$, $2\rho+1=\imath\sqrt{3}$, and the quasi-modularity of $G_{2}$ under the stabilizer of $\rho$ determines the value of $G_{2}(\rho)$ to be $-\frac{2\pi}{\sqrt{3}}$, we can substitute the value $\frac{\imath}{\sqrt{3}}$ of $\frac{\eta}{c}$ and the latter factor reduces to \[-e^{\frac{\pi}{\sqrt{3}}+\pi\imath+\pi\sqrt{3}-\big[4\pi\imath+\frac{2\pi}{\sqrt{3}}\cdot\imath\sqrt{3}\big]\big(\frac{\xi}{c}-\frac{\imath}{\sqrt{3}}\big)}=+e^{-\frac{2\pi}{\sqrt{3}}-6\pi\imath\frac{\xi}{c}}.\] Since $z(\eta)=\lambda_{1}$ by assumption, and $\frac{\xi}{c}=u_{P_{\infty}}\big(z(\xi),w(\xi)\big)$ by our normalization, it follows that if we denote the point $\big(z(\xi),w(\xi)\big)$ by $Q_{1}$ (with $z(Q_{1})=z(\xi)$, of course), then we get \[z(Q_{1})-\lambda_{1}=\frac{\sigma_{L}(\xi-\eta)^{3}e^{-\frac{2\pi}{\sqrt{3}}-6\pi\imath u_{P_{\infty}}(Q_{1})}}{\sigma_{L}(\xi)^{3}\sigma_{L}(\eta)^{3}}.\] Taking another element, say $\omega\in\mathbb{C}$, and setting $R_{1}=\big(z(\omega),w(\omega)\big)$, we deduce that the quotient $\frac{\lambda_{1}-z(Q_{1})}{\lambda_{1}-z(R_{1})}$ (which is the right hand side of Proposition \ref{quottheta}, with $k=1$, since $g=1$ in this case) can be written as $\frac{\sigma_{L}(\xi-\eta)^{3}\sigma_{L}(\omega)^{3}}{\sigma_{L}(\omega-\eta)^{3}\sigma_{L}(\xi)^{3}}\mathbf{e}\big(3u(R_{1}-Q_{1})\big)$. By the homogeneity property of the $\sigma$-function we can replace each $\sigma_{L}$ by $\sigma_{\rho}$ and divide each argument by $c$, and as the function $\sigma_{\tau}$ is related to the genus 1 theta function $\theta\big[{1 \atop 1}\big]$ by the equality \[\textstyle{\theta\big[{1 \atop 1}\big](\zeta,\tau)=\frac{d}{d\zeta}\theta\big[{1 \atop 1}\big](\zeta,\tau)\big|_{\zeta=0}e^{-G_{2}(\tau)\zeta^{2}/2}\sigma_{\tau}(\zeta)},\] we can substitute this theta function (with $\tau=\rho$) instead of each occurrence of $\sigma_{\rho}$, where the multiplying coefficients cancel and we get an additional exponent of \[e^{3G_{2}(\rho)\big(\frac{(\xi-\eta)^{2}}{2c^{2}}+\frac{\omega^{2}}{2c^{2}}-\frac{(\omega-\eta)^{2}}{2c^{2}}-\frac{\xi^{2}}{2c^{2}}\big)}=e^{3G_{2}(\rho)\cdot\frac{\eta}{c}\cdot\frac{\omega-\xi}{c}}=
e^{-3\frac{2\pi}{\sqrt{3}}\cdot\frac{\imath}{\sqrt{3}} \cdot u(R_{1}-Q_{1})}=\mathbf{e}\big(-u(R_{1}-Q_{1})\big).\] Applying Equation \eqref{transchar} to each theta function (with $\varepsilon=\delta=1$), noting that the additional exponents cancel, observing that $K_{P_{0}}$ is the image of $\frac{\tau+1}{2}$ in $\mathbb{C}/\mathbb{Z}\oplus\mathbb{Z}\tau$ regardless of $P_{0}$ in genus 1 and that $\frac{\eta}{c}=u_{P_{\infty}}(P_{1})$ by definition, and substituting the values of $e$ and $\mathfrak{e}$ arising from $\Delta=Q_{1}$ and $D=R_{1}$ as in Theorem \ref{thetaOmega}, this quotient indeed reduces to the expression from the left hand side of Proposition \ref{quottheta} (with $\nu_{1}=2$ and $\tau=\rho$), as desired. \label{q1ell}
\end{rmk}

Choosing an appropriate divisor $D$ in Proposition \ref{quottheta} yields our analogue for Lemma \ref{thetaquotient}.
\begin{prop}
Given $1 \leq k\leq3q-1$, there is a 12th root of unity $\epsilon_{k}$ such that considered as functions of $\{Q_{r}\}_{r=1}^{g}$, we get the equality \[\frac{\theta[u_{P_{\infty}}(P_{k})]^{3}\big(-\sum_{r=1}^{g}u_{P_{\infty}}(Q_{r})-K_{P_{\infty}},\tau\big)}{\theta^{3}\big(-\sum_{r=1}^{g}u_{P_{\infty}}(Q_{r})-K_{P_{\infty}},\tau\big)}=
\frac{\epsilon_{k}}{f'(\lambda_{k})}\prod_{r=1}^{g}\big(\lambda_{k}-z(Q_{r})\big).\] \label{quotforZ3}
\end{prop}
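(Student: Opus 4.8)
The plan is to derive Proposition \ref{quotforZ3} from Proposition \ref{quottheta} by choosing the auxiliary divisor $D$ so that the theta quotient collapses, and then pinning down the remaining scalar using Theorem \ref{Thomae1Z3}. First I would take $\Delta=\sum_{r=1}^{g}Q_{r}$ as in the statement, so that $e=u_{P_{\infty}}(\Delta)+K_{P_{\infty}}$ and hence $-e$ is the argument appearing in the denominator on the left-hand side of the assertion. The key device is to choose $D$ to be the divisor $\sum_{i\in\Lambda_{1}}P_{i}+2\sum_{i\in\Lambda_{2}}P_{i}$ (with $\infty$ removed and degree adjusted) coming from a partition $\mathbf{\Lambda}$ as in Equation \eqref{charLambda}, so that $\mathfrak{e}=u_{P_{\infty}}(D)+K_{P_{\infty}}=e_{\mathbf{\Lambda}}$ is exactly a non-singular even characteristic of the form controlled by Theorem \ref{Thomae1Z3}. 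With this choice the factor $\theta^{3}(-\mathfrak{e},\tau)/\theta^{3}(u_{P_{\infty}}(P_{k})-\mathfrak{e},\tau)$ in Proposition \ref{quottheta} becomes a ratio of two such $Z_{3}$ theta constants (after using Equation \eqref{transchar} to move the shift by $u_{P_{\infty}}(P_{k})$ into a characteristic, which replaces $\mathbf{\Lambda}$ by the partition $\tilde{\mathbf{\Lambda}}$ obtained by moving the index $k$ between the blocks $\Lambda_{0},\Lambda_{1},\Lambda_{2}$), both of which are known explicitly.

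Next I would evaluate the right-hand side $\prod_{r=1}^{g}\frac{\lambda_{k}-z(Q_{r})}{\lambda_{k}-z(R_{r})}$ for this specific $D$. The points $R_{r}$ are the branch points $P_{i}$ (with multiplicities) indexed by $\Lambda_{0}\cup\Lambda_{1}\cup\Lambda_{2}$, so the denominator product $\prod_{r}(\lambda_{k}-z(R_{r}))$ becomes an explicit product of differences $\lambda_{k}-\lambda_{i}$, with appropriate exponents $1$ and $2$ according to the block, and this product will assemble (together with the $\Delta(\Lambda_{a},\Lambda_{b})$ factors coming from the Thomae constants) into the single factor $f'(\lambda_{k})=\prod_{i\neq k}(\lambda_{k}-\lambda_{i})$ predicted by the statement. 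The mechanism is exactly as in the proof of Lemma \ref{thetaquotient}: applying Theorem \ref{Thomae1Z3} to both $\theta^{3}[e_{\tilde{\mathbf{\Lambda}}}]$ and $\theta^{3}[e_{\mathbf{\Lambda}}]$ produces ratios of the $\Delta(\Lambda_{a})$ and $\Delta(\Lambda_{a},\Lambda_{b})$ quantities, and moving the index $k$ from one block to another changes precisely those factors involving $\lambda_{k}$; the net effect after dividing by the branch-point denominator is the clean power of $f'(\lambda_{k})$, with all the bulk $\Delta$-terms cancelling and only a root of unity surviving.

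I would then argue that the left-hand side of Proposition \ref{quotforZ3}, as a function of the $Q_{r}$, is independent of the auxiliary divisor $D$ once we divide through by $\prod_{r}(\lambda_{k}-z(R_{r}))$ and absorb the exponential factor $\mathbf{e}[\nu_{k}^{t}(\mathfrak{e}-e)]$: indeed Proposition \ref{quottheta} shows the whole expression equals the right-hand product, so the ratio in Proposition \ref{quotforZ3} is forced to be a constant (depending on $k$ but not on the $Q_{r}$) times $\prod_{r}(\lambda_{k}-z(Q_{r}))$. Evaluating that constant at one convenient configuration of the $Q_{r}$ — namely taking the $Q_{r}$ to be branch points realizing a partition complementary to the one defining $D$ — reduces the constant, via Theorem \ref{Thomae1Z3}, to $\epsilon_{k}/f'(\lambda_{k})$ for some root of unity $\epsilon_{k}$. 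The cube everywhere forces the relevant roots of unity to be $12$th roots of unity rather than merely $4$th roots as in the hyper-elliptic case, which accounts for the order of $\epsilon_{k}$.

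The main obstacle I anticipate is the careful bookkeeping of the exponential factor $\mathbf{e}[\nu_{k}^{t}(\mathfrak{e}-e)]$ together with the quadratic and linear exponents produced by Equation \eqref{transchar} when rewriting the shifted theta functions as genuine $Z_{3}$ theta constants. Since $u_{P_{\infty}}(P_{k})$ is a point of exact order $3$ (as noted after Proposition \ref{quottheta}) and never a half-characteristic, these exponents are genuine cube-root phases, and one must verify that after cubing, the lattice vector $\nu_{k}$ and the characteristic shifts conspire so that all the $\zeta$-dependence cancels and only a $12$th root of unity remains. Tracking these phases precisely enough to certify that $\epsilon_{k}$ is a root of unity of the stated order — rather than, say, an uncontrolled constant — is the delicate step; the rest is the branch-point combinatorics of the $\Delta$-factors, which is routine once Theorem \ref{Thomae1Z3} is in hand.
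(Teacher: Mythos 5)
Your proposal is correct and follows essentially the same route as the paper: the paper also takes $D$ to be the divisor of a partition $\mathbf{\Lambda}$ with $|\Lambda_{0}|=|\Lambda_{1}|=|\Lambda_{2}|=q$, $\infty\in\Lambda_{2}$, $k\in\Lambda_{0}$ (so $\mathfrak{e}=e_{\mathbf{\Lambda}}$), rewrites the shifted thetas via Equation \eqref{transchar} using $u_{P_{\infty}}(P_{k})-\mathfrak{e}=e_{\widetilde{\mathbf{\Lambda}}}$ from Equation \eqref{TPkDelta}, and cancels the $\Delta$-terms of Theorem \ref{Thomae1Z3} against $F_{+}(\lambda_{k})F_{-}(\lambda_{k})$ to obtain $f'(\lambda_{k})$. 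The ``delicate step'' you flag is handled in the paper exactly as you anticipate, by recalling from Abel's Theorem that $3u_{P_{\infty}}(P_{k})=\tau\nu_{k}+I\mu_{k}$, so that $\mathbf{e}[\nu_{k}^{t}(\mathfrak{e}-e)]$ absorbs the $e$-dependent exponents from the triple application of Equation \eqref{transchar}, leaving a 3rd root of unity (your final constant-evaluation step is unnecessary, since the constant is already explicit as $\theta[e_{\widetilde{\mathbf{\Lambda}}}]^{3}(0,\tau)\big/\hat{\varepsilon}\,\theta[e_{\mathbf{\Lambda}}]^{3}(0,\tau)F_{+}(\lambda_{k})F_{-}(\lambda_{k})$).
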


\begin{proof}
Take the divisor $D$ from Proposition \ref{quottheta} to be the one arising from omitting $P_{\infty}$ from the divisor associated with a partition $\mathbf{\Lambda}$, in which \[|\Lambda_{0}|=|\Lambda_{1}|=|\Lambda_{2}|=q,\quad\infty\in\Lambda_{2},\quad\mathrm{and}\quad k\in\Lambda_{0},\quad\mathrm{so\ that}\quad\mathfrak{e}=e_{\mathbf{\Lambda}}.\] We recall from the proof of Abel's Theorem in \cite{[FK]} that the vector $\nu_{\lambda}$ from Lemma \ref{dif3kind} appears in the equation
\[u(z^{*}\lambda-z^{*}\infty)=\tau\nu_{\lambda}+I\mu_{\lambda}\mathrm{\ for\ }\mu_{\lambda}\in\mathbb{Z}^{g},\mathrm{\ and\ with\ }\lambda=\lambda_{k}\mathrm{\ the\ left\ hand\ side\ is\ }3u_{P_{\infty}}(P_{k}).\] Then a triple application of Equation \eqref{transchar} combines with the evenness of $\theta$ to show that
\[\frac{\theta^{3}\big(u_{P_{\infty}}(P_{k})-e,\tau\big)\theta^{3}(-\mathfrak{e},\tau)}{\theta^{3}\big(u_{P_{\infty}}(P_{k})-\mathfrak{e},\tau\big)\theta^{3}(-e,\tau)}\cdot\mathbf{e}[\nu_{k}^{t}(\mathfrak{e}-e)]=
\hat{\varepsilon}\frac{\theta\big[u_{P_{\infty}}(P_{k})\big]^{3}(-e,\tau)\theta^{3}[\mathfrak{e}](0,\tau)}{\theta\big[u_{P_{\infty}}(P_{k})-\mathfrak{e}\big]^{3}(0,\tau)\theta^{3}(-e,\tau)}\] for some 3rd root of unity $\hat{\varepsilon}$. With $e=\sum_{r=1}^{g}u_{P_{\infty}}(Q_{r})+K_{P_{\infty}}$ this is the desired quotient from the left hand side, multiplied by some constants, and on the right hand side of Proposition \ref{quottheta} we get the desired product, divided by $F_{+}(\lambda_{k})F_{-}(\lambda_{k})$ in the notation of Equation \eqref{twoprods}.

Now, the calculations from \cite{[FZ]} (involving the operator denoted there by $T_{P_{k}}$) imply that
\begin{equation}
u_{P_{\infty}}(P_{k})-\mathfrak{e}=e_{\widetilde{\mathbf{\Lambda}}}\mathrm{\ for\ }\widetilde{\mathbf{\Lambda}}\mathrm{\ with\ }\widetilde{\Lambda}_{2}=\Lambda_{0}\cup\{\infty\}\setminus\{k\},\widetilde{\Lambda}_{1}=\Lambda_{1},\mathrm{\ and\ }\widetilde{\Lambda}_{0}=\Lambda_{2}\cup\{k\}\setminus\{\infty\}. \label{TPkDelta}
\end{equation}
Therefore the desired left hand side equals the product on the right hand side times the constant $\frac{\theta[e_{\widetilde{\mathbf{\Lambda}}}]^{3}(0,\tau)/\hat{\varepsilon}}{\theta[e_{\mathbf{\Lambda}}]^{3}(0,\tau)F_{+}(\lambda_{k})F_{-}(\lambda_{k})}$. We substitute the values of the two theta constants that are given in Theorem \ref{Thomae1Z3} (raised to the 3rd power), obtain another 4th root of unity, and verify that after the cancelations every term of the form $\lambda_{k}-\lambda_{i}$ for $i \neq k$ (and $i\neq\infty$) ends up appearing the denominator with the power 1. Since the resulting product is precisely $f'(\lambda_{k})$, this completes the proof of the proposition.
\end{proof}

\smallskip

The Thomae derivative formulae are associated, as in the hyper-elliptic case, with positive divisors $\Delta$ of degree $g-1$ that are supported on the branch points and whose images in $J(X)$ represent simple zeros of $\theta$. We characterize these divisors on our $Z_{3}$ curve as follows.
\begin{thm}
Let $\Xi$ be positive divisor degree $g-1$ on $X$ whose support consists only of branch points. Then $u_{P_{\infty}}(\Xi)+K_{P_{\infty}}$ lies on the non-singular locus of the theta divisor if and only if $\Xi$ is of the form $2\sum_{i\in\Lambda_{2}}P_{i}+\sum_{i\in\Lambda_{1}}P_{i}$ for a partition $\mathbf{\Lambda}$ of the set of branch point into the sets $\Lambda_{0}$, $\Lambda_{1}$, and $\Lambda_{2}$ that satisfies one of the two cardinality conditions \[\mathrm{either\ }|\Lambda_{2}|=|\Lambda_{1}|=q-1\mathrm{\ and\ }|\Lambda_{0}|=q+2,\quad\mathrm{or}\quad|\Lambda_{2}|=q-2\mathrm{\ and\ }|\Lambda_{2}|=|\Lambda_{2}|=q+1.\] \label{divsforder}
\end{thm}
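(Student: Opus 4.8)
The plan is to convert the geometric condition into a purely numerical count of holomorphic differentials. By Riemann's theorem the point $u_{P_\infty}(\Xi)+K_{P_\infty}$ lies on the theta divisor for \emph{every} effective divisor $\Xi$ of degree $g-1$, and by the Riemann singularity theorem the multiplicity of $\theta$ there equals $h^0(\mathcal{O}_X(\Xi))$. Since $\deg\Xi=g-1$, Riemann--Roch gives $h^0(\mathcal{O}_X(\Xi))=i(\Xi)$, the number of linearly independent holomorphic differentials vanishing on $\Xi$ (the index of speciality). Hence the point lies on the non-singular locus precisely when $i(\Xi)=1$, and the whole statement reduces to deciding for which branch divisors $\Xi$ of degree $g-1$ one has $i(\Xi)=1$. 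It is convenient here that $\frac{\mathrm{d}z}{w^{2}}$ has divisor $(2g-2)P_\infty$, so that $K_X\sim(2g-2)P_\infty$ and $i(\Xi)$ is most easily computed via differentials.

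First I would dispose of large multiplicities. If a finite branch point $P_i$ occurs in $\Xi$ with multiplicity at least $3$, the function $1/(z-\lambda_i)$, whose divisor is $3P_\infty-3P_i$, lies in $L(\Xi)$; if $P_\infty$ occurs with multiplicity at least $3$, then $z-\lambda_i$ does. In either case $h^0(\mathcal{O}_X(\Xi))\ge2$ and the point is singular. Thus a non-singular point forces all multiplicities to be at most $2$, so automatically $\Xi=2\sum_{i\in\Lambda_2}P_i+\sum_{i\in\Lambda_1}P_i$, where $\Lambda_2$ and $\Lambda_1$ are the branch points of multiplicity $2$ and $1$. Writing $a=|\Lambda_2|$ and $b=|\Lambda_1|$, the degree condition becomes $2a+b=g-1=3q-3$, whence $|\Lambda_0|=a+3$.

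The core step is computing $i(\Xi)$ for such $\Xi$. I would use the basis of Proposition \ref{holdif}, writing a holomorphic differential as $\omega=\frac{g_2(z)}{w^{2}}\mathrm{d}z+\frac{g_1(z)}{w}\mathrm{d}z$ with $\deg g_2\le2q-2$ and $\deg g_1\le q-2$. At a finite branch point $P_i$, with uniformizer $t$ satisfying $t^{3}=z-\lambda_i$ (so $w\sim t$ and $\mathrm{d}z\sim t^{2}\,\mathrm{d}t$), the two summands have orders congruent to $0$ and $1$ modulo $3$ respectively, so their leading terms cannot cancel and $\mathrm{ord}_{P_i}\omega$ is the minimum of the two orders. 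Consequently $\mathrm{ord}_{P_i}\omega\ge2$ forces $g_2(\lambda_i)=0$ and $g_1(\lambda_i)=0$, whereas $\mathrm{ord}_{P_i}\omega\ge1$ forces only $g_2(\lambda_i)=0$. These vanishing conditions, being imposed at distinct points on polynomials, are independent, giving
\[i(\Xi)=\max\bigl(0,\,(2q-1)-(a+b)\bigr)+\max\bigl(0,\,(q-1)-a\bigr)=\max(0,a-q+2)+\max(0,q-1-a),\]
where the last equality uses $a+b=3q-3-a$. Setting $i(\Xi)=1$ gives exactly $a=q-1$ (forcing $b=q-1$ and $|\Lambda_0|=q+2$) or $a=q-2$ (forcing $b=q+1$ and $|\Lambda_0|=q+1$), which are the two asserted cardinality conditions.

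The point I expect to require the most care is the case $P_\infty\in\mathrm{supp}\,\Xi$, where one must also impose vanishing of $\omega$ at $P_\infty$, lowering the admissible degrees of $g_1$ and $g_2$. Computing $\mathrm{ord}_{P_\infty}\frac{z^{m}\mathrm{d}z}{w^{2}}=6q-6-3m$ and $\mathrm{ord}_{P_\infty}\frac{z^{m}\mathrm{d}z}{w}=3q-5-3m$ shows that the two families again occupy distinct residues modulo $3$, so the same minimum-of-orders reasoning applies; moreover the drop in the dimension of the space of admissible $g_1$ or $g_2$ is exactly compensated by the loss of one finite vanishing condition. Hence the displayed formula for $i(\Xi)$ holds uniformly whether $\infty\in\Lambda_0$, $\Lambda_1$, or $\Lambda_2$, which makes the characterization independent of the location of $P_\infty$ and completes the argument.
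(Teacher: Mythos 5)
Your proof is correct and follows essentially the same route as the paper: both reduce non-singularity to $i(\Xi)=1$ (the paper phrases this via the Riemann Vanishing Theorem and Riemann--Roch rather than the Riemann singularity theorem), dispose of multiplicities $\geq 3$ through the functions $1/(z-\lambda_i)$ or $z-\lambda_i$, and then count the differentials vanishing on $\Xi$ by splitting them into the $\mathrm{d}z/w^{2}$ and $\mathrm{d}z/w$ families and imposing independent vanishing conditions on the two polynomials. The only differences are cosmetic: you prove the splitting directly from the orders-mod-3 argument where the paper cites Lemma 5.2 of \cite{[KZ1]}, and you treat the case $P_\infty\in\mathrm{supp}\,\Xi$ explicitly, which the paper leaves to the reader.
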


\begin{proof}
The extended version of the Riemann Vanishing Theorem and the Riemann--Roch Theorem imply that for a positive divisor $\Xi$ on $X$, the zero $u_{P_{\infty}}(\Xi)+K_{P_{\infty}}$ of $\theta$ is simple if and only if there is no non-constant function whose poles are bounded by $\Xi$, or equivalently there is only one canonical divisor from which subtracting $\Xi$ yields a positive divisor. For divisors supported on the branch points (i.e., divisors that are invariant under the cyclic Galois group of $z:X\to\mathbb{CP}^{1}$), the first condition implies that no branch point may appear in $\Xi$ to order 3 or higher. Hence $\Xi$ can be represented by such a partition $\mathbf{\Lambda}$. Moreover, Lemma 5.2 of \cite{[KZ1]} implies that the differential whose divisor is larger than $\Xi$ must be either of the form $p(z)\frac{dz}{w^{2}}$ for a polynomial of degree at most $2q-1$, or of the form $p(z)\frac{dz}{w}$ where $p$ has degree at most $q-1$. Therefore in the first case we need that no divisor of the form $\mathrm{div}\big(p(z)\frac{dz}{w}\big)$ will be larger than $\Xi$, and only one divisor $\mathrm{div}\big(p(z)\frac{dz}{w^{2}}\big)$ will have that property. One easily verifies that this implies \[|\Lambda_{2}| \geq q-1\mathrm{\ and\ }|\Lambda_{1}|+|\Lambda_{2}|=2q-2,\mathrm{\ whence\ }|\Lambda_{2}|=|\Lambda_{1}|=q-1\mathrm{\ since\ }\deg\Xi=g-1=3q-3\] and we are in the first situation (because $\Lambda_{0}$ is the complement in a set of cardinality $3q$). In the second case no divisor $\mathrm{div}\big(p(z)\frac{dz}{w^{2}}\big)$ can exceed $\Xi$ and only one of the form $\mathrm{div}\big(p(z)\frac{dz}{w}\big)$ does so. Here we get \[|\Lambda_{1}|+|\Lambda_{2}|\geq2q-1\mathrm{\ and\ }|\Lambda_{2}|=q-2,\mathrm{\ so\ that\ }|\Lambda_{1}|=q+1\mathrm{\ because\ }\deg\Xi=g-1=3q-3\] again, and we are in the second situation. This proves the theorem.
\end{proof}
Note that Theorem \ref{divsforder} assumes implicitly that $q\geq2$, hence $g=3q-2\geq4$, as we shall henceforth do. The case with $q=1$ and $g=1$ is degenerate, and the only divisor $\Xi$ in that theorem is the trivial one, which is of the first type there. In that case Theorem \ref{Thomae2Z3t1} below will produce the well-known relation between the Jacobi theta derivative formula and the discriminant of the associated elliptic curve.

\smallskip

Theorem \ref{divsforder} implies that there are two types of divisors $\Xi$ for which we would like to prove a Thomae derivative formula. As in Theorem \ref{Thomae2Z2}, for each type we shall need to assume that $\infty$ lies in a particular set for our method to work. We shall later indicate, as in the end of Section \ref{HypEll}, what happens when $\infty$ lies in a different set.

We begin with the divisors of the first type, where for a partition $\mathbf{\Lambda}$ we set $\sigma_{p}(\Lambda_{1}\cup\Lambda_{2})$ to be the $p$th symmetric function based on the $z$-values of the branch points with indices in $\Lambda_{1}\cup\Lambda_{2}$ (as usual).
\begin{thm}
Let $\Xi$ be the positive divisor that is associated with the partition $\mathbf{\Lambda}$, in which
\[|\Lambda_{2}|=|\Lambda_{1}|=q-1,\quad|\Lambda_{0}|=q+2,\quad\mathrm{and}\quad\infty\in\Lambda_{0},\] and set $e_{\mathbf{\Lambda}}$ to be as in Equation \eqref{charLambda}. Then for $1 \leq s \leq g$ we get the equality
\[\frac{\partial}{\partial\zeta_{s}}\theta[e_{\mathbf{\Lambda}}](\zeta,\tau)\bigg|_{\zeta=0}=
\Delta(\Lambda_{0})^{1/2}\Delta(\Lambda_{1})^{1/2}\Delta(\Lambda_{2})^{1/2}\Delta(\Lambda_{0},\Lambda_{1})^{1/6}\Delta(\Lambda_{1},\Lambda_{2})^{1/6}\Delta(\Lambda_{2},\Lambda_{0})^{1/6}\times\] \[\times\frac{\epsilon_{\mathbf{\Lambda}}\alpha}{3}\sqrt{\det C}\sum_{l=1}^{2q-1}(-1)^{2q-1-l}\sigma_{2q-1-l}(\Lambda_{1}\cup\Lambda_{2})C_{ls},\] where $\alpha$ is the constant from Theorem \ref{Thomae1Z3}, the matrix $C$ is defined in Equation \eqref{matrices}, and $\epsilon_{\mathbf{\Lambda}}$ is a 36th root of unity. \label{Thomae2Z3t1}
\end{thm}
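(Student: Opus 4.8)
The plan is to mimic the proof of Theorem \ref{Thomae2Z2} step by step, with Proposition \ref{quotforZ3} in the role of Lemma \ref{thetaquotient} and Lemma \ref{pardersmult} in the role of Lemma \ref{dQrdzetas}. Since $\theta[e_{\mathbf{\Lambda}}]$ has only a simple zero, I would first extract the cube root of the identity in Proposition \ref{quotforZ3}, obtaining (for a fixed local branch)
\[\frac{\theta[u_{P_{\infty}}(P_{k})]\big(-\sum_{r}u_{P_{\infty}}(Q_{r})-K_{P_{\infty}}\big)}{\theta\big(-\sum_{r}u_{P_{\infty}}(Q_{r})-K_{P_{\infty}}\big)}=\Big(\tfrac{\epsilon_{k}}{f'(\lambda_{k})}\Big)^{1/3}\prod_{r=1}^{g}\big(\lambda_{k}-z(Q_{r})\big)^{1/3},\]
and differentiate it with respect to $\zeta_{s}$ as in Equation \eqref{derivative}. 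The governing limit is $\Delta=\sum_{r}Q_{r}\to\Xi+P_{k}$ for an index $k\in\Lambda_{0}$, $k\neq\infty$, in which the points of $\Lambda_{2}$ are doubled and those of $\Lambda_{1}\cup\{k\}$ are simple---exactly the configuration of Lemma \ref{pardersmult}. Writing $e=\sum_{r}u_{P_{\infty}}(Q_{r})+K_{P_{\infty}}$, Equation \eqref{transchar} carries the numerator's argument to $-e+u_{P_{\infty}}(P_{k})=-e_{\mathbf{\Lambda}}$, so the numerator degenerates to the vanishing odd constant. As in Equation \eqref{tosub}, differentiating the quotient and using this vanishing then expresses $\frac{\partial}{\partial\zeta_{s}}\theta[e_{\mathbf{\Lambda}}]$ as the non-vanishing constant $\theta(-e)$ times the $\zeta_{s}$-derivative of the explicit product.

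The derivative of that product hinges on the cube root near the node $Q_{m}\to P_{k}$ (with $Q_{m}$ the simple point tending to $P_{k}$): there $\lambda_{k}-z(Q_{m})=-t_{m}^{3}=-\alpha_{m}^{3}$, so the cube root of this factor is linear in $\alpha_{m}$ and smooth, while every other factor tends to a non-zero constant. This is the $Z_{3}$ analogue of the cancellation between $\sqrt{F(\lambda_{k})}$ and $w(Q_{m})$ in Equations \eqref{derivative}--\eqref{QmtoPk}, and it forces a finite non-zero limit in which only the differentiation of $\alpha_{m}$ survives. Lemma \ref{pardersmult} then gives $\frac{\partial\alpha_{m}}{\partial\zeta_{s}}=\tfrac{f'(\lambda_{k})^{2/3}}{3F_{+}'(\lambda_{k})}\sum_{l=1}^{2q-1}(-1)^{2q-1-l}\sigma_{2q-1-l}(\Lambda_{1}\cup\Lambda_{2})C_{ls}$, because the functions $\sigma^{(m)}$ over the remaining distinct branch points are precisely $\sigma_{2q-1-l}(\Lambda_{1}\cup\Lambda_{2})$. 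This already reproduces the symmetric-function sum and the factor $\tfrac13$ in the statement.

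The one genuinely new difficulty, absent in the hyper-elliptic case, is the surviving denominator $\theta(-e)$. In Theorem \ref{Thomae2Z2} a single shift by $u_{P_{\infty}}(P_{k})$ sends the odd $m=1$ characteristic to an even $m=0$ one evaluated by Theorem \ref{Thomae1Z2}; here, however, $e_{\mathbf{\Lambda}}+u_{P_{\infty}}(P_{k})$ is the \emph{unbalanced} characteristic with $(|\Lambda_{1}|,|\Lambda_{2}|,|\Lambda_{0}|)=(q,q-1,q+1)$, which is non-special (so $\theta(-e)\neq0$) but falls outside the balanced partitions of Theorem \ref{Thomae1Z3}: the first-type odd characteristic lies two shifts away from any balanced one. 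I would evaluate this constant by a second, undifferentiated application of Proposition \ref{quotforZ3} at the same divisor $\Xi+P_{k}$ but with a fresh shift $P_{j}$, $j\in\Lambda_{0}\setminus\{k,\infty\}$: this moves the partition to the balanced $(q,q,q)$ with $\Lambda_{1}''=\Lambda_{1}\cup\{k\}$, $\Lambda_{2}''=\Lambda_{2}\cup\{j\}$, $\Lambda_{0}''=\Lambda_{0}\setminus\{k,j\}$, whose theta constant Theorem \ref{Thomae1Z3} computes (for this placement of $\infty$) and which supplies the $\alpha\sqrt{\det C}$ prefactor together with a product of $\Delta$-terms.

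Finally I would reassemble. The value of $\theta[e_{\mathbf{\Lambda}''}]$, the two explicit products $\prod_{i}(\lambda_{k}-\lambda_{i})$ and $\prod_{i}(\lambda_{j}-\lambda_{i})$, and the powers of $f'(\lambda_{k})$, $f'(\lambda_{j})$, $F_{+}'(\lambda_{k})$ must recombine into $\Delta(\Lambda_{0})^{1/2}\Delta(\Lambda_{1})^{1/2}\Delta(\Lambda_{2})^{1/2}\Delta(\Lambda_{0},\Lambda_{1})^{1/6}\Delta(\Lambda_{1},\Lambda_{2})^{1/6}\Delta(\Lambda_{2},\Lambda_{0})^{1/6}$. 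Since the left-hand side is intrinsic, the auxiliary indices $j$ and $k$ must cancel, a useful consistency check on this $\Delta$-bookkeeping; and all exponential prefactors from Equation \eqref{transchar}, the cube-root-of-unity choices, and the $12$th roots $\epsilon_{k}$, $\epsilon_{j}$, $\epsilon_{\mathbf{\Lambda}''}$ are independent of $s$ and collect into a single $36$th root of unity $\epsilon_{\mathbf{\Lambda}}$ (the order $36$ coming from cube-rooting $12$th-root data). I expect this recombination of fractional $\Delta$-powers, together with the auxiliary evaluation of the unbalanced constant, to be the main work of the proof.
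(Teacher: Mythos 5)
Your core strategy is exactly the paper's: degenerate Proposition \ref{quotforZ3} at a divisor ``$\Xi$ plus one simple extra branch point $P_{k}$, $k\in\Lambda_{0}$'', take cube roots, note that the numerator becomes the vanishing constant $\theta[e_{\mathbf{\Lambda}}]$ while the factor $(\lambda_{k}-z(Q_{m}))^{1/3}=-t_{m}$ forces only $\partial\alpha_{m}/\partial\zeta_{s}$ to survive, and then substitute Lemma \ref{pardersmult}; your identification of $F_{+}$, of the surviving $\sigma_{2q-1-l}(\Lambda_{1}\cup\Lambda_{2})$, and of the factor $\tfrac13$ coincides with the paper's. The only structural deviation is the choice of degenerate divisor: you take $\Delta=\Xi+P_{k}$ (doubling $\Lambda_{2}$, simple points $\Lambda_{1}\cup\{k\}$), whereas the paper doubles $\Lambda_{1}$ and takes $\Lambda_{2}\cup\{k\}$ simple (its $\widetilde{\Lambda}_{2}=\Lambda_{1}\cup\{\infty\}$, $\widetilde{\Lambda}_{1}=\Lambda_{2}\cup\{k\}$, $\widetilde{\Lambda}_{0}=\Lambda_{0}\setminus\{k,\infty\}$). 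Both choices are legitimate and lead to the same recombination of $\Delta$-terms.

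The flaw is that your ``genuinely new difficulty'' does not exist, and the paragraph built around it is wrong. Since $u_{P_{\infty}}(P_{\infty})=0$, the index $\infty$ contributes to the \emph{cardinality} of a set in a partition but nothing to the divisor or the characteristic; placing it correctly, your denominator characteristic $e=e_{\mathbf{\Lambda}}+u_{P_{\infty}}(P_{k})$ equals $e_{\mathbf{\Lambda}''}$ for the \emph{balanced} partition $\Lambda_{1}''=\Lambda_{1}\cup\{k\}$, $\Lambda_{2}''=\Lambda_{2}\cup\{\infty\}$, $\Lambda_{0}''=\Lambda_{0}\setminus\{k,\infty\}$, all of cardinality $q$ and with $\infty\in\Lambda_{2}''$ precisely as Theorem \ref{Thomae1Z3} requires. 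Your count $(q,q-1,q+1)$ simply puts $\infty$ in the wrong set; there is no ``two shifts away'' obstruction. Consequently $\theta(-e)$ is evaluated by Theorem \ref{Thomae1Z3} directly, and one checks that its $\Delta$-terms multiplied by your coefficient $\tfrac{f'(\lambda_{k})^{1/3}}{3F_{+}'(\lambda_{k})}\prod_{i\in\Lambda_{1}}(\lambda_{k}-\lambda_{i})^{1/3}\prod_{i\in\Lambda_{2}}(\lambda_{k}-\lambda_{i})^{2/3}$ reproduce exactly the asserted $\Delta$-terms for $\mathbf{\Lambda}$. Your auxiliary $P_{j}$-shift is therefore unnecessary; it would in fact terminate (by the analogue of Equation \eqref{TPkDelta} the shifted characteristic is again balanced with $\infty$ in its $\Lambda_{2}$-part, although the sets $\Lambda_{2}\cup\{j\}$ and $\Lambda_{0}\setminus\{k,j\}$ play the roles opposite to the ones you assigned them), but it adds an extra application of Proposition \ref{quotforZ3}, an extra Thomae evaluation, and a $j$-cancellation that the actual proof never needs. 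With that paragraph deleted and the balanced identification inserted, your outline is the paper's proof.
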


\begin{proof}
Choose an index $\infty \neq k\in\Lambda_{0}$, and set $\Delta$ to be the divisor corresponding to the partition $\widetilde{\mathbf{\Lambda}}$ for which
\[\widetilde{\Lambda}_{2}=\Lambda_{1}\cup\{\infty\},\ \widetilde{\Lambda}_{1}=\Lambda_{2}\cup\{k\},\mathrm{\ and\ }\widetilde{\Lambda}_{0}=\Lambda_{0}\setminus\{k,\infty\},\mathrm{\ so\ that\ }|\widetilde{\Lambda}_{2}|=|\widetilde{\Lambda}_{1}|=|\widetilde{\Lambda}_{0}|=q\] (and $\infty\in\widetilde{\Lambda}_{2}$). We take the points $Q_{r}$, $1 \leq r \leq g$ to be near the branch points constructing $\Delta$ (without $P_{\infty}$), ordered as in Lemma \ref{pardersmult}, and observe that a calculation similar to Equation \eqref{TPkDelta} compares $u_{P_{\infty}}(P_{k})-e_{\widetilde{\mathbf{\Lambda}}}$ to $e_{\mathbf{\Lambda}}$. It follows that when each of the points $Q_{r}$ equals the associated branch point (i.e., when $t_{r}(Q_{r})=0$ for every such $r$), the numerator on the left hand side of Proposition \ref{quotforZ3} vanishes, while the denominator does not. We omit the exponent 3, and use Equation \eqref{transchar} and the evenness of theta in order to replace the denominator by $\theta[e_{\widetilde{\mathbf{\Lambda}}}](\zeta,\tau)$ and the numerator by $\theta[e_{\mathbf{\Lambda}}](-\zeta,\tau)$ for some small $\zeta$ (the latter vanishing at $\zeta=0$), multiplied by 9th root of unity $\mu$.

Let $q \leq m\leq2q-1$ be the index for which $Q_{m}$ lies near $P_{k}$, and then the third root of the right hand side in Proposition \ref{quotforZ3} becomes
\[\frac{\epsilon_{k}^{1/3}t_{m}(Q_{m})}{f'(\lambda_{k})^{1/3}}\bigg[\prod_{i\in\Lambda_{1}}(\lambda_{k}-\lambda_{i})^{2/3}\prod_{i\in\Lambda_{2}}(\lambda_{k}-\lambda_{i})^{1/3}+\sum_{r=1}^{g}O\big(t_{r}(Q_{r})^{3}\big)\bigg].\] We therefore differentiate both sides with respect to $\zeta_{s}$ at $\zeta=0$, where the left hand side gives us the required derivative, multiplied by $-\mu\big/\theta[e_{\widetilde{\mathbf{\Lambda}}}](0,\tau)$. On the other hand, the factor $t_{m}(Q_{m})$ implies that at $\zeta=0$ (hence $t_{m}(Q_{m})=0$) the derivative in question is simply \[\frac{\epsilon_{k}^{1/3}}{f'(\lambda_{k})^{1/3}}\prod_{i\in\Lambda_{1}}(\lambda_{k}-\lambda_{i})^{2/3}\prod_{i\in\Lambda_{2}}(\lambda_{k}-\lambda_{i})^{1/3}\cdot\frac{\partial t_{m}(Q_{m})}{\partial\zeta_{s}}.\]

For evaluating the last multiplier we invoke Lemma \ref{pardersmult}, in which $r=m$ and $i=k$, the points in question are $P_{i}$ with $i\in\Lambda_{1}\cup\Lambda_{2}\cup\{k\}$, and the polynomial $F_{+}(z)$ is the product of $z-\lambda_{i}$ over the same set of $i$'s. Hence the term denoted by $\sigma_{2q-1-l}^{(r)}(Q_{1},\ldots,Q_{2q-1})$ there is simply $\sigma_{2q-1-l}(\Lambda_{1}\cup\Lambda_{2})$, and collecting the terms of the form $\lambda_{k}-\lambda_{i}$ yields
\[\frac{f'(\lambda_{k})^{1/3}}{3F_{+}'(\lambda_{k})}\prod_{i\in\Lambda_{1}}(\lambda_{k}-\lambda_{i})^{2/3}\prod_{i\in\Lambda_{2}}(\lambda_{k}-\lambda_{i})^{1/3}=
\prod_{i\in\Lambda_{0}\setminus\{k,\infty\}}(\lambda_{k}-\lambda_{i})^{1/3}\Bigg/3\prod_{i\in\Lambda_{2}}(\lambda_{k}-\lambda_{i})^{1/3}.\] We then multiply by the denominator $\theta[e_{\widetilde{\mathbf{\Lambda}}}](0,\tau)$, express it via Theorem \ref{Thomae1Z3}, observe that the $\Delta$-terms from that theorem (with $\widetilde{\mathbf{\Lambda}}$) combine with the latter quotient to the desired $\Delta$-terms (associated with $\mathbf{\Lambda}$) because of the relations between these partitions, and merge all the roots of unity into $\epsilon_{\mathbf{\Lambda}}$. This completes the proof of the theorem.
\end{proof}

\smallskip

The result for the other type of divisors is as follows.
\begin{thm}
Take now $\Xi$ and $\mathbf{\Lambda}$ to be such that
\[|\Lambda_{2}|=q-2\quad\mathrm{and}\quad|\Lambda_{1}|=|\Lambda_{0}|=q+1,\quad\mathrm{but\ now\ with}\quad\infty\in\Lambda_{1},\] with $e_{\mathbf{\Lambda}}$ as in Equation \eqref{charLambda} again. The formula for $1 \leq s \leq g$ is now \[\frac{\partial}{\partial\zeta_{s}}\theta[e_{\mathbf{\Lambda}}](\zeta,\tau)\bigg|_{\zeta=0}=
\Delta(\Lambda_{0})^{1/2}\Delta(\Lambda_{1})^{1/2}\Delta(\Lambda_{2})^{1/2}\Delta(\Lambda_{0},\Lambda_{1})^{1/6}\Delta(\Lambda_{1},\Lambda_{2})^{1/6}\Delta(\Lambda_{2},\Lambda_{0})^{1/6}\times\] \[\times\frac{2\epsilon_{\mathbf{\Lambda}}\alpha}{3}\sqrt{\det C}\sum_{l=2q}^{3q-2}(-1)^{3q-2-l}\sigma_{3q-2-l}(\Lambda_{2})C_{ls},\] with $\alpha$, $C$, and $\epsilon_{\mathbf{\Lambda}}$ as in Theorem \ref{Thomae2Z3t1}. \label{Thomae2Z3t2}
\end{thm}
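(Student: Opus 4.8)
The plan is to mimic the proof of Theorem \ref{Thomae2Z3t1}, the one essential new feature being that the branch point $P_{k}$ at which we localize is now a \emph{doubled} point of the auxiliary divisor, so that the relevant deformation on $\mathrm{Sym}^{g}X$ is a $\beta$-coordinate rather than an $\alpha$-coordinate. Concretely, I would pick any index $k\in\Lambda_{0}$ (automatically $k\neq\infty$, since $\infty\in\Lambda_{1}$ here) and take $\Delta$ to be the degree-$g$ divisor attached to the \emph{balanced} partition $\widetilde{\mathbf{\Lambda}}$ with $\widetilde{\Lambda}_{2}=\Lambda_{2}\cup\{k,\infty\}$, $\widetilde{\Lambda}_{1}=\Lambda_{0}\setminus\{k\}$, and $\widetilde{\Lambda}_{0}=\Lambda_{1}\setminus\{\infty\}$, each of cardinality $q$ and with $\infty\in\widetilde{\Lambda}_{2}$. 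The doubled points of $\Delta$ are then exactly $\widetilde{\Lambda}_{2}\setminus\{\infty\}=\Lambda_{2}\cup\{k\}$, a set of size $q-1$, and $P_{k}$ is one of them.

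First I would verify, by a calculation of the type leading to Equation \eqref{TPkDelta}, that $u_{P_{\infty}}(P_{k})-e_{\widetilde{\mathbf{\Lambda}}}=e_{\mathbf{\Lambda}}$. The key ingredient is the global relation $\sum_{i}u_{P_{\infty}}(P_{i})=0$ (coming from the principal divisor of $w$, together with $3u_{P_{\infty}}(P_{i})=0$), which is exactly what lets the cardinalities balance once $\infty$ is moved into $\widetilde{\Lambda}_{2}$; without it the naive coefficient count is off by one, just as in the first-type case. With this identity, Proposition \ref{quotforZ3} applies: placing the $Q_{r}$ near the branch points forming $\Delta$ (ordered as in Lemma \ref{pardersmult}, doubled points first), the numerator of its left-hand side carries the characteristic $e_{\mathbf{\Lambda}}$ and hence vanishes at the branch-point image, while the denominator is the non-vanishing balanced constant $\theta[e_{\widetilde{\mathbf{\Lambda}}}]$. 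Omitting the cube and invoking Equation \eqref{transchar} together with the evenness of $\theta$, the left-hand side becomes $\mu\,\theta[e_{\mathbf{\Lambda}}](-\zeta,\tau)\big/\theta[e_{\widetilde{\mathbf{\Lambda}}}](\zeta,\tau)$ for a suitable root of unity $\mu$.

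Next I would differentiate in $\zeta_{s}$ at $\zeta=0$. Because $P_{k}$ is doubled, \emph{two} of the $Q_{r}$ tend to $P_{k}$, so $\prod_{r}(\lambda_{k}-z(Q_{r}))$ contributes the two vanishing factors whose cube root is the product $t_{m}(Q_{m})\,t_{m}(Q_{m+2q-1})$ in the local coordinates of Equation \eqref{trzr3}; expressed through the symmetric coordinates of Equation \eqref{coordinates} this product equals $\tfrac{1}{4}(\alpha_{m}^{2}-2\beta_{m})$, which vanishes at $\zeta=0$ and whose $\zeta_{s}$-derivative there is governed by $\partial\beta_{m}/\partial\zeta_{s}$ alone (the $\alpha_{m}$-contribution drops out since $\alpha_{m}=0$ at the branch-point image). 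This is precisely the second, doubled-point formula of Lemma \ref{pardersmult}, evaluated with $i=k$: the deleted symmetric function $\sigma^{(m)}_{3q-2-l}$ reduces to $\sigma_{3q-2-l}(\Lambda_{2})$ because the remaining doubled points are exactly $\Lambda_{2}$, and $F_{-}'(\lambda_{k})=\prod_{i\in\Lambda_{2}}(\lambda_{k}-\lambda_{i})$ by Equation \eqref{twoprods}. This is the step that produces the sum over the second block $2q\le l\le 3q-2$, the symmetric functions $\sigma_{3q-2-l}(\Lambda_{2})$, and the coefficient distinguishing the present statement from Theorem \ref{Thomae2Z3t1}. The surviving (non-vanishing) factors of the product evaluate at $\zeta=0$ to $\prod_{i\in\Lambda_{0}\setminus\{k\}}(\lambda_{k}-\lambda_{i})^{1/3}\prod_{i\in\Lambda_{2}}(\lambda_{k}-\lambda_{i})^{2/3}$.

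Finally I would substitute the value of $\theta[e_{\widetilde{\mathbf{\Lambda}}}](0,\tau)$ from Theorem \ref{Thomae1Z3} and check that its six $\Delta$-factors for $\widetilde{\mathbf{\Lambda}}$, combined with the products just displayed and with $F_{-}'(\lambda_{k})$, reassemble — up to a further root of unity — into the six $\Delta$-factors for $\mathbf{\Lambda}$ in the statement; the accumulated roots of unity, which are independent of $s$, are then gathered into $\epsilon_{\mathbf{\Lambda}}$. I expect the genuinely delicate point to be the doubled-point analysis above: one must track the cube-root branches consistently and be certain that it is the $\beta$-derivative of Lemma \ref{pardersmult}, and hence its distinguishing numerical coefficient, that emerges, rather than an $\alpha$-type contribution. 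By contrast, the combinatorial rebalancing of $\widetilde{\mathbf{\Lambda}}$ and the reassembly of the $\Delta$-terms should be routine adaptations of the first-type argument.
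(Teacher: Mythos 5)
Your outline reproduces the paper's own proof essentially step for step: the same choice of $k\in\Lambda_{0}$, the same balanced partition $\widetilde{\mathbf{\Lambda}}$ with $\widetilde{\Lambda}_{2}=\Lambda_{2}\cup\{k,\infty\}$, the same identity $u_{P_{\infty}}(P_{k})-e_{\widetilde{\mathbf{\Lambda}}}=e_{\mathbf{\Lambda}}$ of the type of Equation \eqref{TPkDelta}, the same reduction of Proposition \ref{quotforZ3} via Equation \eqref{transchar} and evenness to $\mu\,\theta[e_{\mathbf{\Lambda}}](-\zeta,\tau)\big/\theta[e_{\widetilde{\mathbf{\Lambda}}}](\zeta,\tau)$, the same appeal to the $\beta$-part of Lemma \ref{pardersmult} (with $F_{-}'(\lambda_{k})=\prod_{i\in\Lambda_{2}}(\lambda_{k}-\lambda_{i})$ and $\sigma^{(m)}_{3q-2-l}=\sigma_{3q-2-l}(\Lambda_{2})$), and the same reassembly of the $\Delta$-terms through Theorem \ref{Thomae1Z3}. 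All of those steps are correct and coincide with the paper's argument.

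The genuine gap is at the one step you yourself flag as delicate but then leave as an assertion: the numerical coefficient coming out of the doubled point. Your identity $t_{m}(Q_{m})\,t_{m}(Q_{m+2q-1})=\tfrac14(\alpha_{m}^{2}-2\beta_{m})$ is correct, but differentiating it gives, at $\alpha_{m}=\beta_{m}=0$, the value $-\tfrac12\,\partial\beta_{m}/\partial\zeta_{s}$, not $\partial\beta_{m}/\partial\zeta_{s}$; this prefactor $-\tfrac12$ cancels the factor $2$ appearing in the formula for $\partial\beta_{r}/\partial\zeta_{s}$ in Lemma \ref{pardersmult}, so that your route, carried to completion, ends with the constant $\epsilon_{\mathbf{\Lambda}}\alpha/3$ (the sign being absorbable into $\epsilon_{\mathbf{\Lambda}}$) rather than the asserted $2\epsilon_{\mathbf{\Lambda}}\alpha/3$. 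In other words, the claim that this step ``produces the coefficient distinguishing the present statement from Theorem \ref{Thomae2Z3t1}'' is precisely what your own computation fails to deliver --- it delivers a cancellation instead. The paper treats this point differently: it writes the vanishing factor on the diagonal as $t_{m}(Q_{m})^{2}$ and asserts that the $\tfrac12$ of Lemma \ref{symfuncder} cancels against $\tfrac{\mathrm{d}^{2}}{\mathrm{d}t^{2}}t^{2}=2$, thereby retaining the factor $2$; note, however, that the symmetric off-diagonal extension of that factor is $ts\cdot H(t,s)$, for which the pure second $t$-derivative at the origin vanishes while the mixed one equals $H(0,0)$, so Lemma \ref{symfuncder} applied to it returns $-\tfrac12 H(0,0)$, in agreement with your $\alpha,\beta$-identity and not with that parenthetical. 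You must therefore make the coefficient computation explicit and resolve this factor of $-2$ between your symmetric-coordinate algebra and the constant in the statement; as the proposal stands, the stated formula (in particular its factor $2$) is not derived.
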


\begin{proof}
Take again some $k\in\Lambda_{0}$, and define $\Delta$ and $\widetilde{\mathbf{\Lambda}}$ with
\[\widetilde{\Lambda}_{2}=\Lambda_{2}\cup\{k,\infty\},\ \widetilde{\Lambda}_{1}=\Lambda_{0}\setminus\{k\},\mathrm{\ and\ }\widetilde{\Lambda}_{0}=\Lambda_{1}\setminus\{\infty\},\mathrm{\ and\ again\ }|\widetilde{\Lambda}_{2}|=|\widetilde{\Lambda}_{1}|=|\widetilde{\Lambda}_{0}|=q\] (with $\infty\in\widetilde{\Lambda}_{2}$). The points $Q_{r}$, $1 \leq r \leq g$ will again be in the neighborhoods of the branch points appearing in $\Delta$ (with $P_{\infty}$ omitted), with the order from Lemma \ref{pardersmult}, and once again the equality $u_{P_{\infty}}(P_{k})-e_{\widetilde{\mathbf{\Lambda}}}=e_{\mathbf{\Lambda}}$ from Equation \eqref{TPkDelta} holds. Hence with $t_{r}(Q_{r})=0$ for every $r$ the numerator from Proposition \ref{quotforZ3} vanishes and the denominator does not, and we write, using Equation \eqref{transchar}, the third root of that quotient as a 9th root of unity $\mu$ times $\theta[e_{\mathbf{\Lambda}}](-\zeta,\tau)\big/\theta[e_{\widetilde{\mathbf{\Lambda}}}](\zeta,\tau)$, with $\zeta$ close to 0 (the quotient again vanishing at $\zeta=0$).

Since now $P_{k}$ is a double point of $\Delta$, we define $1 \leq m\leq q-1$ to such that $Q_{m}$ and $Q_{m+2q-1}$ both lie near $P_{k}$, and the right hand side in Proposition \ref{quotforZ3} becomes (after taking the third root), in the coordinates $t_{r}(Q_{r})$, \[\frac{\epsilon_{k}^{1/3}t_{m}(Q_{m})^{2}}{f'(\lambda_{k})^{1/3}}\bigg[\prod_{i\in\Lambda_{2}}(\lambda_{k}-\lambda_{i})^{2/3}\prod_{i\in\Lambda_{0}\setminus\{k\}}(\lambda_{k}-\lambda_{i})^{1/3}
+\sum_{r=1}^{g}O\big(t_{r}(Q_{r})^{3}\big)\bigg].\] The derivative of the left hand side with respect to $\zeta_{s}$ at $\zeta=0$ yields the required derivative times $-\mu\big/\theta[e_{\widetilde{\mathbf{\Lambda}}}](0,\tau)$ again, while the existence of the factor $t_{m}(Q_{m})^{2}$ again shows that only an expression involving the second derivative with respect to $t_{m}(Q_{m})$ survives on the right hand side at $\zeta=0$. More precisely, applying Lemma \ref{symfuncder} and using Equation \eqref{coordinates} shows that the derivative on the left hand side at $\zeta=0$ reduces to \[\frac{\epsilon_{k}^{1/3}}{f'(\lambda_{k})^{1/3}}\prod_{i\in\Lambda_{2}}(\lambda_{k}-\lambda_{i})^{2/3}\prod_{i\in\Lambda_{0}\setminus\{k\}}(\lambda_{k}-\lambda_{i})^{1/3}\cdot\frac{\partial\beta_{m}}{\partial\zeta_{s}}\] (the factor 2 from the denominator in Lemma \ref{symfuncder} cancels with the fact that $\frac{\mathrm{d}^{2}}{\mathrm{d}t^{2}}t^{2}=2$).

The last multiplier is again evaluated by taking $r=m$ and $i=k$ in Lemma \ref{pardersmult}, where now the $q-1$ points are $P_{i}$ with $i\in\Lambda_{2}\cup\{k\}$, and $F_{-}(z)$ is $\prod_{i\in\Lambda_{2}\cup\{k\}}(z-\lambda_{i})$. Therefore $\sigma_{3q-2-l}^{(r)}(Q_{1},\ldots,Q_{q-1})$ is just $\sigma_{3q-2-l}(\Lambda_{2})$, and the powers of the terms $\lambda_{k}-\lambda_{i}$ combine to \[\frac{2}{3F_{-}'(\lambda_{k})}\prod_{i\in\Lambda_{2}}(\lambda_{k}-\lambda_{i})^{2/3}\prod_{i\in\Lambda_{0}\setminus\{k\}}(\lambda_{k}-\lambda_{i})^{1/3}=
2\prod_{i\in\Lambda_{0}\setminus\{k\}}(\lambda_{k}-\lambda_{i})^{1/3}\Bigg/3\prod_{i\in\Lambda_{2}}(\lambda_{k}-\lambda_{i})^{1/3}.\] Once again the $\Delta$-terms from the expression for the denominator $\theta[e_{\widetilde{\mathbf{\Lambda}}}](0,\tau)$ in Theorem \ref{Thomae1Z3} and the latter quotient yield the required $\Delta$-terms (for $\mathbf{\Lambda}$), the constants are as stated, and we denote the remaining root of unity by $\epsilon_{\mathbf{\Lambda}}$. This completes the proof of the theorem.
\end{proof}

\begin{cor}
The formula from Theorem \ref{Thomae2Z3t2} is valid also when $\infty\in\Lambda_{0}$. \label{inftyLambda0}
\end{cor}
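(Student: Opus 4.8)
The plan is to deduce the case $\infty\in\Lambda_0$ from the case $\infty\in\Lambda_1$ already settled in Theorem \ref{Thomae2Z3t2}. Choose a finite index $j\in\Lambda_1$ and let $\mathbf{\Lambda}'$ be obtained from $\mathbf{\Lambda}$ by interchanging $\infty$ and $j$; then $\infty\in\Lambda_1'$, $\Lambda_2'=\Lambda_2$, the partition $\mathbf{\Lambda}'$ is again of the second type, and $e_{\mathbf{\Lambda}'}=e_{\mathbf{\Lambda}}-u_{P_\infty}(P_j)$. Since $u_{P_\infty}(P_j)$ has order $3$, the two characteristics differ by $2u_{P_\infty}(P_j)$, so I would pass through the auxiliary characteristic $e_{\mathbf{\Lambda}''}=e_{\mathbf{\Lambda}}+u_{P_\infty}(P_j)$, corresponding to the partition with $\Lambda_2''=\Lambda_2\cup\{j\}$, $\Lambda_1''=\Lambda_1\setminus\{j\}$ and $\Lambda_0''=\Lambda_0$ (still containing $\infty$). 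This $\mathbf{\Lambda}''$ has weight $g$ and is non-special, so $\theta[e_{\mathbf{\Lambda}''}]\neq0$.

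First I would apply Proposition \ref{quotforZ3} with $k=j$ along two degenerating families of divisors, parametrised by $\zeta=\delta\varepsilon_s$ (with $\varepsilon_s$ the $s$-th standard basis vector). In the first family the $g$ points realise $e_{\mathbf{\Lambda}''}$, so that $P_j$ occurs as a double point while $P_\infty$ does not occur; this relates $\theta[e_{\mathbf{\Lambda}}]$ to the constant $\theta[e_{\mathbf{\Lambda}''}]$. In the second family the points realise $e_{\mathbf{\Lambda}'}$ with a second copy of $P_\infty$ adjoined, so that now $P_\infty$ is the double point and $P_j$ is absent; this relates $\theta[e_{\mathbf{\Lambda}''}]$ to $\theta[e_{\mathbf{\Lambda}'}]$. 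Multiplying the two identities cancels $\theta[e_{\mathbf{\Lambda}''}]$ and leaves a relation between $\theta[e_{\mathbf{\Lambda}}]$ and $\theta[e_{\mathbf{\Lambda}'}]$. Expanding in $\delta$ as in Lemma \ref{pardersmult}, the double point at $P_j$ makes the first product vanish like $\delta^{3}(\partial\beta/\partial\zeta_s)^{3}$, while the double point over $\infty$ makes the second product blow up like $\delta^{-3}(\partial\beta/\partial\zeta_s)^{-3}$; the powers of $\delta$ cancel, matching the simple zeros of $\theta[e_{\mathbf{\Lambda}}]$ and $\theta[e_{\mathbf{\Lambda}'}]$ at $\zeta=0$.

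The reason the answer keeps the \emph{same} symmetric function $\sigma_{3q-2-l}(\Lambda_2)$ is that, by Lemma \ref{pardersmult}, each derivative $\partial\beta/\partial\zeta_s$ involves the elementary symmetric functions of the \emph{remaining} double points, and these equal $\Lambda_2$ in both steps, since $\Lambda_2''\setminus\{j\}=\Lambda_2$ and $(\Lambda_2\cup\{\infty\})\setminus\{\infty\}=\Lambda_2$. Hence $\partial_s\beta$ at $P_j$ and $\partial_s\beta$ over $\infty$ are each equal to $\sum_{l}(-1)^{3q-2-l}\sigma_{3q-2-l}(\Lambda_2)C_{ls}$ times an $s$-independent scalar, their ratio is independent of $s$, and therefore $\partial_s\theta[e_{\mathbf{\Lambda}}]|_{0}$ is an $s$-independent multiple of $\partial_s\theta[e_{\mathbf{\Lambda}'}]|_{0}$. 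Substituting the value of the latter from Theorem \ref{Thomae2Z3t2} (where $\Lambda_2'=\Lambda_2$), it then remains to check that the finite Vandermonde factors coming from the two products convert the $\Delta$-terms of $\mathbf{\Lambda}'$ into those of $\mathbf{\Lambda}$.

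The main obstacle is the local analysis over $\infty$. I must establish the analogue of Lemma \ref{pardersmult} for a double point lying over $\infty$, whose local coordinate satisfies $t^{3}=1/z$ rather than $t^{3}=z-\lambda_i$; I expect the $\sigma_{3q-2-l}(\Lambda_2)$-dependence to persist (it sees only the finite double points), but the replacement for the normalising coefficient $\tfrac{2f'(\lambda_i)^{1/3}}{3F_{-}'(\lambda_i)}$ at the infinite place, together with the precise rate at which $\lambda_j-z(Q_r)$ diverges as the relevant point tends to $P_\infty$, has to be computed directly. Carrying the two $\epsilon$-factors and the cube-root branches through the cancellation so as to land on a single $36$th root of unity, and confirming that the surviving finite factors reproduce exactly the $\Delta$-terms attached to the partition with $\infty\in\Lambda_0$, is the delicate bookkeeping that remains.
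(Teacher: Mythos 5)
Your proposal does not prove the corollary as written, and it also misses the short argument that actually does the job. The genuine gap lies in your second degenerating family, which carries all the weight of your deduction: to relate $\theta[e_{\mathbf{\Lambda}''}]$ to $\partial_{s}\theta[e_{\mathbf{\Lambda}'}]\big|_{0}$ you need an analogue of Lemma \ref{pardersmult} for a double point lying over $\infty$, where the local coordinate satisfies $t^{3}=1/z$ and the basis differentials have completely different vanishing orders than at a finite branch point, together with the exact rate at which the two inverted points escape to $P_{\infty}$ as $\delta\to0$ and the conversion of the resulting constants into the $\Delta$-terms of $\mathbf{\Lambda}$. You explicitly defer all of these steps (``has to be computed directly'', ``the delicate bookkeeping that remains''), so what you have is a program, not a proof. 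Your guess that $\partial\beta/\partial\zeta_{s}$ at the infinite double point is again an $s$-independent multiple of $\sum_{l}(-1)^{3q-2-l}\sigma_{3q-2-l}(\Lambda_{2})C_{ls}$ happens to be correct---at $P_{\infty}$ only the second-kind basis differential of top degree in $z$ has a nonvanishing $t$-linear term, so the corresponding column of the matrix $\mathcal{A}$ is concentrated in a single row and inverting again produces the coefficients of $\prod_{i\in\Lambda_{2}}(x-\lambda_{i})$---but this computation is precisely the missing content, and asserting it does not discharge it.

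The paper's own proof needs no degeneration at all: interchanging $\Lambda_{0}$ and $\Lambda_{1}$ sends $e_{\mathbf{\Lambda}}$ to $-e_{\mathbf{\Lambda}}$, because by Equation \eqref{charLambda} the two characteristics sum to $\sum_{i}u_{P_{\infty}}(P_{i})+2K_{P_{\infty}}$ modulo third-order points, and this vanishes since $\mathrm{div}(w)=\sum_{i}P_{i}-(3q-1)P_{\infty}$ gives $\sum_{i}u_{P_{\infty}}(P_{i})=0$ while the canonical divisor $(2g-2)P_{\infty}$ gives $2K_{P_{\infty}}=0$. The swapped partition satisfies the hypotheses of Theorem \ref{Thomae2Z3t2} verbatim, the identity $\theta[-e](\zeta,\tau)=\theta[e](-\zeta,\tau)$ transfers the derivative formula up to a sign, and the right-hand side is invariant under the swap because $\Lambda_{2}$ is untouched and the $\Delta$-terms change only by signs absorbed into $\epsilon_{\mathbf{\Lambda}}$. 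Finally, note that even within your own framework the second family is superfluous: your auxiliary divisor $\sum_{i\in\Lambda_{1}''}P_{i}+2\sum_{i\in\Lambda_{2}''}P_{i}$ is supported entirely on finite branch points, so Lemma \ref{pardersmult} applies as stated, with $P_{j}$ as the distinguished double point and with the remaining double points being exactly $\Lambda_{2}$; moreover, since $u_{P_{\infty}}(P_{\infty})=0$, the characteristic $e_{\mathbf{\Lambda}''}$ coincides with that of the partition $\big(\Lambda_{0}\setminus\{\infty\},\,\Lambda_{1}\setminus\{j\},\,\Lambda_{2}\cup\{j,\infty\}\big)$, whose three parts all have cardinality $q$ with $\infty$ in the third, so $\theta[e_{\mathbf{\Lambda}''}](0,\tau)$ is supplied directly by Theorem \ref{Thomae1Z3}. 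Running the proof of Theorem \ref{Thomae2Z3t2} with this single family would therefore yield the corollary with no analysis at $P_{\infty}$ whatsoever.
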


\begin{proof}
It is easy to see that when interchanging $\Lambda_{0}$ and $\Lambda_{1}$ takes the characteristic $e_{\mathbf{\Lambda}}$ to $-e_{\mathbf{\Lambda}}$. The result thus follows from Theorem \ref{Thomae2Z3t2} and the evenness of theta functions with respect to inverting characteristics, since all the expressions in the formula are invariant under this interchange (up to modifying the root of unity $\epsilon_{\mathbf{\Lambda}}$). This proves the corollary.
\end{proof}
An inversion of the characteristics, as in the proof of Corollary \ref{inftyLambda0}, can be applied equally well to the formula from Theorem \ref{Thomae2Z3t1}. However, in this case we interchange $\Lambda_{1}$ with $\Lambda_{2}$, and the resulting divisors is already covered in Theorem \ref{Thomae2Z3t1} itself. In particular we see an interesting phenomenon: The upper $2q-1$ rows of the matrix $C$ appear in derivative formulae for divisors (or partitions) of the first type via Theorem \ref{Thomae2Z3t1}, while the lower $q-1$ rows of that matrix show up in the formulae associated with divisors (or partitions) of the second type, as in Theorem \ref{Thomae2Z3t2} and Corollary \ref{inftyLambda0}.

\smallskip

Once again we would like to express the formulae arising from the divisors that are ``near'' a non-special divisor $\Delta$ of degree $g$ in matrix form, as in Equation \eqref{matrixthomae2}. Given $\Delta$ and $\mathbf{\Lambda}$ as in Theorem \ref{Thomae1Z3}, the possible ways to obtain from $\Delta$ positive divisors as in Theorem \ref{divsforder} while affecting only one branch point (and $P_{\infty}$) is to subtract $P_{i}-P_{\infty}$ from $\Delta$ for $i\in\Lambda_{1}\cup\Lambda_{2}\setminus\{\infty\}$, or subtract $2P_{i}-2P_{\infty}$ for any $i\in\Lambda_{2}\setminus\{\infty\}$. Let us organize the index $k$ such that for $1 \leq k \leq q$ we define $\mathbf{\Lambda}^{(k)}$ to arise from an index $i\in\Lambda_{1}$, then when $q+1 \leq k\leq2q-1$ we consider $i\in\Lambda_{2}\setminus\{\infty\}$ with a simple subtraction, and if $2q \leq k\leq3q-2$ then we take $i\in\Lambda_{2}\setminus\{\infty\}$ but with a double subtraction. More explicitly we get \[\left\{\begin{array}{cccc} \Lambda_{2}^{(k)}=\Lambda_{2}\setminus\{\infty\} & \Lambda_{1}^{(k)}=\Lambda_{1}\setminus\{i\} & \Lambda_{0}^{(k)}=\Lambda_{0}\cup\{i,\infty\} & 1 \leq k \leq q \\ \Lambda_{2}^{(k)}=\Lambda_{2}\setminus\{i,\infty\} & \Lambda_{1}^{(k)}=\Lambda_{1}\cup\{i\} & \Lambda_{0}^{(k)}=\Lambda_{0}\cup\{\infty\} & q+1 \leq k\leq2q-1 \\ \Lambda_{2}^{(k)}=\Lambda_{2}\setminus\{i,\infty\} & \Lambda_{1}^{(k)}=\Lambda_{1}\cup\{\infty\} & \Lambda_{0}^{(k)}=\Lambda_{0}\cup\{i\} & 2q \leq k\leq3q-2=g.\end{array}\right.\] Then we set $\mathcal{D}$ to be the diagonal matrix with $kk$-entry \[\epsilon_{\mathbf{\Lambda}^{(k)}}\Delta(\Lambda_{0}^{(k)})^{1/2}\Delta(\Lambda_{1}^{(k)})^{1/2}\Delta(\Lambda_{2}^{(k)})^{1/2}
\Delta(\Lambda_{0}^{(k)},\Lambda_{1}^{(k)})^{1/6}\Delta(\Lambda_{1}^{(k)},\Lambda_{2}^{(k)})^{1/6}\Delta(\Lambda_{2}^{(k)},\Lambda_{0}^{(k)})^{1/6},\] and we define $\Sigma$ via \[\Sigma_{kl}=\left\{\begin{array}{cc} (-1)^{2q-1-l}\sigma_{2q-1-l}(\Lambda_{1}^{(k)}\cup\Lambda_{2}^{(k)}) & 1 \leq k \leq q\mathrm{\ and\ }1 \leq l\leq2q-1 \\ 2(-1)^{3q-2-l}\sigma_{3q-2-l}(\Lambda_{2}^{(k)}) & q+1 \leq k\leq3q-2\mathrm{\ and\ }2q \leq l\leq3q-2 \\ 0 &  \mathrm{otherwise}.\end{array}\right.\] With this notation we find, in analogy with Equation \eqref{matrixthomae2}, that
\begin{equation}
\frac{\partial\big(\theta[e_{\mathbf{\Lambda}^{(1)}}],\ldots,\theta[e_{\mathbf{\Lambda}^{(g)}}]\big)}{\partial(\zeta_{1},\ldots,\zeta_{g})}
\bigg|_{\zeta=0}=\frac{\alpha}{3}\cdot\mathcal{D}\Sigma C. \label{degenmat}
\end{equation}

As for Theorem \ref{Thomae2Z2}, the results of Theorems \ref{Thomae2Z3t1} and \ref{Thomae2Z3t2} are equally valid without the assumption on the location of $\infty$ in $\mathbf{\Lambda}$, with the $\Delta$ terms remaining unaffected, but where every expression of the sort $\sigma_{p}(\Lambda)$ (with $\Lambda$ being either $\Lambda_{1}\cup\Lambda_{2}$ in Theorem \ref{Thomae2Z3t1}, or just $\Lambda_{2}$ as in Theorem \ref{Thomae2Z3t2}) should be replaced by $\sigma_{p-1}(\Lambda\setminus\{\infty\})$. This is proved by taking $k\in\Lambda_{0}$ (for both types of partitions), comparing $\mathbf{\Lambda}$ with the partition $\widetilde{\mathbf{\Lambda}}$ in which $k$ and $\infty$ are interchanged, and noting that an appropriate choice of points $\{Q_{r}\}_{r=1}^{g}$ in Proposition \ref{quotforZ3} would give us (up to some extra root of unity) the quotient $\frac{\theta[e_{\mathbf{\Lambda}}](\zeta,\tau)}{\theta[e_{\widetilde{\mathbf{\Lambda}}}](\zeta,\tau)}$ with some small $\zeta$, which we take to be $\delta$ times the $s$-th standard vector, with $\delta$ small. Expanding everything in terms of $\delta$, and noticing that the two theta constants vanish (at $\delta=0$), we get a explicit expression for a quotient involving our desired theta derivative and a theta derivative that is already evaluated via Theorem \ref{Thomae2Z3t1} or \ref{Thomae2Z3t2}. An analogue of Corollary \ref{inftyLambda0} then completes the proof for a few remaining partitions.

As a final remark we state that when the polynomial $f$ from Equation \eqref{Z3curve} has degree $3q$ (instead of $3q-1$), and we choose the base point $P_{0}$ to be with a finite $z$-value $\lambda_{0}$, the results of Theorems \ref{Thomae2Z3t1} and \ref{Thomae2Z3t2} etc. continue to hold, but with the proofs involving a few additional terms.

\bigskip

On behalf of all authors, the corresponding author states that there is no conflict of interest.

\noindent\textsc{National University of Kyiv-Mohyla Academy, H. Skovorody St. 2, 04655 Kyiv, Ukraine \\ Finance Department School of Business 2100, Hillside University of Connecticut, Storrs, CT 06268 \\ Einstein Institute of Mathematics, the Hebrew University of Jerusalem, Edmund Safra Campus, Jerusalem 91904, Israel}

\noindent E-mail: venolski@googlemail.com, yaacov.kopeliovich@uconn.edu, zemels@math.huji.ac.il

\end{document}